\newtheorem{theorem}{Theorem}[section]
\newtheorem{thm}{Theorem}[section]
\newtheorem{rem}[thm]{Remark}
\newtheorem{lem}[thm]{Lemma}
\newtheorem{prop}[thm]{Proposition}
\newtheorem{defn}{Definition}[section]
\numberwithin{equation}{section}
\newcommand{\al}{\alpha}
\newcommand{\ld}{\lambda}
\newcommand{\de}{\delta}
\newcommand{\De}{\Delta}
\newcommand{\ep}{\varepsilon}
\newcommand{\om}{\omega}
\newcommand{\Om}{\Omega}
\newcommand{\ga}{\gamma}
\renewcommand{\th}{\theta}
\renewcommand{\P}{\mathcal{P}}
\newcommand{\g}{\mathfrak{g}}
\newcommand{\J}{\mathfrak{J}}
\renewcommand{\S}{\mathbb{S}}
\newcommand{\Real}{\mathbb{R}}
\newcommand{\norm}[1]{\Vert#1\Vert}
\def\<{\left\langle} \def\>{\right\rangle}
\def\({\left(} \def\){\right)}
\newcommand{\n}{\nabla}
\newcommand{\p}{\partial}
\begin{document}
\title{very regular solution to Landau-Lifshitz system with spin-polarized transport }
\author{Bo Chen}
\address{1. Department of Mathematical Sciences, Tsinghua University, Beijing, 100084, China; 2. Academy of Mathematics and systems science, Chinese Academy of Sciences, Beijing 100190, China; 3. School of Mathematical Sciences, University of Chinese Academy of Sciences, Beijing 100049, China.}
\email{chenbo@mail.tsinghua.edu.cn}
\author{Youde  Wang}
\address{1. School of Mathematics and Information Sciences, Guangzhou University; 2. Hua Loo-Keng Key Laboratory of Mathematics, Institute of Mathematics, Academy of Mathematics and Systems Science, Chinese Academy of Sciences, Beijing 100190, China; 3. School of Mathematical Sciences, University of Chinese Academy of Sciences, Beijing 100049, China.}
\email{wyd@math.ac.cn}
 \keywords{Landau-Lifshitz system with spin-polarized transport, Very regular solution, The compatibility conditions of the initial data, Galerkin approximation method, Auxiliary approximation equation}
%\date{\today}
\begin{abstract}
In this paper, we provide a precise description of the compatibility conditions for the initial data so that one can show the existence and uniqueness of regular short-time solution to the Neumann initial-boundary problem of a class of Landau-Lifshitz-Gilbert system with spin-polarized transport, which is a strong nonlinear coupled parabolic system with non-local energy.
\end{abstract}
\maketitle
%\tableofcontents
%\newpage

\section{Introduction}\label{s: intro}
\subsection{Background}
In physics, the Landau-Lifshtiz (LL) equation is a fundamental evolution equation for the ferromagnetic spin chain and was proposed on the phenomenological background in studying the dispersive theory of magnetization of ferromagnets. It was first deduced by Landau and Lifshitz in \cite{LL}, and then proposed by Gilbert in \cite{G} with dissipation. In fact, this equation describes the Hamiltonian dynamics corresponding to the Landau-Lifshitz energy, which is defined as follows.

Let $\Omega$ be a smooth bounded domain in the Euclidean space $\mathbb{R}^3$, whose coordinates are denoted by $x=(x^1,x^2,x^3)$. We assume that a ferromagnetic material occupies the domain $\Omega\subset\mathbb{R}^3$. Let $u$, denoting magnetization vector, be a mapping from $\Omega$ into a unit sphere $\S^2\subset\mathbb{R}^3$. The Landau-Lifshitz energy of map $u$ is defined by
$$\mathcal{E}(u):=\int_{\Om}\Phi(u)\,dx+\frac{1}{2}\int_{\Om}|\n u|^2\,dx-\frac{1}{2}\int_{\Omega}h_d\cdot u\,dx.$$
Here $\n $ denote the gradient operator and $dx$ is the volume element of $\mathbb{R}^3$.

In the above Landau-Lifshitz functional, the first and second terms are the anisotropy and exchange energies, respectively. $\Phi(u)$ is a real function on $\S^2$. The last term is the self-induced energy, and $h_d = -\nabla w$ is the demagnetizing field. In fact, the magnetostatic potential $w$ has precise formula 
$$w(x) = \int_{\Omega} \nabla N(x-y)u(y)dy,$$
where $N(x) = -\frac{1}{4\pi|x|}$ is the Newtonian potential in $\mathbb{R}^3$.

The LL equation with dissipation can be written as
\[
u_t - \alpha u\times u_t =-u\times h,\]
where ``$\times$" denotes the cross production in $\Real^{3}$ and the local field $h$ of $\mathcal{E}(u)$ can be derived as
$$h:=-\frac{\delta\mathcal{E}(u)}{\delta u}= \Delta u + h_d -\nabla_u\Phi.$$
Here, the constant $\alpha$ is the damping parameter, which is characteristic of the material, and is usually called the Gilbert damping coefficient.  Hence the Landau-Lifshitz equation with damping term is also called the Landau-Lifshitz-Gilbert (LLG) equation in the literature.

On the other hand, another new physical model for the spin-magnetization system, which takes into account the diffusion process of the spin accumulation through the multilayer, has attracted considerable attention of many mathematicians. Especially,  it was originally presented by Zhang et al. \cite{ZF, SLZ}, and later be extended by \cite{GW} to three dimensions of the model for spin-polarized transport. In this paper, we call the model as the Landau-Lifschitz-Gilbert equation with spin-polarized transport (LLGSP), which is given in below
\begin{equation}\label{llgsp}
\begin{cases}
\p_t u-\al u\times \p_t u=-u\times(h+s),\quad\quad&\text{(x,t)}\in\Om\times \Real^+,\\[1ex]
\p_t s=-\mbox{div} J_s-D_0(x)\cdot s-D_0(x)\cdot s\times u, \quad\quad&\text{(x,t)}\in\Om_0\times \Real^+,
\end{cases}
\end{equation}
with initial-boundary condition
\begin{equation}\label{in-bd}
\begin{cases}
u(\cdot,0)=u_0:\Om\to \S^{2},\,\,\frac{\p u}{\p \nu}|_{\p \Om}=0,\\[1ex]
s(\cdot,0)=s_0:\Om_0\to \Real^3,\,\,\frac{\p s}{\p \nu}|_{\p \Om_0}=0.
\end{cases}
\end{equation}

Here, $\Om_0\subset\Real^3$ is a bounded domain and $\Om\subset\Om_0$, $u(x,t)\in\S^2$ is the magnetized field, $s(x,t)\in \Real^{3}$ is the spin accumulation, $D_0(x)$ is  a positive measurable function, which represents the diffusion coefficient of the spin accumulation, $\al>0$ is the Gilbert damping parameter, and $J_s$ is the spin current given by
$$J_s=u\otimes J_e-D_0\cdot\{\n s-\th u\otimes(\n s\cdot u)\}=u\otimes J_e+A(u)\n s,$$
where $J_e$ is the applied electric current, $\th\in(0,1)$ is the spin polarization parameter and the coefficient matrix $A(u)$ is expressed as 
\[A(u)=-D_0
\begin{pmatrix}
	1-\th u^2_1&-\th u_1u_2&-\th u_1u_3\\
	-\th u_2u_1&1-\th u^2_2&-\th u_2u_3\\
	-\th u_3u_1&-\th u_3u_2&1-\th u^2_3\\
\end{pmatrix}
.\]
 The spin accumulation $s$ is defined on $\Om_0$ and the magnetization $u$ is defined on the magnetic domain $\Om$ and extended as zero outside.  The additional term in the LLG equation is induced by the interaction $F[u,s]=-\int_{\Om}u\cdot s\,dx$.

However, we are concerned in this paper with the existence of regular solution to system \eqref{llgsp}, so it is natural to assume that $\Om=\Om_0$ and the boundary of $\Om$ is smooth. For simplicity, we also assume that $\Phi$ is a smooth function on $\S^2$.

First, we note a fact that for $u:\Omega\times\mathbb{R}^+\to \S^2$, the first equation of system \eqref{llgsp} is equivalent to
$$\p_t u=\frac{\al}{\al^2+1}\(\De u+|\n u|^2u-u\times (u\times(\tilde{h}+s))\)-\frac{1}{1+\al^2}u\times(h+s),$$
where
$$\tilde{h}=h_d(u) -\nabla_u\Phi.$$

Without loss of generality, we consider the following equivalent system
\begin{equation}\label{llgsp1}
\begin{cases}
\p_t u=\al\(\De u+|\n u|^2u-u\times (u\times(\tilde{h}+s))\)-u\times(h+s),\quad\quad&\text{(x,t)}\in\Om\times \Real^+,\\[1ex]
\p_t s=-\mbox{div}(A(u)\n s+u\otimes J_e)-D_0(x)\cdot s-D_0(x)\cdot s\times u, \quad\quad&\text{(x,t)}\in\Om\times \Real^+,
\end{cases}
\end{equation}
with the initial-boundary condition \eqref{in-bd} and $\al>0$.

\subsection{Related Work}
The LL equation is an important topic in both mathematics and physics, not only because it is closely related to the famous heat flow of harmonic maps and to the Schr\"{o}dinger flow on the sphere, but also because it has concrete physics background in the study of the magnetization in ferromagnets. In recent years, there has been trementdous interest in developing the well-posedness of LL equation and its related topics\cite{CJ,C-W,DWa,GW, JW1, JW2, M, PW, T, WC}. Here, we list only a few of results that are closely related to our work in the present paper.

First we recall some results on the weak solutions to the LL equation. The existence of weak solutions to LLG equation, also in the presence of magnetostrictive effects, was established by Visintin \cite{V} in 1985. P.L. Sulem, C. Sulem and C. Bardos in \cite{SSB} employed difference method to prove that the LL equation without dissipation term (that is Schr\"{o}dinger flow for maps into $\S^2$) defined on $\Real^n$ admits a global weak solution and a smooth local solution. For a bounded domain $\Om\subset\Real^3$, Alouges and Soyeur \cite{AS} showed the nonuniqueness of weak solutions to LLG equation.

Later, Y.D. Wang \cite{W} obtained the existence of weak solution to Schr\"odinger flow for maps from a closed Riemannian manifold into a two dimensional sphere by adopting a trick approximation equation, Galerkin method and then choosing suitable test functions to derive a priori estimates of $L^\infty$ on the approximate solutions. Tilioua \cite{T} (also see \cite{B}) also used the penalized method to show the global weak solution to the LLG equation with spin-polarized current. Recently, Z.L. Jia and Y.D. Wang \cite{JW1} (also see \cite{C-W}) employed a method originated from \cite{W}  to achieve the global weak solutions to a large class of LL flows in more general setting, where the base manifold is a bounded domain $\Om\subset\Real^n$($n\geq 3$) or Riemannian manifold $M^n$ and the target space is $\S^2$ or the unit sphere $\S^n_\g$ in a compact Lie algebra $\g$.

The first global well-posedness result for LL equation on $\mathbb{R}^n$ in critical spaces (precisely, global well-posedness for small data in the critical Besov spaces in dimensions $n\geqslant3$) was proved by Ionescu and Kenig in \cite{IK}, and independently by Bejanaru in \cite{B1}. This was later improved to global regularity for small data in the critical Sobolev spaces in dimensions $n\geqslant4$ in \cite{BIK}. Finally, in \cite{BIKT} the global well-posedness result for LL equation for small data in the critical Besov spaces in dimensions $n\geqslant2$ was addressed. In $\Real^n$ with $n\geq 3$, Melcher \cite{M} proved the existence, uniqueness and asymptotics of global smooth solutions for the LLG equation, valid under a smallness condition of initial gradients in the $L^n$ norm. His argument is based on deriving a covariant complex Ginzburg-Landau equation and using the Coulomb gauge.

Next, we retrospect some of the work related to local regular solutions of LL equation on bounded domains or compact manifolds. For the case LL equation without Gilbert damping term, one has shown the existence of local smooth solutions, we refer to \cite{DW1, DW*, SSB, PWW1, PWW2, ZGT}. For the case $\Om$ is a bounded domain in $\Real^3$, Carbou and Fabrie studied a nonlinear dissipative LL equation (i.e. $\al>0$) coupled with Maxwell equations in micromagnetism theory, and they proved the local existence and uniqueness of regular solutions for a so-called quasistatic model in \cite{CF}. Moreover, they showed global existence of regular solutions for small data in the 2D case for the LL equation (also see \cite{GH}). Recently, the local existence of very regular solution to LLG equation with electric current was addressed by applying the delicate Galerkin approximation method and adding compatibility initial-boundary condition in \cite{CJ}.

For the spin-magnetization system \eqref{llgsp} that takes into account the diffusion process of the accumulation, Garcia-Cervera and Wang \cite{GW} adopted Galerkin approximation and projection method to obtain the existence of weak solution. By using more refined harmonic analysis, X.K. Pu and W.D. Wang established the global existence and uniqueness of weak solutions to the simplified system \eqref{llgsp} from $\Real^2$ into $\S^2$ for large initial data in their paper \cite{PW}, where the partial regularity was shown. Recently, Z.L. Jia and Y.D. Wang \cite{JW2} employed a suitable auxiliary approximation equation and then take the Galerkin approximation method for the auxiliary equation as in \cite{W} to get the global weak solution of \eqref{llgsp}-\eqref{in-bd}. In particular, they also get the existence of weak solution to \eqref{llgsp}-\eqref{in-bd} without damping term (i.e., $\alpha=0$, the coupling system of Schr\"odinger flow and diffusion equation). It seems that there is few result on its regular solutions to this coupling system in the literature.

\subsection{Main results and Strategy} Inspired by the method used in \cite{CJ}, we show the locally very regular solution of LLGSP when the basis domain $\Om$ is a smooth bounded in $\Real^3$. Our main result can be stated as following theorems.

\begin{thm}\label{mth1}
Let $u_0\in H^{2}(\Om,\S^2)$ and $s_0\in H^{2}(\Om,\Real^3)$ satisfy the compatibility condition:
\begin{equation*}
\begin{cases}
\frac{\p u_0}{\p \nu}|_{\p \Om}=0,\\[1ex]
\frac{\p s_0}{\p \nu}|_{\p \Om}=0.
\end{cases}
\end{equation*}
 Suppose that $D_0\in C^2(\bar{\Om})$ and $D_0(x)\geq c_0>0$ for some constant $c_0$, $0<\theta<1$ and $J_e\in L^\infty(\Real^+,H^2(\Om))$. Then there exists $T^*>0$ depending only on the $H^2$-norm of $(u_0,s_0)$ such that \eqref{llgsp1}-\eqref{in-bd} admits a unique local solution $(u,s)$ for any $T<T^*$, which satisfies
\begin{enumerate}
	\item $|u|(x,t)=1$ in $[0,T]\times \Om$,
	\item  $(u,s)\in L^\infty([0,T], H^2(\Om,\Real^3))\cap L^2([0,T], H^3(\Om,\Real^3))$.
\end{enumerate}
\end{thm}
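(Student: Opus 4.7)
The plan is to construct the local regular solution via a two-stage approximation scheme, in the spirit of \cite{CJ} for the LLG equation with electric current and \cite{JW2} for the weak-solution theory of \eqref{llgsp}-\eqref{in-bd}. First I would introduce an auxiliary system that relaxes the sphere constraint (either a Ginzburg--Landau-type penalization, or the equivalent modification in which the term $|\nabla u|^2 u$ is replaced by a penalty that is consistent with $|u|=1$) so that the unknown $u$ is a general $\mathbb{R}^3$-valued map. For this auxiliary system I would then apply a Galerkin approximation in the first $m$ eigenfunctions of the Neumann Laplacian on $\Omega$; since each eigenfunction satisfies $\partial\phi_k/\partial\nu=0$ on $\partial\Omega$, every finite-dimensional projection automatically respects the boundary condition, and the resulting system of ODEs admits a local-in-time solution by Picard--Lindel\"of.

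The heart of the argument is to derive \emph{a priori} estimates uniform in both the Galerkin parameter $m$ and the auxiliary parameter, on a short interval $[0,T^*]$ whose length depends only on $\|(u_0,s_0)\|_{H^2}$. I would proceed by ascending order: first an $L^\infty_t L^2_x$ bound from a modified energy, then an $L^\infty_tH^1_x\cap L^2_tH^2_x$ bound by testing the two equations with $-\Delta u$ and $-\Delta s$ respectively (the parabolic coercivity coming from the $\alpha\Delta u$ term and from $-\mathrm{div}(A(u)\nabla s)$, noting that the assumption $D_0\geq c_0>0$ together with $0<\theta<1$ makes $A(u)$ uniformly elliptic on the unit sphere), and finally an $L^\infty_tH^2_x\cap L^2_tH^3_x$ bound by differentiating the equations in time, testing with $\partial_t u$ and $\partial_t s$, and then recovering the third spatial derivative via elliptic regularity applied to the principal elliptic parts of the two equations. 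The compatibility condition $\partial u_0/\partial\nu|_{\partial\Omega}=\partial s_0/\partial\nu|_{\partial\Omega}=0$ together with $u_0,s_0\in H^2$ is exactly what is needed so that, reading off from the equations at $t=0$, we find $\partial_t u(\cdot,0),\partial_t s(\cdot,0)\in L^2(\Omega)$ with norms controlled by $\|(u_0,s_0)\|_{H^2}$, supplying the initial data for the time-differentiated estimate and avoiding spurious boundary terms when integrating by parts. The nonlocal demagnetizing field $h_d=-\nabla w$ gains a derivative through the Newton potential representation, so it contributes only lower-order terms throughout.

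With the uniform bounds at hand I would pass to the limit $m\to\infty$ using weak compactness plus Aubin--Lions: strong convergence in $C([0,T^*];L^2(\Omega))$ suffices to pass to the limit in the quadratic nonlinearities such as $u\times s$, $|\nabla u|^2 u$, $u\times(u\times s)$, and $A(u)\nabla s$. Letting the auxiliary parameter tend to zero gives a solution of the original system. The sphere constraint $|u|=1$ is recovered by multiplying the first equation of \eqref{llgsp1} by $u$, deriving a scalar parabolic equation satisfied by $|u|^2-1$ with zero initial data and Neumann boundary condition, and concluding $|u|\equiv 1$ by uniqueness for that scalar problem. For uniqueness of $(u,s)$, given two solutions with the same initial data, the differences satisfy a linear parabolic system whose coefficients lie in $L^\infty_{t,x}$ via the embedding $H^2(\Omega)\hookrightarrow L^\infty(\Omega)$ in three dimensions; a standard energy estimate together with Gr\"onwall's inequality forces the differences to vanish.

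The hardest step will be the top-order $H^2$ energy estimate. In three dimensions $H^2\hookrightarrow L^\infty$ is barely critical, so the cross-terms produced by the strong coupling (notably those coming from differentiating $\mathrm{div}(A(u)\nabla s)$, $u\times(u\times s)$, and $|\nabla u|^2 u$ twice in space or once in time) have to be split carefully into a principal part absorbed by the $L^2_tH^3_x$ parabolic regularity of the leading elliptic operators, and a remainder estimated by a polynomial in $\|(u,s)\|_{H^2}$ that is then controlled by a continuity-in-time argument to yield a short existence time. Managing this bookkeeping while using the Neumann compatibility condition to eliminate boundary contributions at each integration by parts is the central technical difficulty.
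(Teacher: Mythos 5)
Your overall architecture (relax the constraint, Neumann--eigenfunction Galerkin, a priori estimates closing at the $H^2$ level via a polynomial differential inequality, Aubin--Lions compactness, the scalar parabolic equation for $|u|^2-1$, and an energy/Gr\"onwall uniqueness argument) matches the paper's. The one genuinely different choice is the top-order estimate: you propose to differentiate the system in time, test with $(\p_t u,\p_t s)$, and recover $H^3$ spatial regularity by elliptic bootstrap (the Carbou--Fabrie route), whereas the paper tests the Galerkin system directly with $(\De^2 u^n,\De^2 s^n)$ --- legitimate because Neumann eigenfunctions satisfy $\p(\De f_i)/\p\nu=0$ --- and obtains $L^\infty_tH^2\cap L^2_tH^3$ in one shot, reserving the time-differentiation trick for the higher regularity of Theorem \ref{mth2}. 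Both routes work; the paper's avoids having to justify $\p_t u^n(0),\p_t s^n(0)\in L^2$ at this stage (though, as you note, $H^2$ data does supply that).

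Two points need repair. First, and most seriously: you invoke uniform ellipticity of $A(u)$ ``on the unit sphere,'' but the Galerkin iterates $u^m$ do not lie on the sphere, and $-\xi^TA(u^m)\xi\geq(1-\th|u^m|^2)D_0|\xi|^2$ degenerates (indeed changes sign) once $|u^m|^2>1/\th$. A Ginzburg--Landau penalization of $u$ does not cure this, since it gives no pointwise bound on $|u^m|$ before the limit. The paper's device is to replace $A(u)$ by $A(\J(u))$ with $\J(u)=\sqrt{1+\de}\,u/\sqrt{\de+|u|^2}$, so that $|\J(u)|^2\leq 1+\de$ and, choosing $\de<\th^{-1}-1$, the coercivity constant $1-\th(1+\de)>0$ is uniform in $m$ and $t$; the modification is consistent with the original system once $|u|\equiv1$ is recovered. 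Without some such truncation your $s$-estimates do not close. Second, your intermediate ``$H^1$ stage'' cannot be closed on its own in three dimensions: the term $\int|\n u^m|^2|u^m||\De u^m|\,dx$ requires $\|u^m\|_{L^\infty}$, which $H^1(\Om)$ does not control (and $|u^m|=1$ is unavailable at the Galerkin level). This is not fatal --- one simply runs the $L^2$ and top-order estimates together as a single differential inequality $y'\leq C(1+y)^5$ for $y=\|(u^m,s^m)\|_{H^2}^2$ and applies the ODE comparison lemma to get a common $T^*$, which is what the paper does and what your closing remarks gesture at --- but the claim of a separately closed ascending ladder should be dropped.
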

Without additional compatibility condition, we can get a more regular solution to \eqref{llgsp1} when $(u_0,s_0)\in H^{3}(\Om,\Real^3)$.
\begin{thm}\label{mth2}
Let $u_0\in H^{3}(\Om,\S^2)$ and $s_0\in H^{3}(\Om,\Real^3)$ satisfy the same compatibility condition as in Theorem \ref{mth1}. Suppose that $D_0\in C^3(\bar{\Om})$ and $D_0\geq c_0>0$, $0<\theta<1$, $J_e\in C^0(\Real^+,H^2(\Om))$ and $\p_t J_e\in L^2(\Real^+,H^1(\Om))$. If $(u,s)$ and  $T^*$ are respectively the solution and the existence time given in Theorem \ref{mth1}, then, for any $T<T^*$, there holds
$$(u,s)\in L^\infty([0,T], H^3(\Om,\Real^3))\cap L^2([0,T], H^4(\Om,\Real^3)).$$
\end{thm}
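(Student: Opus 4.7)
The plan is to take the $L^\infty H^2\cap L^2 H^3$ solution $(u,s)$ on $[0,T]$ furnished by Theorem~\ref{mth1}, and perform one additional layer of energy estimates to lift its spatial regularity. The device is to time-differentiate the system, estimate $v:=\p_t u$ and $r:=\p_t s$ at the $L^\infty H^1\cap L^2 H^2$ level, and then bootstrap via elliptic regularity on the time-frozen version of the original PDE. The main technical obstacle I expect will be the control of the quasilinear correction $\mbox{div}(A'(u)\,v\,\n s)$ that emerges in the $s$-equation after differentiating in $t$.

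\textbf{Time-differentiated system and initial profiles.} Setting $v=\p_t u$, $r=\p_t s$ and differentiating \eqref{llgsp1} in $t$ yields
\begin{align*}
\p_t v &= \al\De v + \F_1(u,\n u,s,v,r),\\
\p_t r &= \mbox{div}(A(u)\n r) + \mbox{div}(A'(u)\,v\,\n s) - \mbox{div}(v\otimes J_e + u\otimes \p_t J_e) - D_0 r - D_0(r\times u + s\times v),
\end{align*}
with inherited Neumann boundary conditions $\p_\nu v=\p_\nu r=0$. The initial profiles $v(0),r(0)$ are given explicitly by the equations at $t=0$; the $H^3$-regularity of $(u_0,s_0)$, together with the 3D embeddings $H^2(\Om)\hookrightarrow L^\infty(\Om)\cap W^{1,4}(\Om)$ and the $C^3$-regularity of $D_0$, yields $v(0),r(0)\in H^1(\Om)$. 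Moreover, $A(u)$ is uniformly elliptic under the hypotheses $D_0\geq c_0>0$, $0<\th<1$, and $|u|=1$.

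\textbf{Energy estimates for $(v,r)$.} I carry out two successive energy estimates for the parabolic system above. Testing with $(v,r)$ and applying a Gronwall argument gives $v,r\in L^\infty([0,T];L^2)\cap L^2([0,T];H^1)$; then testing with $(-\De v,-\De r)$, exploiting the Neumann conditions, upgrades this to $v,r\in L^\infty([0,T];H^1)\cap L^2([0,T];H^2)$. The nonlinear coupling in $\F_1$ (containing the terms $|\n u|^2 v$, the cross-products with $h$, and the nonlocal demagnetizing field) and the source $\mbox{div}(u\otimes\p_t J_e)$ are absorbed using the $L^\infty H^2\cap L^2 H^3$ bounds from Theorem~\ref{mth1}, standard Sobolev products in 3D, and the hypothesis $\p_t J_e\in L^2(\Real^+,H^1)$. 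The delicate piece is $\int\mbox{div}(A'(u)\,v\,\n s)\cdot\De r\,dx$: after an integration by parts, its critical contribution is bounded by $\|v\|_{L^6}\|\n^2 s\|_{L^3}\|\n^2 r\|_{L^2}$ plus lower-order variants, Young's inequality absorbs the highest-order factor into the diffusive dissipation, and the remaining factor is time-integrable thanks to $s\in L^2 H^3$.

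\textbf{Elliptic bootstrap.} With $(v,r)\in L^\infty H^1\cap L^2 H^2$ in hand, read the original equations as elliptic equations in $x$ at a.e.\ time,
\begin{align*}
\al\De u &= v -\al\bigl(|\n u|^2 u - u\times(u\times(\tilde h+s))\bigr) + u\times(h+s),\\
-\mbox{div}(A(u)\n s) &= r + \mbox{div}(u\otimes J_e) + D_0 s + D_0 s\times u,
\end{align*}
both with Neumann data. Their right-hand sides now belong to $L^\infty H^1\cap L^2 H^2$, the nonlocal $h_d(u)$ gaining one derivative from the Newton-potential smoothing. Standard Neumann elliptic regularity first yields $u\in L^\infty H^3\cap L^2 H^4$; feeding this back into $A(u)$, which via $H^3\hookrightarrow C^{1,1/2}$ in 3D is then of class $W^{1,\infty}$, and using $D_0\in C^3(\bar\Om)$, one more application of elliptic regularity gives $s\in L^\infty H^3\cap L^2 H^4$. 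Uniqueness from Theorem~\ref{mth1} guarantees that this enhanced-regularity object is the original $(u,s)$, completing the proof.
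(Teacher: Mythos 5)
Your overall architecture (time-differentiate the system, run $L^2$ and then $H^1$ energy estimates on $(\p_t u,\p_t s)$, and finish with an elliptic bootstrap through the Neumann problem) is exactly the paper's, and your elliptic bootstrap step matches the paper's almost line for line. The gap is in the justification of the energy estimates. You differentiate the PDE itself and then test the resulting system for $v=\p_t u$, $r=\p_t s$ with $(-\De v,-\De r)$. But Theorem \ref{mth1} only gives $(\p_t u,\p_t s)\in L^\infty([0,T],L^2)\cap L^2([0,T],H^1)$; at that regularity the quantities $\De v$, $\De r$ are not known to exist, the time-differentiated equation holds at best distributionally, and none of the integrations by parts (nor the use of $\p_\nu v=\p_\nu r=0$ on the boundary) is licensed. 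A formal a priori estimate on an object whose regularity is exactly what you are trying to prove is not a proof; you need an approximation scheme in which the manipulations are legitimate and the bounds are uniform.

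The paper supplies this by differentiating the \emph{Galerkin} system \eqref{app-llgsp} in time: $(u^n_t,s^n_t)$ lives in the finite-dimensional space $H_n$ spanned by smooth Neumann eigenfunctions, so testing with $(-\De u^n_t,-\De s^n_t)$ is rigorous, and the uniform bounds pass to the limit. This shift is not cosmetic, because it forces you to control the approximate initial data $\norm{\n u^n_t(0)}_{L^2}+\norm{\n s^n_t(0)}_{L^2}$, which is expressed through $P_n$ applied to nonlinear expressions in $u^n_0=P_n u_0$, $s^n_0=P_n s_0$. The bound $\norm{u^n_0}_{H^3}\le C\norm{u_0}_{H^3}$ rests on the uniform $H^3$-boundedness of the Galerkin projection for functions with vanishing Neumann trace (Lemma \ref{es-P_n}(3)); this is the genuinely delicate ingredient your proposal has no substitute for, and the paper's subsequent remark (the analogous $H^4$ bound for $P_n$ fails) shows it is not a formality. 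Finally, your closing appeal to uniqueness is either redundant (if you are estimating $\p_t u$ of the given solution, there is nothing to identify) or insufficient (if you intend to solve a fresh initial-boundary value problem for $(v,r)$ with data $(V_1,W_1)$, you would need the first-order compatibility condition $\p_\nu V_1=\p_\nu W_1=0$ on $\p\Om$, which Theorem \ref{mth2} deliberately does not assume). The estimates themselves, including your treatment of the quasilinear term $\mbox{div}(\p_tA(u)\n s)$, are consistent with the paper's; the missing piece is the approximation framework that makes them rigorous.
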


In general, we can show the very regular solution to \eqref{llgsp1} under adding high order compatibility conditions.
\begin{thm}\label{mth3}
Let $k\geq 4$, $u_0\in H^{k}(\Om,\S^2)$ and $s_0\in H^{k}(\Om,\Real^3)$ satisfy the compatibility condition at $[\frac{k}{2}]-1$ order, which is given in the definition \ref{def-comp}. Let $(u,s)$ and $T^*>0$ be the same as in Theorem \ref{mth1}. In addition, we assume that $D_0\in C^{k}(\bar{\Om})$, $D_0\geq c_0>0$, and for any $i\leq k-[\frac{k}{2}]-1$ there holds
$$\p^i_tJ_e\in C^0(\Real^+,H^{2[\frac{k}{2}]-2i}(\Om,\Real^3))\cap L^2(\Real^+,H^{2[\frac{k}{2}]-2i+1}(\Om,\Real^3)).$$
Then, for any $T<T^*$, we have
$$(u,s)\in L^\infty([0,T], H^k(\Om,\Real^3))\cap L^2([0,T], H^{k+1}(\Om,\Real^3)).$$
\end{thm}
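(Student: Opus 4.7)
The strategy is induction on $k$, taking Theorems \ref{mth1} and \ref{mth2} as the base cases $k=2,3$ and exploiting the standard parabolic trade-off by which one time derivative is equivalent to two spatial derivatives. Set $m=[k/2]$. Granting $H^{k-1}$ regularity of $(u,s)$ by the inductive hypothesis, the goal is to upgrade it to $H^k$ in $L^\infty_t$ and to $H^{k+1}$ in $L^2_t$. The plan is to first produce $L^\infty_t H^{k-2m}_x\cap L^2_t H^{k-2m+1}_x$ bounds on the $m$-th time derivative of $(u,s)$ via energy estimates, and then to invoke elliptic regularity in the spatial variable to recover the full spatial regularity.

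First I would write down explicitly the compatibility conditions. Differentiating \eqref{llgsp1} in $t$ and evaluating at $t=0$, one recursively expresses $\p_t^j u|_{t=0}$ and $\p_t^j s|_{t=0}$ as nonlinear differential polynomials in $(u_0,s_0,J_e|_{t=0},\ldots,\p_t^{j-1}J_e|_{t=0})$ and $D_0$. The compatibility condition at order $j=[\frac{k}{2}]-1$ is precisely $\p_\nu(\p_t^j u)|_{t=0,\p\Om}=0$ and $\p_\nu(\p_t^j s)|_{t=0,\p\Om}=0$ for $0\leq j\leq [\frac{k}{2}]-1$; the assumption $\p_t^iJ_e\in C^0H^{2m-2i}\cap L^2H^{2m-2i+1}$ is designed so that these traces make sense and the forthcoming energy identities close.

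Next I would carry out the time-differentiated energy estimates. Apply $\p_t^m$ to \eqref{llgsp1} to obtain a coupled linear system for $(v,w):=(\p_t^m u,\p_t^m s)$ of parabolic type, with principal parts $\al\De v$ and $-\mathrm{div}(A(u)\nabla w)$ and with lower-order terms that are controlled, through Moser product estimates, by quantities $\p_t^i(u,s)$ for $i<m$ already bounded by induction (combined with the hypothesis on $J_e$). Testing with $v$ and $w$, and then with $-\De v$ and $-\De w$, integrating by parts using the now-available Neumann conditions on $v$ and $w$, and absorbing cross-terms by the uniform ellipticity of $A(u)$ (which follows from $D_0\geq c_0>0$ and $0<\th<1$), one obtains
\begin{equation*}
(\p_t^m u,\p_t^m s)\in L^\infty([0,T],H^{k-2m}(\Om,\Real^3))\cap L^2([0,T],H^{k-2m+1}(\Om,\Real^3)).
\end{equation*}
Once this is in hand, one rewrites \eqref{llgsp1} as an elliptic system in $x$ with forcing, solving $\al\De u = \p_t u - \al|\n u|^2 u + \al u\times(u\times(\tilde h+s)) + u\times(h+s)$ and $-\mathrm{div}(A(u)\nabla s) = \p_t s + \mathrm{div}(u\otimes J_e) + D_0 s + D_0 s\times u$ under Neumann data. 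Standard elliptic regularity for Neumann problems, applied iteratively for $m,m-1,\ldots,1$ and combined with the algebra property of $H^{k-1}(\Om)$ in dimension three to control the nonlinear right-hand sides, yields $(u,s)\in L^\infty_t H^k\cap L^2_t H^{k+1}$.

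The main obstacle is the energy step. After $m$ time differentiations the nonlinear forcing contains products of the form $\p_t^{m-i}\n u\cdot\p_t^i\n u$ and $\p_t^{m-i}(A(u))\n\p_t^i s$, whose combined spatial regularity must lie in $L^2_t H^{k-2m}_x$. Closing this requires a careful interpolation across the full ladder of intermediate norms $L^\infty_t H^{k-2j}_x\cap L^2_t H^{k-2j+1}_x$ for $0\leq j\leq m-1$ provided by induction, together with the trace consistency ensured by the compatibility conditions so that boundary integrals arising in each integration by parts vanish; any mismatch between the order of compatibility imposed on the data and the regularity tested would force a boundary obstruction.
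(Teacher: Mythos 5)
Your proposal has the right skeleton (time differentiation, energy estimates tested with $v,w$ and $-\De v,-\De w$, then an elliptic bootstrap in $x$, with the compatibility conditions supplying the Neumann data), and your elliptic-regularity step at the end matches the paper's backward induction. However, there is a genuine gap at the energy step, and it is exactly the difficulty this theorem is about. You apply $\p_t^m$ to \eqref{llgsp1} and then perform the energy estimates directly on $(\p_t^m u,\p_t^m s)$, ``integrating by parts using the now-available Neumann conditions on $v$ and $w$.'' But those Neumann conditions are not available: the solution from Theorem \ref{mth1} is only in $L^\infty H^2\cap L^2H^3$, so $\p_t^m(u,s)$ for $m\geq 2$ is not known to exist in any space where your computation makes sense, and the compatibility conditions only prescribe $\p_\nu V_j=\p_\nu W_j=0$ at $t=0$ --- they do not propagate a Neumann condition on $\p_t^j u$ for $t>0$. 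A formal a priori estimate is not a proof here; it must be justified at the level of an approximation scheme.

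The obvious fix --- differentiate the Galerkin approximation \eqref{app-llgsp} $m$ times in $t$ --- is blocked, and the paper points this out explicitly: the projection bounds of Lemma \ref{es-P_n} hold only up to $H^3$, so one cannot control $\norm{\p_t^m(u^n,s^n)|_{t=0}}_{H^1}$ by the data once $m\geq 2$ (one would need $\norm{u_0^n}_{H^4}\leq C\norm{u_0}_{H^4}$, which is not known). The paper's actual route is therefore different in an essential way: it introduces a \emph{separate} linear parabolic system \eqref{comp-llgsp2} for new unknowns $(v,w)$ with initial data $(V_k,W_k)$ built from the compatibility conditions, runs a fresh Galerkin approximation for that linear system (where the coefficients are the already-regular $(u,s)$ and the source terms $F_k,Z_k$ are controlled via Proposition \ref{es-F_kZ_k}), proves uniqueness in $L^\infty H^1\cap L^2H^2$, and only then identifies $(v,w)\equiv(\p_t^k u,\p_t^k s)$. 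This construction-plus-uniqueness step is what legitimizes your formal estimates and the Neumann condition on $\p_t^k(u,s)$; without it the argument does not close. The paper also proceeds one time derivative per induction step (the property $\P(k)\Rightarrow\P(k+1)$), rather than jumping directly to $\p_t^m$, which is what allows the source terms $F_k,Z_k$ to be estimated from the previously established ladder.
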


Theorem \ref{mth1} is achieved by choosing a suitable auxiliary equation of spin equation with respect to $s$ (i.e. the second equation of \eqref{llgsp1}) and then applying Galerkin approximation to the modified equation of \eqref{llgsp1} and estimating some suitable energies directly. However, we cannot improve the regularity of strong solution $$(u,s)\in L^\infty([0,T], H^2(\Om,\Real^3))\cap L^2([0,T], H^3(\Om,\Real^3))$$ obtained in Theorem \ref{mth1} by getting higher order energy estimates, since the right hand sides of system \eqref{llgsp1} do not satisfy the homogeneous Neumann boundary condition. To proceed, following Carbou's idea in \cite{CJ}, we consider the differential of Galerkin approximation to system \eqref{llgsp1} with respect to time and prove by the same way that $(\p_tu,\p_ts)\in L^\infty([0,T], H^1(\Om,\Real^3))\cap L^2([0,T], H^2(\Om,\Real^3))$, where $0<T<T^*$. Thus, we can show Theorem \ref{mth2} by a bootstrap argument using equation \eqref{llgsp}.

However, we cannot enhance the regularity of solution $(u,s)$ in Theorem \ref{mth2} by applying directly the second order differential of Galerkin approximation to system \eqref{llgsp1}, because of the lower regularity of Galerkin projection, although the Galerkin projection is the crucial analytic tool in our argument (see Lemma \ref{es-P_n}).  Generally, to improve the regularity of solution $(u,s)$, we need to impose so-called compatibility conditions of initial data. Let $k\geq 1$, Considering the equation of $(\p^k_t u, \p^k_t s)$ with compatibility condition \eqref{com-cond}, the very regular solution to system \eqref{llgsp1} can be shown. More precisely, we use the simplest case to explain the strategy of enhancing regularity $\P$:
\begin{itemize}
	\item[$(1)$] Assume $(u_0,s_0)\in H^3(\Om,\Real^3)$. By considering the differential of the Galerkin approximation of \eqref{llgsp1} with respect to time $t$, we can show
	\[(u,s)\in  L^\infty([0,T],H^3(\Om,\Real^3))\cap L^2([0,T],H^4(\Om,\Real^3)).\]
	\item[$(2)$] Assuming $(u_0,s_0)\in H^4(\Om,\Real^3)$, we can get a regular solution \[(u_1,s_1)\in L^\infty([0,T],H^2(\Om,\Real^3))\cap L^2([0,T],H^3(\Om,\Real^3))\]
	 to the equation \eqref{comp-llgsp} of $(\p_t u,\p_t s)$. For this step, we need the compatibility condition at the boundary for $(\p_t u,\p_t s)$ when $t=0$.
	\item[$(3)$] The uniqueness guarantees $(\p_t u,\p_t s)=(u_1,s_1)$. It implies
	\[(u,s)\in L^\infty([0,T],H^4(\Om,\Real^3))\cap L^2([0,T],H^5(\Om,\Real^3)).\]
	 by a bootstrap argument using the equation \eqref{llgsp} again.
	\item[$(4)$] Assume $(u_0,s_0)\in H^5(\Om,\Real^3)$, we obtain
	\[(u,s)\in L^\infty([0,T],H^5(\Om,\Real^3))\cap L^2([0,T],H^6(\Om,\Real^3))\]
	by repeating the argument in $(1)$ to $(\p_t u,\p_ts)$.
\end{itemize}
Here $0<T<T^*$. For higher order regularity, to add higher order compatibility conditions we can prove Theorem \ref{mth3} by repeating the above process $\P$.

The rest of our paper is organized as follows. In section \ref{s: pre}, we introduce the basic notations in Sobolev space and some critical preliminary lemmas. Meanwhile the the compatibility condition of initial data to system \eqref{llgsp1} will also be given. In section \ref{s: reg-sol}, we prove Theorem \ref{mth1} by employing Galerkin approximation method. Theorem \ref{mth2} and Theorem \ref{mth3} will be built up in subsection \ref{ss: 1-reg} and \ref{ss: main-thm} respectively.

\section{Preliminary}\label{s: pre}
\subsection{Notations}\label{ss: pre-lem}\

In this subsection, we first recall some notations on Sobolev spaces which will be used in whole context. Let $u=(u_1,u_2,u_3):\Om\to\S^{2}\hookrightarrow\Real^3$ be a map. We set
$$H^{k}(\Om,\S^{2})=\{u\in W^{k,2}(\Om,\Real^{3}):|u|=1\,\,\text{for a.e. x}\in \Om\}$$
and
\[W^{k,l}_{2}(\Om\times[0,T],\S^{2})=\{u\in W^{k,l}_{2}(\Om\times[0,T],\Real^{3}):|u|=1\,\,\text{for a.e. (x,t)}\in \Om\times[0,T]\},\]
for $k,\,l\in \mathbb{N}$ and denote $H^0(\Om,\Real^3)=L^2(\Om,\Real^2)$.

Moreover, let $(B,\norm{\cdot}_B)$ be a Banach space and $f:[0,T]\to B$ be a map. For any $p>0$ and $T>0$, we define
\[\norm{f}_{L^p([0,T], B)}:=\(\int_{0}^{T}\norm{f}^p_{B}dt\)^{\frac{1}{p}},\]
and set
\[L^p([0,T],B):=\{f:[0,T]\to B:\norm{f}_{L^p([0,T],B)}<\infty\}.\]
In particular, we denote
\[L^{p}([0,T],H^{k}(\Om,\S^{2}))=\{u\in L^{p}([0,T],H^{k}(\Om,\Real^{3})):|u|=1\,\,\text{for a.e. (x,t)}\in \Om\times[0,T]\},\]
where $k\in \mathbb{N}$ and $p\geq 1$.

\subsection{Estimates of $h_d$ and some lemmas}\

For later application, we recall some regular results. Let $u:\Om \to \Real^3$ be a map. Then, in the sense of distributions, the induced vector field is defined by
\[h_d(u):=-\n\int_\Om\n N(x-y)u(u)\,dx,\]
where $N(x)=-\frac{1}{4\pi|x|}$ is the Newton potential on $\Real^3$. Hence, the following estimates of $h_d$ is a fundamental result in theory of singular integral operators, its proof can be found in \cite{CF,CJ, C-W}.

\begin{lem}\label{es-h_d}
	Let $p\in(1,\infty)$ and $\Om$ be a bounded smooth domain in $\Real^{3}$. Assume that $u \in W^{k,p}(\Om,\Real^{3})$ for $k\in \mathbb{N}$. Then, the restriction of $h_d(u)$ to $\Om$ belongs to $W^{k,p}(\Om, \Real^{3})$. Moreover, there exists constants $C_{k,p}$, which is independent of $u$, such that
	$$\lVert h_d(u)\rVert_{W^{k,p}(\Om)}\leq C_{k,p}\lVert u\rVert_{W^{k,p}(\Om)}.$$
	In fact, $h_{d}:W^{k,p}(\Om, \Real^{3})\to W^{k,p}(\Om, \Real^{3})$ is a linear bounded operator.
\end{lem}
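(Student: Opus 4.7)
The plan is to reduce Lemma \ref{es-h_d} to the classical $L^p$ theory of the Newton potential and to Calder\'on--Zygmund theory for second-order Riesz kernels. The proof splits naturally into two parts: the base case $k=0$, where $h_d$ is recognized as a genuine Calder\'on--Zygmund operator, and the general case $k\geq 1$, which I would obtain by elliptic regularity for the scalar potential $w=-N*\mathrm{div}(u\chi_\Om)$, using the identity $h_d=-\n w$.

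For $k=0$, let $\tilde u$ denote the zero extension of $u$ to $\Real^3$. A formal differentiation under the integral sign gives
\[
h_d(u)(x)\;=\;-\n_x\int_{\Real^3}\n N(x-y)\cdot\tilde u(y)\,dy\;=\;\int_{\Real^3}K(x-y)\,\tilde u(y)\,dy,\qquad x\in\Om,
\]
where the kernel $K(z)=-(\n\otimes\n N)(z)$ has components
\[
K_{ij}(z)=\frac{1}{4\pi}\,\frac{3z_iz_j-\de_{ij}|z|^2}{|z|^5},\qquad z\neq 0,
\]
plus a Dirac contribution at $z=0$ absorbed in the principal value. Each $K_{ij}$ is homogeneous of degree $-3$ with vanishing spherical mean, hence a Calder\'on--Zygmund kernel. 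The classical Calder\'on--Zygmund theorem yields $\norm{K*\tilde u}_{L^p(\Real^3)}\leq C_p\norm{\tilde u}_{L^p(\Real^3)}=C_p\norm{u}_{L^p(\Om)}$ for $p\in(1,\infty)$, and restricting to $\Om$ settles the case $k=0$.

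For $k\geq 1$, I would observe that $w$ solves $\De w=\mathrm{div}(u\chi_\Om)$ in $\Real^3$. Read distributionally, the source is $\mathrm{div}(u)\in W^{k-1,p}(\Om)$ extended by zero, together with a surface-supported single-layer term $-(u\cdot\nu)\,d\sigma_{\p\Om}$ arising from the jump of $u\chi_\Om$ across $\p\Om$. Interior $L^p$-regularity for $\De$ provides $w\in W^{k+1,p}_{\mathrm{loc}}(\Om)$ with norm controlled by $\norm{u}_{W^{k,p}(\Om)}$. For the behavior up to the boundary, I would invoke the standard mapping properties of the single-layer Newton potential on a smooth surface: it is bounded from $W^{k-1/p,p}(\p\Om)$ into $W^{k+1,p}(\Om)$, and since the trace $u\cdot\nu|_{\p\Om}\in W^{k-1/p,p}(\p\Om)$ is itself controlled by $\norm{u}_{W^{k,p}(\Om)}$, combining with the interior bound produces $\norm{w}_{W^{k+1,p}(\Om)}\leq C_{k,p}\norm{u}_{W^{k,p}(\Om)}$. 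Taking one gradient gives the stated estimate for $h_d$.

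The main obstacle is not the interior estimate, which is essentially a free consequence of translation invariance plus Calder\'on--Zygmund boundedness, but the treatment of the boundary: the surface-supported distribution produced by differentiating $u\chi_\Om$ across $\p\Om$ requires classical layer-potential theory, and this is precisely where the smoothness assumption on $\p\Om$ becomes essential. Linearity of the operator $h_d:W^{k,p}(\Om,\Real^3)\to W^{k,p}(\Om,\Real^3)$ is immediate from the integral definition.
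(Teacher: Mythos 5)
The paper does not actually prove Lemma \ref{es-h_d}; it defers to \cite{CF,CJ,C-W}. Your argument is essentially the standard proof given in those references (Carbou--Fabrie phrase it as elliptic regularity for the transmission problem $\De w=\mbox{div}\,u$ in $\Om$, $\De w=0$ outside, with continuity of $w$ and a jump $u\cdot\nu$ in the normal derivative across $\p\Om$), so in substance you are reconstructing the cited proof rather than taking a different route. The $k=0$ case via Calder\'on--Zygmund boundedness of $-\n\otimes\n N$ is correct as written.

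One step in the $k\geq1$ part is looser than it should be. You control the single-layer contribution up to the boundary via the mapping $W^{k-1/p,p}(\p\Om)\to W^{k+1,p}(\Om)$, but for the volume part $N*(\chi_\Om\,\mbox{div}\,u)$ you only invoke \emph{interior} regularity, which yields $W^{k+1,p}_{\mathrm{loc}}(\Om)$ and does not control the norm up to $\p\Om$; "combining with the interior bound" therefore does not by itself produce the global estimate $\norm{w}_{W^{k+1,p}(\Om)}\leq C\norm{u}_{W^{k,p}(\Om)}$ once $k\geq2$, since $\chi_\Om\,\mbox{div}\,u$ is not in $W^{k-1,p}(\Real^3)$ and global Calder\'on--Zygmund theory no longer applies directly. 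The gap is filled by an induction on $k$ using exactly your two ingredients: each further differentiation of the volume potential produces $N*(\chi_\Om\,\p_i\mbox{div}\,u)$ plus another single-layer potential with density the trace of a derivative of $u$, and both are controlled by $\norm{u}_{W^{k,p}(\Om)}$ at the previous level. Alternatively one cites up-to-the-boundary $L^p$ regularity for the transmission problem outright, as in \cite{CF}. With that iteration made explicit the proof is complete.
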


The $L^2$ theory of Laplace operator with Neumann boundary condition implies the following Lemma of  equivalent norm, to see \cite{C-W, Weh}.
\begin{lem}\label{eq-norm}
	Let $\Om$ be a bounded smooth domain in $\Real^{m}$ and $k\in \mathbb{N}$. There exists a constant $C_{k,m}$ such that, for all $u\in H^{k+2}(\Om)$ with $\frac{\p u}{\p \nu}|_{\p\Om}=0$,
	\begin{equation}\label{eq-n}
	\norm{u}_{H^{2+k}(\Om)}\leq C_{k,m}(\norm{u}_{L^{2}(\Om)}+\norm{\De u}_{H^{k}(\Om)}).
	\end{equation}	
\end{lem}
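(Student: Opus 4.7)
The plan is to prove the estimate by induction on $k$, with the base case $k=0$ being the classical $H^2$-regularity theory for the inhomogeneous Neumann problem on a smooth bounded domain. For $k=0$ one has
\[\|u\|_{H^2(\Om)} \leq C(\|u\|_{L^2(\Om)} + \|\De u\|_{L^2(\Om)})\]
whenever $\p u/\p\nu|_{\p\Om}=0$. I would establish this via a partition of unity on $\bar\Om$: on interior patches standard interior regularity of $-\De$ on $\Real^m$ applies directly; on boundary patches I would straighten $\p\Om$ via a smooth diffeomorphism, use tangential difference quotients in the Lax-Milgram weak formulation $\int \n u \cdot \n\varphi = -\int (\De u)\varphi$ to obtain $H^1$ control of all tangential first derivatives, and finally recover the pure normal second derivative algebraically from $\p_n^2 u = \De u - \De_{\mathrm{tan}} u$ plus lower-order corrections produced by the straightening.

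For the inductive step, assume the claim for indices up to $k-1$ and let $u \in H^{k+2}(\Om)$ with $\p u/\p\nu|_{\p\Om}=0$. Localizing again and treating interior and boundary patches separately, on a flattened boundary patch I would apply the base case estimate to $v := \p^{\alpha'}u$ for each tangential multi-index $\alpha'$ with $|\alpha'|\leq k$. The function $v$ satisfies $\De v = \p^{\alpha'}(\De u) + [\text{commutator}]$ and $\p v/\p\nu|_{\p\Om}=0$ modulo lower-order boundary terms, yielding $H^2$ bounds on all tangential derivatives of $u$ of order $\le k$. Purely normal derivatives of orders $k+1$ and $k+2$ are then extracted from $\p_m^2 u = \De u - \De_{\mathrm{tan}} u + (\text{first order})$, with each contribution now controlled by the inductive hypothesis.

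The main obstacle is the bookkeeping for boundary commutators: when the straightening diffeomorphism is not flat, tangential derivatives do not preserve the Neumann condition exactly, and one picks up lower-order contributions involving derivatives of the metric and of the second fundamental form of $\p\Om$. These must be absorbed using either Gagliardo-Nirenberg interpolation or the inductive hypothesis at level $k-1$. A cleaner abstract alternative, which bypasses the iterative commutator analysis altogether, is to observe that the map
\[u \longmapsto \Bigl(\De u,\; \tfrac{1}{\vol(\Om)}\int_\Om u\Bigr) \colon \{u \in H^{k+2}(\Om) : \p u/\p\nu|_{\p\Om}=0\} \longrightarrow H^k(\Om) \times \Real\]
is a continuous linear bijection (surjectivity is the solvability of the Neumann problem in $H^{k+2}$, and injectivity follows since constants have zero Laplacian and nonzero mean). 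Invoking the open mapping theorem, together with the trivial bound $|\int_\Om u| \leq C\|u\|_{L^2(\Om)}$, yields the stated inequality at once.
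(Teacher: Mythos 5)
The paper itself offers no proof of this lemma; it simply cites the $L^2$ (Agmon--Douglis--Nirenberg type) theory for the Neumann Laplacian. Your first route --- localization, flattening of the boundary, tangential difference quotients in the weak formulation, and recovery of the normal derivatives from $\p_m^2u=\De u-\De_{\mathrm{tan}}u+(\text{lower order})$ --- is exactly the standard proof of that cited theory, and the outline is sound. One presentational caveat: rather than literally applying the $k=0$ estimate to $v=\p^{\alpha'}u$ (whose Neumann condition holds only modulo lower-order terms, as you note), it is cleaner to rerun the difference-quotient argument in the weak formulation at each level of the induction; this avoids having to justify that the perturbed boundary condition is admissible for the base-case estimate.

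Your ``cleaner abstract alternative'' contains a genuine error: the map $u\mapsto\bigl(\De u,\ \tfrac{1}{\vol(\Om)}\int_\Om u\bigr)$ is \emph{not} surjective onto $H^{k}(\Om)\times\Real$, because the Neumann condition forces $\int_\Om\De u\,dx=\int_{\p\Om}\frac{\p u}{\p\nu}\,d\sigma=0$, so the first component always lands in the mean-zero subspace. The Neumann problem $\De u=f$, $\frac{\p u}{\p\nu}|_{\p\Om}=0$ is solvable only under this compatibility condition. The repair is routine --- replace the codomain by the closed subspace $\{f\in H^k(\Om):\int_\Om f=0\}\times\Real$, which is itself a Banach space, and the bounded-inverse theorem then gives $\norm{u}_{H^{k+2}}\leq C(\norm{\De u}_{H^k}+|\int_\Om u|)\leq C(\norm{\De u}_{H^k}+\norm{u}_{L^2})$ as desired --- but as written the bijectivity claim is false. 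You should also be aware that this route does not really ``bypass the commutator analysis'': the surjectivity input (solvability of the Neumann problem with $H^{k+2}$ regularity for $H^k$ data) is precisely the higher elliptic regularity theorem whose proof is your first argument, so the abstract version is only shorter if one is permitted to quote that theorem --- which, to be fair, is what the paper itself does.
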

In particular, we define the $H^{k+2}$-norm of $u$ as follows
$$\norm{u}_{H^{k+2}(\Om)}:=\norm{u}_{L^2(\Om)}+\norm{\De u}_{H^k(\Om)}.$$
We also need some basic lemmas on the space $H^m(\Om)$ with $m\geq 1$ below ( cf. \cite{CJ}).
\begin{lem}\label{algebra1}
Assume $\Om$ be a bounded smooth domain in $\Real^3$. Let $f\in H^1(\Om)$ and $g\in H^m(\Om)$ with $m\geq2$. Then $f\cdot g\in H^1(\Om)$.
\end{lem}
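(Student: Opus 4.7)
The plan is to verify directly that $fg\in L^2(\Om)$ and that its distributional gradient also lies in $L^2(\Om)$, using the fact that in dimension three the hypothesis $m\geq 2$ gives $g$ two ``free'' derivatives above the Sobolev embedding threshold.

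First I would invoke the Sobolev embeddings valid on bounded smooth domains of $\Real^3$: from $g\in H^m(\Om)\subset H^2(\Om)$ one has $g\in L^\infty(\Om)$ with $\norm{g}_{L^\infty}\leq C\norm{g}_{H^2}$, since $2>3/2$. This immediately yields
\[
\norm{fg}_{L^2(\Om)}\leq \norm{g}_{L^\infty(\Om)}\norm{f}_{L^2(\Om)}\leq C\norm{g}_{H^m(\Om)}\norm{f}_{H^1(\Om)},
\]
so $fg\in L^2(\Om)$.

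Next I would compute the distributional gradient. Approximating $f$ by smooth functions $f_\ep\in C^\infty(\bar\Om)$ converging to $f$ in $H^1(\Om)$ (using that $C^\infty(\bar\Om)$ is dense in $H^1(\Om)$ for smooth bounded $\Om$), and similarly $g_\ep\to g$ in $H^2(\Om)$, one has $\n(f_\ep g_\ep)=g_\ep\n f_\ep+f_\ep\n g_\ep$ classically. Passing to the limit requires showing each term is controlled in $L^2$: for the first, $\norm{g\n f}_{L^2}\leq \norm{g}_{L^\infty}\norm{\n f}_{L^2}\leq C\norm{g}_{H^m}\norm{f}_{H^1}$; for the second, since $\n g\in H^{m-1}(\Om)\subset H^1(\Om)\hookrightarrow L^6(\Om)$ and $f\in H^1(\Om)\hookrightarrow L^3(\Om)$ in three dimensions, H\"older's inequality gives
\[
\norm{f\n g}_{L^2(\Om)}\leq \norm{f}_{L^3(\Om)}\norm{\n g}_{L^6(\Om)}\leq C\norm{f}_{H^1(\Om)}\norm{g}_{H^m(\Om)}.
\]
Thus both sides of $\n(f_\ep g_\ep)=g_\ep\n f_\ep+f_\ep\n g_\ep$ converge in $L^2(\Om)$, showing that $\n(fg)=g\n f+f\n g\in L^2(\Om)$ in the distributional sense.

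There is no real obstacle here; the only thing to be careful about is to distribute the integrability budget so that each factor of the product is in a Lebesgue space whose conjugate indices add up to one half. The critical observation is that the assumption $m\geq 2$ in dimension $3$ gives $g\in L^\infty$ and $\n g\in L^6$, which is exactly what is needed to match the dual embedding of $f\in H^1\subset L^6\cap L^3$. In fact, the argument yields the quantitative bound $\norm{fg}_{H^1(\Om)}\leq C\norm{f}_{H^1(\Om)}\norm{g}_{H^m(\Om)}$, which is what subsequent sections of the paper will actually use.
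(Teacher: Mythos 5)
Your proof is correct and is exactly the standard argument the paper relies on (the paper itself omits the proof, simply citing Carbou--Jizzini): bound $fg$ and $g\nabla f$ via $H^2(\Om)\hookrightarrow L^\infty(\Om)$, and bound $f\nabla g$ via $H^1(\Om)\hookrightarrow L^3(\Om)$ paired with $\nabla g\in H^1(\Om)\hookrightarrow L^6(\Om)$, with the product rule justified by density of $C^\infty(\bar\Om)$. The quantitative bound $\norm{fg}_{H^1(\Om)}\leq C\norm{f}_{H^1(\Om)}\norm{g}_{H^m(\Om)}$ you extract is indeed what the later estimates use.
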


Applying the fact $H^{2}(\Om)\subset W^{1,6}(\Om)\subset L^\infty(\Om)$, the following result can be obtained from the above lemma directly.
\begin{lem}\label{algebra2}
Assume $\Om$ be a bounded smooth domain in $\Real^3$. Let $f$ and $g$ in $H^m(\Om)$ with $m\geq 2$, there holds
$$f\cdot g\in H^m(\Om).$$
In fact, $(H^m(\Om),\cdot)$ is an algebra.
\end{lem}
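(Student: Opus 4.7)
The plan is to induct on $m \geq 2$, using Lemma \ref{algebra1} both as the engine for the base case and, via the inductive hypothesis, for the higher levels. The starting observation is the product rule identity $\n(fg) = f\,\n g + g\,\n f$, which converts the problem at level $m+1$ into an algebra-type question at level $m$ for the components.

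For the base case $m=2$, I will first invoke Lemma \ref{algebra1} directly, viewing $f \in H^1(\Om)$ and $g \in H^2(\Om)$, to obtain $fg \in H^1(\Om)$. To upgrade this to $H^2(\Om)$ I write $\n(fg) = f\,\n g + g\,\n f$ and note that each summand pairs an $H^2$ factor with an $H^1$ factor (namely $\n g$ or $\n f$). A second application of Lemma \ref{algebra1} then places both summands in $H^1(\Om)$, hence $\n(fg) \in H^1(\Om)$ and consequently $fg \in H^2(\Om)$.

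For the inductive step, assuming the claim holds for some $m \geq 2$, I take $f, g \in H^{m+1}(\Om)$. The hypothesis already gives $fg \in H^m(\Om)$ since $H^{m+1} \subset H^m$. Using $\n(fg) = f\,\n g + g\,\n f$ once more, I observe that both pairs $(f, \n g)$ and $(g, \n f)$ now lie in $H^m(\Om) \times H^m(\Om)$, so the inductive hypothesis (applied componentwise) yields $f\,\n g, g\,\n f \in H^m(\Om)$. Therefore $\n(fg) \in H^m(\Om)$, which combined with $fg \in H^m(\Om)$ promotes $fg$ into $H^{m+1}(\Om)$, closing the induction.

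The only subtle point is the base case, where the restriction $m \geq 2$ is genuinely necessary, since $H^1(\Om) \cdot H^1(\Om) \not\subset H^1(\Om)$ in three dimensions. The embedding chain $H^2(\Om) \subset W^{1,6}(\Om) \subset L^\infty(\Om)$ pointed out in the paper is exactly what makes Lemma \ref{algebra1} work and is what propagates the algebra property upward; the inductive step itself is pure bookkeeping with the Leibniz rule.
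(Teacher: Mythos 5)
Your proof is correct and follows essentially the route the paper intends: the paper simply remarks that the lemma "can be obtained from the above lemma directly" using the embedding $H^2(\Om)\subset W^{1,6}(\Om)\subset L^\infty(\Om)$, and your induction via the Leibniz rule is the natural way to fill in those details, with both the base case and the inductive step resting on Lemma \ref{algebra1} exactly as the paper suggests.
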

\subsection{Comparison theorem for ODE and Aubin-Simon's compactness}\

In order to show the uniform estimates and the convergence of solutions to the approximated equation constructed in coming sections, we need to use the comparison theorem for ordinary differential equation(ODE) and the classical compactness result in \cite{Sim}.

\begin{lem}\label{c-thm}
Let $f: \Real^+\times \Real\to \Real$ be a continuous function, which is locally Lipschitz on the second variable. Let $z:[0,T^*)\to \Real$ be the maximal solution of the Cauchy problem:
\begin{equation*}
\begin{cases}
z^\prime=f(t,z),\\[1ex]
z(0)=z_0.	
\end{cases}
\end{equation*}
Let $y:\Real^+\to \Real$ be a $C^1$ function such that
\begin{equation*}
\begin{cases}
y^\prime\leq f(t,y),\\[1ex]
y(0)\leq z_0.	
\end{cases}
\end{equation*}
Then,
$$y(t)\leq z(t),\quad\text{t}\in [0,T^*).$$
\end{lem}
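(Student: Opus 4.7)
My plan is to prove the comparison inequality $y(t) \leq z(t)$ by a direct contradiction argument based on the local Lipschitz continuity of $f$ in its second variable, combined with a Gr\"onwall-type estimate on the difference. The maximality of $z$ is only needed to guarantee that $z$ is defined on the full interval $[0, T^\ast)$ where we want to compare.

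First I would set $w(t) := y(t) - z(t)$, which is a $C^1$ function on $[0, T^\ast)$ satisfying $w(0) \leq 0$ by hypothesis. Suppose for contradiction that the conclusion fails, i.e.\ that there is some $t_1 \in (0, T^\ast)$ with $w(t_1) > 0$. Let
\[
t_0 := \sup\{\, t \in [0, t_1] : w(t) \leq 0\,\}.
\]
By continuity of $w$ and the fact that $w(0) \leq 0 < w(t_1)$, we have $0 \leq t_0 < t_1$, $w(t_0) = 0$, and $w(t) > 0$ for every $t \in (t_0, t_1]$. On the compact interval $[t_0, t_1]$ both $y$ and $z$ are bounded, so their graphs lie in a compact subset of $\Real^+ \times \Real$ on which $f$ is Lipschitz in its second argument with some constant $L$.

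Now for every $t \in (t_0, t_1]$ we estimate, using the differential inequality for $y$, the equation for $z$, and $y(t) > z(t)$,
\[
w'(t) = y'(t) - z'(t) \leq f(t,y(t)) - f(t,z(t)) \leq L\,(y(t) - z(t)) = L\,w(t).
\]
Hence $\bigl(e^{-Lt} w(t)\bigr)' \leq 0$ on $[t_0, t_1]$, which together with $w(t_0) = 0$ yields $w(t_1) \leq e^{L(t_1 - t_0)} w(t_0) = 0$, contradicting $w(t_1) > 0$. Therefore $w(t) \leq 0$ on the entire existence interval of $z$.

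The only real subtlety is that the hypothesis only furnishes a \emph{differential inequality} for $y$ and the Lipschitz constant for $f$ depends on the region, so one cannot integrate $y$ and $z$ against each other and subtract directly. The correct move is the ``first crossing time'' $t_0$: this reduces the comparison to an initial-value problem with $w(t_0)=0$ and the strict sign $w>0$ on an interval to its right, which is exactly the setting in which a one-sided Gr\"onwall inequality forces $w \equiv 0$, giving the contradiction. I expect this step---recognizing that one needs to localize to avoid assuming a global Lipschitz constant---to be the only nontrivial point; everything else is routine.
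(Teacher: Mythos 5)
Your proof is correct: the first-crossing-time reduction to $w(t_0)=0$ with $w>0$ on $(t_0,t_1]$, followed by the localized Lipschitz bound and the one-sided Gr\"onwall estimate $\bigl(e^{-Lt}w\bigr)'\leq 0$, is the standard and complete argument for this comparison lemma. The paper itself states Lemma \ref{c-thm} without proof, treating it as a classical fact, so there is no alternative argument in the paper to compare against; your write-up supplies exactly the proof one would expect.
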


\begin{lem}[Aubin-Simon compactness Lemma]\label{A-S}
	Let $X\subset B\subset Y$ be Banach spaces with compact embedding $X\hookrightarrow B$, $F$ is a bounded set in $L^{q}([0,T],B)$ for $q>1$.
	If $F$ is bounded in $L^{1}([0,T],X)$ and $\frac{\p F}{\p t}$ is bounded in $L^{1}([0,T],Y)$, where $\frac{\p F}{\p t}=\{\frac{\p f}{\p t}:f\in F\}$, then $F$ is relatively compact in $L^{p}([0,T],B)$, for any $1\leq p<q$.
\end{lem}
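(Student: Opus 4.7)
The plan is to deduce relative compactness in $L^p([0,T], B)$ via a Kolmogorov-Fr\'echet-Riesz type criterion in the Bochner setting. This reduces the task to two independent ingredients: uniform equicontinuity of $F$ under time translations in $L^p([0,T], B)$, and relative compactness in $B$ of the integral averages $\int_{t_1}^{t_2} f(s)\,ds$, uniformly over $f \in F$.

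The first step is to establish an Ehrling-type interpolation inequality: from the compact embedding $X \hookrightarrow B$ together with the continuous embedding $B \hookrightarrow Y$, a standard compactness-and-contradiction argument produces, for each $\eta > 0$, a constant $C_\eta > 0$ such that
$$\|v\|_B \leq \eta \|v\|_X + C_\eta \|v\|_Y, \qquad v \in X.$$
Applying this pointwise in $t$ to the translate $f(t+h) - f(t)$ and integrating over $[0, T-h]$ gives
$$\|\tau_h f - f\|_{L^1((0,T-h), B)} \leq 2\eta \|f\|_{L^1([0,T], X)} + C_\eta \|\tau_h f - f\|_{L^1((0,T-h), Y)}.$$

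Next, I would control the $Y$-translation term via the fundamental theorem of calculus in $Y$: $\|f(t+h)-f(t)\|_Y \leq \int_t^{t+h} \|\partial_t f(s)\|_Y\, ds$, whence $\|\tau_h f - f\|_{L^1((0,T-h), Y)} \leq h\,\|\partial_t f\|_{L^1([0,T], Y)}$. Given a target $\varepsilon > 0$, I first choose $\eta$ small enough to absorb the $X$-term and then $h$ small enough to make the $Y$-term small; this yields uniform equicontinuity of $F$ in $L^1((0,T-h), B)$ under translations. Since $F$ is bounded in $L^q([0,T], B)$ with $p < q$, the interpolation bound $\|g\|_{L^p} \leq \|g\|_{L^1}^{\theta}\, \|g\|_{L^q}^{1-\theta}$ with $\theta$ chosen so that $\tfrac{1}{p}=\theta+\tfrac{1-\theta}{q}$ promotes this to uniform equicontinuity in $L^p([0,T], B)$.

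For the second ingredient, the bound $\|\int_{t_1}^{t_2} f(s)\, ds\|_X \leq \|f\|_{L^1([0,T], X)}$ places all such averages in a fixed bounded subset of $X$, which by the compact embedding $X \hookrightarrow B$ is relatively compact in $B$. Feeding both ingredients into the Kolmogorov-Fr\'echet-Riesz characterization of relatively compact subsets of $L^p([0,T], B)$ closes the argument. The main obstacle is the rigorous justification of this Banach-valued compactness characterization and the interpolation step upgrading $L^1$-equicontinuity to $L^p$-equicontinuity; both are the technical heart of Simon's theorem cited in the excerpt.
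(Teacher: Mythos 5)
The paper gives no proof of this lemma; it is quoted as a known result from Simon \cite{Sim}, and your argument is a correct reconstruction of Simon's own proof: the Ehrling interpolation inequality from the compact embedding $X\hookrightarrow B\hookrightarrow Y$, the translation estimate $\norm{\tau_h f-f}_{L^1(Y)}\leq h\norm{\p_t f}_{L^1(Y)}$, the $L^1$--$L^q$ H\"older interpolation to upgrade equicontinuity to $L^p$ (which is exactly where $p<q$ enters), and the Bochner-valued Kolmogorov/Riesz--Fr\'echet criterion for the averages $\int_{t_1}^{t_2}f\,ds$. All steps are sound, so your proposal matches the cited source's approach and nothing further is needed.
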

\subsection{Galerkin basis and Galerkin projection}\label{Ga-basis}\

Let $\Om$ be a bounded smooth domain in $\Real^m$, $\ld_{i}$ be the $i^{th}$ eigenvalue of the operator $\De-I$  with Neumann boundary condition, whose corresponding eigenfunction is $f_{i}$. That is,
$$(\De-I)f_{i}=-\ld_{i}f_{i}\,\,\,\,\quad\text{with}\quad\,\,\,\,\frac{\p f_{i}}{\p \nu}|_{\p\Om}=0.$$

Without loss of generality, we assume $\{f_{i}\}_{i=1}^{\infty}$ are completely standard orthonormal basis of $L^{2}(\Om,\Real^{1})$. Let $H_{n}=span\{f_{1},\dots f_{n}\}$ be a finite subspace of $L^{2}$, $P_{n}:L^{2}(\Om, \Real^1)\to H_{n}$ be the canonical projection. In fact, for any $f\in L^{2}$,
\[f^{n}=P_{n}f=\sum_{i=1}^{n}\<f,f_{i}\>_{L^{2}}f_{i}\quad\text{and}\quad \lim_{n\to \infty}\norm{f-f^{n}}_{L^{2}}=0.\]

For this canonical projection, we have the following uniform estimates, which is essential for our method to get very regularity of solution in section \ref{s: very-reg-sol}. Its proof can be found in \cite{CJ}.
\begin{lem}\label{es-P_n}
	There exists a constant $C$ such that for all $n$, the projection $P_n$ satisfies the following properties:
	\begin{enumerate}
		\item For all $f\in H^1(\Om,\Real^1)$, $\norm{P_n(f)}_{H^1(\Om)}\leq \norm{f}_{H^1(\Om)}$,
		\item For all $f\in H^2(\Om,\Real^1)$ such that $\frac{\p f}{\p \nu}|_{\p \Om}=0$, $\norm{P_n(f)}_{H^2(\Om)}\leq C\norm{f}_{H^2(\Om)}$,
		\item For all $f\in H^3(\Om,\Real^1)$ such that $\frac{\p f}{\p \nu}|_{\p \Om}=0$, $\norm{P_n(f)}_{H^3(\Om)}\leq C\norm{f}_{H^3(\Om)}$.
	\end{enumerate}
\end{lem}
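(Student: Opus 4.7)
The plan is to exploit the spectral decomposition of any $f\in L^2(\Om)$ along the eigenfunctions $\{f_i\}$ of $\De-I$ with Neumann data, together with Lemma \ref{eq-norm} which converts $H^{k+2}$ bounds into $L^2$ plus $H^k$ bounds on the Laplacian as soon as the Neumann condition holds. A structural fact used throughout is that $P_nf$, being a finite linear combination of the $f_i$, automatically satisfies $\p P_nf/\p\nu|_{\p\Om}=0$. For item (1), a one-line Green's identity yields, for any $f\in H^1(\Om)$,
$$\int_\Om \n f\cdot \n f_i+\int_\Om ff_i=\ld_i\<f,f_i\>_{L^2},$$
so the bilinear form $a(f,g):=\int_\Om\n f\cdot\n g+\int_\Om fg$, equivalent to the $H^1$ inner product, diagonalizes as $a(f_i,f_j)=\ld_i\de_{ij}$. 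Expanding in the $L^2$-orthonormal basis gives $\norm{f}_{H^1}^2=\sum_i\ld_i|\<f,f_i\>|^2$ and $\norm{P_nf}_{H^1}^2=\sum_{i\le n}\ld_i|\<f,f_i\>|^2$; truncation cannot enlarge the sum, so item (1) holds with constant $1$.

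For item (2), since $P_nf$ satisfies the Neumann condition, Lemma \ref{eq-norm} with $k=0$ reduces the task to controlling $\norm{\De P_nf}_{L^2}$. Using $\De f_i=(1-\ld_i)f_i$ one has
$$\De P_nf=\sum_{i\le n}(1-\ld_i)\<f,f_i\>f_i,$$
while for $f\in H^2(\Om)$ with $\p f/\p\nu=0$ a double integration by parts (both boundary terms vanish because $f$ and $f_i$ carry Neumann data) yields $\<\De f,f_i\>=(1-\ld_i)\<f,f_i\>$, and hence $\norm{\De f}_{L^2}^2=\sum_i(1-\ld_i)^2|\<f,f_i\>|^2$. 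Truncation then bounds $\norm{\De P_nf}_{L^2}$ by $\norm{\De f}_{L^2}$, which combined with $\norm{P_nf}_{L^2}\le\norm{f}_{L^2}$ and Lemma \ref{eq-norm} gives item (2).

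For item (3), the decisive identity is the commutation
$$\De P_nf=P_n(\De f),$$
valid whenever $f\in H^2(\Om)$ with $\p f/\p\nu|_{\p\Om}=0$; it is an immediate consequence of the boundary-term calculation from the previous paragraph, since $\<\De f,f_i\>=(1-\ld_i)\<f,f_i\>$ is precisely the Fourier coefficient of $\De P_nf$. Once this is in hand, Lemma \ref{eq-norm} with $k=1$ reduces the $H^3$ bound to estimating $\norm{\De P_nf}_{H^1}=\norm{P_n(\De f)}_{H^1}$, and item (1) applied to $\De f\in H^1(\Om)$ closes the loop.

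The principal obstacle, and the reason the lemma halts at order three, is precisely the commutation $\De P_nf=P_n\De f$: it requires the Neumann condition on $f$, yet $\De f$ generally does not satisfy $\p(\De f)/\p\nu|_{\p\Om}=0$ even when $f$ does, so the trick cannot be iterated to produce an analogous $H^4$ bound. This is exactly why, as the authors announce in their strategy $\P$, higher regularity must be obtained by differentiating the Galerkin system in $t$ and imposing compatibility conditions at $t=0$, rather than by projecting higher-order Sobolev data directly through $P_n$.
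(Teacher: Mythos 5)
Your proof is correct and is essentially the standard spectral argument that the paper defers to (the lemma is quoted from Carbou--Jizzini, where it is proved exactly this way): diagonalize the $H^1$ inner product on the Neumann eigenbasis for item (1), use the commutation $\<\De f,f_i\>=(1-\ld_i)\<f,f_i\>$ together with Lemma \ref{eq-norm} for items (2) and (3), and observe that the obstruction to $H^4$ is that $\De f$ loses the Neumann condition. The only cosmetic point is that for item (1) Bessel's inequality in $(H^1,a)$ already suffices; the asserted Parseval equality, while true, is not needed.
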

\begin{rem}
	Unfortunately, we can't get such estimates for $f\in H^m(\Om,\Real^1)$ such that $\frac{\p f}{\p \nu}|_{\p \Om}=0$, when $m\geq 4$.
\end{rem}

\subsection{Compatibility conditions of the initial data}\label{ss: comp-cond}\

In order to get higher regularity of the solution to system \eqref{llgsp1} in Theorem \ref{mth3}, we now turn  to define the compatibility conditions on the initial data. To clearly clarify the ideas of deriving compatibility conditions from equation \eqref{llgsp1}, first of all we assume that $(u,s)$ is a smooth solution.

Let $(u_k,s_k)=(\p^k_t u,\p^k_t s)$ with $k\geq 1$. Then a tedious but direct calculation shows $(u_k,s_k)$ satisfies the following equation
\begin{equation}\label{comp-llgsp3}
\begin{cases}
\p_t u_k=\al \De u_k-u\times \De u_k+K_k(\n u_k,\n s_k)+L_k(u_k, s_k)+F_k(u,s)&\text{(x,t)}\in\Om\times \Real^+,\\[1ex]
\p_t s_k=-\mbox{div}\(A(u)\n s_k\)+Q_k(\n u_k,\n s_k)+T_k(u_k,s_k)+Z_k(u,s)&\text{(x,t)}\in\Om\times \Real^+,
%\frac{\p u_k}{\p \nu}=0\quad\quad&\text{(x,t)}\in\p \Om\times \Real^+,\\[1ex]
%\frac{\p s_k}{\p \nu}=0\quad\quad&\text{(x,t)}\in\p \Om\times \Real^+.
\end{cases}
\end{equation}
where
\begin{equation*}
\begin{aligned}
K_k(\n u_k,\n s_k)=&2\al (\n u_k\cdot \n u) u,\\
Q_k(\n u_k,\n s_k)=&-\mbox{div}(u_k\otimes J_e),
\end{aligned}
\end{equation*}
\begin{equation*}
\begin{aligned}
L_k(u_k,s_k)=&\al |\n u|^2u_k-\al u_k\times \(u\times(\tilde{h}(u)+s)\)-\al u\times \(u_k\times(\tilde{h}(u)+s)\)\\
&-\al u\times \(u\times(h_d(u_k)-\n^2\Phi(u)\cdot u_k+s_k)\)\\
&-u\times\(h_d(u_k)-\n^2\Phi(u)\cdot u_k+s_k\)-u_k\times\(h(u)+s\),\\
F_k(u,s)=&\sum_{i+j+l=k,\,0\leq i,j,l<k}\n u_i\#\n u_j\#u_l+\sum_{i+j+l=k,0\leq i,j,l<k} u_i\#u_j\#(\bar{h}(u_l)+s_l)\\
&+\sum_{i+j=k,\,0\leq i,j<k}u_i\#(\bar{h}(u_j)+s_j)+\sum_{i+j=k,\,0\leq i,j<k}u_i\#\De u_j+R_k,\\
T_k(u_k,s_k)=&\th \mbox{div}(D_0u_k\otimes(\n s\cdot u+D_0u\otimes(\n s\cdot u_k)))-D_0s_k-D_0s_k\times u-D_0 s\times u_k,\\
Z_k(u,s)=&\mbox{div}\(D_0\th \sum_{i+j+l=k,\,i,j,l<k} u_i\#\n s_j\#u_l\)\\
&+\sum_{i+j=k,\,i,j<k}D_0s_i\# u_j-\sum_{i+j=k,\,i<k}\mbox{div}(u_{i}\#\p^j_tJ_e).
\end{aligned}
\end{equation*}
Here
\begin{align*}
\bar{h}(u_l)=&h_d(u_l)-\n^{2}\Phi(u)\cdot u_l+\sum_{j_1+\dots+j_i=l,\,i>1}\n^{i+1}_u\Phi(u)\#u_{j_1}\dots\#u_{j_i},\\
R_k=&u\times (u\times \sum_{j_1+\dots+j_i=k,\,i>1}\n^{i+1}_u\Phi(u)\#u_{j_1}\dots\#u_{j_i})\\
&+u\times \sum_{j_1+\dots+j_i=k,\,i>1}\n^{i+1}_u\Phi(u)\#u_{j_1}\dots \#u_{j_i},
\end{align*}
and $\#$ denotes the linear contraction.

Its initial data is:
\begin{equation}\left\{
\begin{aligned}
V_k(u_0,s_0)=u_k(x,0)=&\al \De V_{k-1}-\al V_0\times \De V_{k-1}+K_{k-1}(\n V_{k-1},\n W_{k-1})\\
&+L_{k-1}(V_{k-1},W_{k-1})+F_{k-1},\\
W_k(u_0,s_0)=s_k(x,0)=&-\mbox{div}\(A(V_0)\n W_{k-1}\)+Q_{k-1}(\n V_{k-1},\n W_{k-1})\\
&+T_{k-1}(V_{k-1},W_{k-1})+Z_{k-1}.
\end{aligned}\right.
\end{equation}
Here $(u_l,s_l)$ has been replaced by $(V_l,W_l)$ in the terms $K_{k-1}$, $L_{k-1}$, $F_{k-1}$, $Q_{k-1}$, $T_{k-1}$ and $Z_{k-1}$, and $(V_0,W_0)=(u_0,s_0)$.

In particular, we have
\begin{equation*}
\begin{cases}
V_1=\al\(\De u_0+|\n u_0|^2u_0-u_0\times (u_0\times(\tilde{h}(u_0)+s_0))\)-u_0\times(\De u_0 +\tilde{h}(u_0)+s_0),\\
W_1=-\mbox{div} J_s(u_0,s_0)-D_0(x)\cdot s_0-D_0(x)\cdot s_0\times u_0.
\end{cases}
\end{equation*}
Now, we are in the position to state the compatibility conditions on initial data $(u_0,s_0)$, associated to equation \eqref{llgsp1}, as follows.
\begin{defn}\label{def-comp}
Let $k\in \mathbb{N}$, $(u_0,s_0)\in H^{2k+1}(\Om,\Real^3)$ and $\p^i_tJ_e(x,0)\in H^1(\Om,\Real^3)$ for $0\leq i\leq k$. We say $(u_0,s_0)$ satisfies the compatibility condition at order $k$, if for any $j\in\{0,1,\dots,k\}$, there holds
\begin{equation}\label{com-cond}
\begin{cases}
\frac{\p V_j}{\p \nu}|_{\p \Om}=0,\\[1ex]
\frac{\p W_j}{\p \nu}|_{\p \Om}=0.
\end{cases}
\end{equation}
\end{defn}
\medskip
Intrinsically, we denote
\[\tau_{\Phi, s}(u)=\tau(u)-u\times(u\times (\tilde{h}+s)),\]
where $\tau(u)=\De u+|\n u|^2 u$ be the tension field. Then the first equation in \eqref{llgsp1} becomes
\[\p_t u=\al \tau_{\Phi,s}(u)-u\times \tau_{\Phi,s}(u).\]
And hence, after taking k times derivatives at direction $t$ for the above equation, $u_k$ satisfies the following equation
\[\p_t u_k=\p^k_t\p_t u=\al\p^k_t\tau_{\Phi,s}(u)-\sum_{i+j=k}C^{i}_{k}\p^i_tu\times \p^j_t\tau_{\Phi,s}(u),\]
where $C^i_{k}=\frac{i!}{k!(k-i)!}$. When $k\geq 1$, let $\tilde{V}_k=\p^{k-1}_t\tau_{\Phi,s}(u)(x,0)$, there holds
\[V_k=\al\tilde{V}_k-u_0\times \tilde{V}_k+\sum_{i+j=k, i\geq1}C^{i}_{k}V_i\times \tilde{V}_j,\]
for the sake of convenience, where we denote $\tilde{V}_0=V_0=u_0$. In particular, there holds
\[\tilde{V}_1=\tau(u_0)-u_0\times (u_0\times(\tilde{h}(u_0)+s_0)).\]
Therefore, it is not difficult to show that the $k$ order compatibility condition defined by \ref{def-comp} has the below equivalent characterization.
\begin{prop}
Let $k\in \mathbb{N}$, $(u_0,s_0)\in H^{2k+1}(\Om,\Real^3)$ and $\p^i_tJ_e(x,0)\in H^1(\Om,\Real^3)$ for $0\leq i\leq k$. $(u_0,s_0)$ satisfies the compatibility condition at k order if and only if for any $j\in\{0,1,\dots,k\}$, there holds
\begin{equation}\label{com-cond1}
\begin{cases}
\frac{\p \tilde{V}_j}{\p \nu}|_{\p \Om}=0,\\[1ex]
\frac{\p W_j}{\p \nu}|_{\p \Om}=0.
\end{cases}
\end{equation}
\end{prop}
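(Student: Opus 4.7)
I will argue both implications simultaneously by induction on $j\in\{0,1,\dots,k\}$. Because the $W_j$-conditions coincide in \eqref{com-cond} and \eqref{com-cond1}, the task reduces to establishing the equivalence
\[
\left.\frac{\p V_j}{\p \nu}\right|_{\p\Om}=0 \;\Longleftrightarrow\; \left.\frac{\p \tilde V_j}{\p \nu}\right|_{\p\Om}=0,
\]
under the running hypothesis that the corresponding equivalence (with common value zero) already holds for every index $i<j$. The base case $j=0$ is immediate, since $\tilde V_0=V_0=u_0$.

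For the inductive step I would start from the identity
\[
V_k=\al\tilde V_k-u_0\times\tilde V_k+\sum_{\substack{i+j=k\\ i\ge 1}}C^{i}_{k}\,V_i\times\tilde V_j
\]
displayed just before the statement, take the normal trace on $\p\Om$, and expand via Leibniz. The inductive hypothesis eliminates every $\frac{\p V_i}{\p\nu}$ with $1\le i\le k-1$ and every $\frac{\p\tilde V_j}{\p\nu}$ with $0\le j\le k-1$ (note $\tilde V_0=u_0=V_0$, hence $\frac{\p\tilde V_0}{\p\nu}=0$). Only the summand $(i,j)=(k,0)$ survives nontrivially and contributes $\frac{\p V_k}{\p\nu}\times u_0=-u_0\times\frac{\p V_k}{\p\nu}$; after rearrangement one obtains the pointwise boundary identity
\[
\frac{\p V_k}{\p \nu}+u_0\times\frac{\p V_k}{\p \nu}
=\al\,\frac{\p \tilde V_k}{\p \nu}-u_0\times\frac{\p \tilde V_k}{\p \nu}.
\]

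To close the induction I would leverage $|u_0|=1$ on $\p\Om$ and $\al>0$. Writing $A=\frac{\p V_k}{\p\nu}\big|_{\p\Om}$ and $B=\frac{\p \tilde V_k}{\p\nu}\big|_{\p\Om}$, the Euclidean inner product of the above identity with $A$, together with $A\cdot(u_0\times A)=0$, yields
\[
|A|^2=\al\,A\cdot B-A\cdot(u_0\times B),
\]
so $B=0$ forces $A=0$. Symmetrically, the inner product with $B$ together with $B\cdot(u_0\times B)=0$ yields
\[
\al|B|^2=A\cdot B+B\cdot(u_0\times A),
\]
so $A=0$ forces $B=0$ because $\al>0$. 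This gives the sought equivalence at level $j=k$ and completes the induction.

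The main subtlety to anticipate is exactly the reappearance of $\frac{\p V_k}{\p\nu}$ on the right-hand side through the summand $V_k\times\tilde V_0$; one cannot simply solve for $B$ in terms of $A$. The rescue is the pointwise invertibility of the two rotation-type operators $I+u_0\times\cdot$ and $\al I-u_0\times\cdot$, forced respectively by $|u_0|=1$ (which makes $u_0\times\cdot$ skew) and by the strict positivity of the Gilbert damping $\al$. A secondary bookkeeping point is to verify that the boundary traces manipulated above are well defined under $(u_0,s_0)\in H^{2k+1}(\Om,\Real^3)$; this follows by repeated application of the algebra property $H^m\cdot H^m\subset H^m$ for $m\ge 2$ (Lemma~\ref{algebra2}) to the multilinear expressions defining $V_j$ and $\tilde V_j$, together with the boundedness of $h_d$ (Lemma~\ref{es-h_d}).
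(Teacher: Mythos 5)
Your proof is correct, and it supplies exactly the argument the paper omits (the authors only assert that the equivalence "is not difficult to show"): induct on $j$ using the displayed identity relating $V_k$ and $\tilde V_k$, kill all lower-order normal traces, and invert the two operators $I+u_0\times\cdot$ and $\al I-u_0\times\cdot$ pointwise on $\p\Om$ via the skewness of the cross product. One tiny remark: the invertibility of $I+u_0\times\cdot$ needs only that $x\cdot(u_0\times x)=0$, not $|u_0|=1$, so your dot-product computations already contain everything required.
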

\medskip
\section{Regular solution}\label{s: reg-sol}
In this section, we consider the existence of short-time regular solution to \eqref{llgsp1}. To this end, we adopt the following equivalent equation
\begin{equation}\label{llgsp2}
\begin{cases}
\p_t u=\al\(\De u+|\n u|^2u-u\times (u\times(\tilde{h}+s))\)-u\times(h+s),\quad&\text{(x,t)}\in\Om\times \Real^+,\\[1ex]
\p_t s=-\mbox{div}\(A(\J(u))\n s+u\otimes J_e\)-D_0(x)\cdot s-D_0(x)\cdot s\times u, \quad&\text{(x,t)}\in\Om\times \Real^+,
\end{cases}
\end{equation}
with the initial-boundary condition \eqref{in-bd} and $\al>0$. Here,
\[-A(u)=D_0
\begin{pmatrix}
	1-\th u^2_1&-\th u_1u_2&-\th u_1u_3\\
	-\th u_2u_1&1-\th u^2_2&-\th u_2u_3\\
	-\th u_3u_1&-\th u_3u_2&1-\th u^2_3\\
\end{pmatrix}
\]
is a positively definite matrix of functions with
\[0<(1-\th|u|^2)D_0|\xi|^2\leq -\xi^T A(u)\xi\leq D_0|\xi|^2\]
for any vector $\xi$ in $\Real^3$ if $\th|u|^2< 1$,
$$\J(u)=\frac{\sqrt{1+\de}}{\sqrt{\de+|u|^2}}u$$
with $\de>0$ to be determined later, and
$$\tilde{h}(u)=h_d(u)-\n_u\Phi(\J(u)).$$

It should also be pointed out that the above $\Phi(u)$ has been extended to $\overline{B}_{\sqrt{1+\de}}(0)\subset \Real^3$. In fact, we extend $\Phi(z)$ by
 \begin{equation*}
 \tilde{\Phi}(z)=
 \left\{
 \begin{aligned}
 &\zeta(|z|^2)\Phi\left(\frac{z}{|z|}\right),\,\, &|z|^2>\delta_0,\\
 &0, \,\, &|z|^2\leq\delta_0,
 \end{aligned}\right.
 \end{equation*}
 where $\zeta(t):[0, 2]\to [0, 1]$ is a $C^\infty$-smooth function with $\zeta(t)\equiv 0$ on $[0, \, 2\delta_0]$ ($2\de_0<1$) and $\zeta(t)=1$ on $[1,2]$. It is easy to see that $\tilde{\Phi}$ is $C^\infty$-smooth on $\overline{B}_{\sqrt{1+\de}}(0)$. For the sake of simplicity, we still denote $\tilde{\Phi}$ by $\Phi$.

\subsection{Galerkin approximation and a priori estimates}\label{ss: app-estm}\

Let $H_n$ be the $n$-dimensional subspace of $L^2(\Om)$ defined in Section \ref{Ga-basis}, $P_n$ be the Galerkin projection. Next, we seek a solution $(u^{n}, s^n)$ in $H_{n}$ to the following Galerkin approximation equation associated to \eqref{llgsp2}, i.e.
\begin{equation}\label{app-llgsp}
\begin{cases}
\p_t u^n= P_n\(\al(\De u^n+|\n u^n|^2u^n-u^n\times (u^n\times(\tilde{h}+s^n)))-u^n\times(h+s^n)\),\\[1ex]%&\text{(x,t)}\in\Om\times \Real^+,\\[1ex]
\p_t s^n=P_n\(-\mbox{div}\(A(\J(u^n))\n s^n+u^n\times J_e\)-D_0(x)\cdot s^n-D_0(x)\cdot s^n\times u^n\), %&\text{(x,t)}\in\Om\times \Real^+,
\end{cases}
\end{equation}
with initial date $(u^n(\cdot,0),s^n(\cdot,0))=(u^n_0,s^n_0)$.

Let $u^{n}=\sum_{1}^{n}g^{n}_{i}(t)f_{i}(x)$ and $s^{n}=\sum_{1}^{n}\ga^{n}_{i}(t)f_{i}(x)$, $G^{n}(t)=\{g^{n}_{1}(t)\dots g^{n}_{n}(t),\ga^n_1(t)\dots \ga^n_n(t)\}$ be a vector-valued function. Then, a direct calculation shows that $G^{n}(t)$ satisfies the following ordinary differential equation
\begin{equation}\left\{
\begin{aligned}
\frac{\p G^{n}}{\p t}&=F(t,G^n),\\
G^{n}(0)&=(\<u_{0},f_{1}\>,\dots,\<u_{0},f_{n}\>,\<s_{0},f_{1}\>,\dots,\<s_{0},f_{n}\>),
\end{aligned}\right.
\end{equation}
where $F(G^n)$ is locally Lipschitz continuous with respect to  $G^n$, since $\J(f)$ is locally Lipschitz on $f$. Hence, there exist a solution $(u^{n},s^n)$ to \eqref{app-llgsp} on $\Om\times[0,T^n_{0})$ for some $T^n_{0}>0$.

If we choose $(u^n, s^n)$ as a test function to multiply the two sides of equation \eqref{app-llgsp}, then it is easy to see that there hold
\begin{equation}\label{L^2-u}
\frac{1}{2}\frac{\p}{\p t}\int_{\Om}|u^n|^2dx+\al\int_{\Om}|\n u^n|^2dx=\al\int_{\Om}|\n u^n|^2|u^n|^2dx
\end{equation}
and
\begin{equation}\label{L^2-s}
\begin{aligned}
&\frac{1}{2}\frac{\p}{\p t}\int_{\Om}|s^n|^2dx+(1-\th(1+\de))\int_{\Om}D_0|\n s^n|^2dx\\
\leq& \int_{\Om}\<u^n\otimes J_e,\n s^n\>dx-\int_{\p \Om}u^n\cdot s^n\<J_e,\nu\>d\mu_{\p \Om}\\
\leq &C(\ep,c_0)|u^n|^2_{L^{\infty}}(\int_{\Om}|J_e|^2dx+\int_{\p\Om}|J_e|^2d\mu_{\p \Om})\\
&+\ep(1+\frac{\tilde{C}}{c_0})\int_{\Om}D_0|\n s^n|^2dx+\tilde{C}\ep \int_{\Om}|s^n|^2dx.
\end{aligned}
\end{equation}
Here, we have used the following fact
\begin{align*}
	\int_{\Om}\<A(\J(u^n))\n s^n,\n s^n\>dx&=\int_{\Om}D_0|\n s^n|^2dx-(1+\de)\th\int_{\Om}D_0\frac{\<\n s^n,u^n\>^2}{\de+|u^n|^2}dx\\
	                                                                 &\geq (1-(1+\de)\th)\int_{\Om} D_0|\n s^n|^2dx,
\end{align*}
and the Trace theorem to derive
\begin{align*}
\int_{\p\Om}|s^n|^2dx\leq \tilde{C}(\int_{\Om}|\n s^n|^2dx+\int_{\Om}|s^n|^2dx),
\end{align*}
$$\int_{\p\Om}|J_e|^2dx\leq \tilde{C}(\int_{\Om}|\n J_e|^2dx+\int_{\Om}|J_e|^2dx).$$
Therefore, by choosing $\de<1-\frac{1}{\th}$ and then suitable $\ep>0$, we have
\begin{equation}\label{L^2-us}
\begin{aligned}
&\frac{1}{2}\frac{\p}{\p t}\int_{\Om}|u^n|^2+|s^n|^2dx+\al\int_{\Om}|\n u^n|^2dx+\frac{1}{2}(1-\th(1+\de))\int_{\Om}D_0|\n s^n|^2dx\\
\leq &\al|u^n|^2_{L^\infty}\int_{\Om}|\n u^n|^2dx+C(\de,\th, c_0)\norm{J_e}^2_{H^1}|u^n|^2_{L^{\infty}}+C(\de,\th,c_0) \int_{\Om}|s^n|^2dx\\
\leq &C(\al, \de, \th, c_0)(1+\norm{J_e}^2_{H^1}) (U^2+U+S).
\end{aligned}
\end{equation}
Here we denote $U=\norm{u^n}^2_{H^2}$ and $S=\norm{s^n}^2_{H^2}$.
\medskip

In order to get the $H^{3}$-energy estimates, we choose $(v,w)=(\De^2 u^{n},\De^2 s^n)$ as the test function. By using integration by parts, we take a simple computation to derive
\begin{equation}\label{H^3-u}
\begin{aligned}
&\frac{1}{2}\frac{\p}{\p t}\int_{\Om}|\De u^n|^2dx+\al\int_{\Om}|\n \De u^n|^2dx\\
=&-\al\int_{\Om}\<\n(|\n u^n|^2u^n),\n \De u^n\>dx+\al\int_{\Om}\<\n(u^n\times(u^n\times(\tilde{h}+s^n))),\n \De u^n\>dx\\
&+\int_{\Om}\<\n(u^n\times(h+s^n)),\n \De u^n\>dx\\
= &I+II+III
\end{aligned}
\end{equation}
and
\begin{equation}\label{H^3-s}
\begin{aligned}
\frac{1}{2}\frac{\p}{\p t}\int_{\Om}|\De s^n|^2dx
=&-\int_{\Om}\<\mbox{div}\(A(\J(u^n))\n s^n+u^n\otimes J_e\),\De^2 s^n\>dx\\
&+\int_{\Om}\<\n(D_0\cdot s^n+D_0\cdot s^n\times u^n),\n\De s^n\>dx\\
=&IV+V.
\end{aligned}
\end{equation}

By direct calculations, we have
\begin{equation}
\begin{aligned}\label{I}
	|I|=&\al|\int_{\Om}\<\n(|\n u^n|^2u^n),\n \De u^n\>dx|\\
\leq& 2\al\int_{\Om}|\n^2 u^n||\n u^n||u^n||\n \De u^n|dx + 2\al\int_{\Om}|\n u^n|^3|\n \De u^n|dx\\
	\leq& 2\al\norm{u^n}_{L^\infty}\norm{\n u^n}_{L^6}\norm{\n^2u^n}_{L^3}\norm{\n \De u^n}_{L^2}+2\al\norm{\n u^n}^3_{L^{6}}\norm{\n \De u^n}_{L^2}\\
	=&2\al (I_1+I_2).
\end{aligned}
\end{equation}
For the above two terms $I_1$ and $I_2$, there hold
\begin{align*}
I_1=&\norm{u^n}_{L^\infty}\norm{\n u^n}_{L^6}\norm{\n^2u^n}_{L^3}\norm{\n \De u^n}_{L^2}\\
\leq &C\norm{u^n}^2_{H^2}\norm{\n^2 u}^\frac{1}{2}_{L^2}\norm{\n^2 u}^\frac{1}{2}_{L^6}\norm{\n \De u^n}_{L^2}\\
\leq& C\norm{u^n}^{2+1/2}_{H^2}\norm{u^n}^\frac{1}{2}_{H^3}\norm{\n \De u^n}_{L^2}\\
\leq& C\norm{u^n}^{3}_{H^2}\norm{\n \De u^n}_{L^2}+C\norm{u^n}^{2+1/2}_{H^2}\norm{\n \De u^n}^{3/2}_{L^2}\\
\leq& \ep \norm{\n \De u^n}^{2}_{L^2} +C(\ep)(\norm{u^n}^{6}_{H^2}+\norm{u^n}^{10}_{H^2})\\
\leq&\ep \norm{\n \De u^n}^{2}_{L^2} +C(\ep)(U^3+U^5)
\end{align*}
and
\begin{align*}
I_2=&\norm{\n u^n}^3_{L^{6}}\norm{\n \De u^n}_{L^2}\\
\leq &C(\ep)\norm{u^n}^6_{H^{2}}+\ep \norm{\n \De u^n}^2_{L^2}\\
\leq &C(\ep)U^3+\ep \norm{\n \De u^n}^2_{L^2}.
\end{align*}
Here we have used the following facts
$$H^{2}(\Om)\hookrightarrow W^{1,6}(\Om)\hookrightarrow L^{\infty}(\Om),$$
and
$$\norm{f}_{L^{3}}(\Om)\leq \norm{f}^{1/2}_{L^2(\Om)}\norm{f}^{1/2}_{L^6(\Om)}.$$

For the term $II$, there holds
\begin{equation}\label{II}
\begin{aligned}
|II|\leq& \int_{\Om}|\n u^n||u^n|(|h_d(u^n)|+|\n_u\Phi(\J(u^n))|+|s^n|)|\n \De u^n|dx\\
&+\int_{\Om}|u^n|^2(|\n h_d(u^n)| + |\n^2_u\Phi(\J(u^n))||\n \J(u^n)|+|\n s^n|)|\n \De u^n|dx\\
=&II_1+II_2.
\end{aligned}
\end{equation}

By direct calculations, we have
\begin{align*}
II_1\leq& \norm{u^n}_{L^\infty}\norm{\n u^n}_{L^6}(\norm{h_d(u^n)}_{L^{3}}+\norm{\n_u \Phi(\J(u^n))}_{L^3}+\norm{s^n}_{L^3})\norm{\n\De u^n}_{L^2}\\
\leq& C(\ep, \Phi)\norm{u^n}^4_{H^2}(\norm{u^n}^2_{H^2}+\norm{s^n}^2_{H^1}+1)+\ep\norm{\n \De u^n}^2_{L^2}\\
\leq&C(\ep, \Phi)U^2(U+S+1)+\ep\norm{\n \De u^n}^2_{L^2},
\end{align*}
and
\begin{align*}
II_2\leq & \norm{u^n}^2_{L^\infty}(\norm{\n h_d(u^n)}_{L^2}+|\n^2_{u}\Phi(\J(u^n))|_{L^\infty}\sqrt{1+1/\de}\norm{\n u^n}_{L^2}\\
&+\norm{\n s^n}_{L^2})\norm{\n \De u^n}_{L^2}\\
\leq &C(\ep, \de, \Phi)\norm{u^n}^4_{H^2}(\norm{u^n}^2_{H^1}+\norm{s^n}^2_{H^1})+\ep\norm{\n \De u^n}^2_{L^2}\\
\leq &C(\ep, \de, \Phi)U^2(U+S)+\ep\norm{\n \De u^n}^2_{L^2}.
\end{align*}

For the term $III$, we can show
\begin{equation}\label{III}
\begin{aligned}
|III|\leq& \int_{\Om}|\n u^n|(|\De u^n|+|h_d(u^n)|+|\n_u\Phi(\J(u^n))|+|s^n|)|\n \De u^n|dx\\
&+\int_{\Om}|u^n|(|\n h_d(u^n)|+|\n^2\Phi(\J(u^n))||\n u^n|+|\n s^n|)|\n \De u^n|dx\\
\leq &C(\ep, \de, \Phi)(\norm{u^n}^6_{H^2}+\norm{u^n}^4_{H^2}+\norm{u^n}^2_{H^2}\norm{s^n}^2_{H^2}+1)+2\ep \norm{\n \De u^n}^2_{L^2}\\
\leq& C(\ep, \de, \Phi)(U^3+U^2+US+1)+2\ep \norm{\n \De u^n}^2_{L^2}
\end{aligned}
\end{equation}
Here, we have canceled the term $\<u\times\n\De u, \n\De u\>=0$.

By combining inequalities \eqref{I},\eqref{II} and \eqref{III}, and choosing suitable $\ep$, we have
\begin{equation}\label{new-H^3-u}
\frac{1}{2}\frac{\p}{\p t}\int_{\Om}|\De u^n|^2dx+\frac{\al}{2}\int_{\Om}|\n \De u^n|^2dx\leq C(\de, \al,\Phi)(U+S+1)^5.
\end{equation}

On the other hand, for equation \eqref{H^3-s}, we have
\begin{equation}\label{IV}
\begin{aligned}
|IV|=&\int_{\Om}\<D_0\De s^n,\De^2 s^n\>dx-\th\int_{\Om}\<\n D_0 \cdot(\J(u^n)\otimes(\n s^n\cdot \J(u^n))),\De^2 s^n\>dx\\
&+\int_{\Om}\<\n D_{0}\cdot \n s^n,\De^2 s^n\>dx-\th\int_{\Om}\<D_0 \n \J(u^n)\otimes(\n s^n\cdot \J(u^n)),\De^2 s^n\>dx\\
&-\th\int_{\Om}\<D_0 \J(u^n)\otimes(\De s^n\cdot \J(u^n)),\De^2 s^n\>dx-\int_{\Om}\<\n u^n\cdot J_e,\De^2 s^n\>dx\\
&-\th\int_{\Om}\<D_0 \J(u^n)\otimes(\n s^n\cdot \n \J(u^n)),\De^2 s^n\>dx-\int_{\Om}\<u^n\mbox{div}J_e,\De^2 s^n\>dx\\
=&IV_1+IV_2+IV_3+IV_4+IV_5+IV_6+IV_7+IV_8.
\end{aligned}
\end{equation}
By direct calculations, there hold
\begin{equation}\label{IV_1}
\begin{aligned}
IV_1\leq &-\int_{\Om}D_0|\n \De s^n|^2dx+C(\ep,\th,c_0)|\n D_0|^2_{L^\infty}S+\ep \th \int_{\Om}D_{0}|\n \De s^n|^2dx,\\
IV_2\leq& C(\ep, \de, c_0)\th(|\n^2 D_{0}|^2_{L^{\infty}}+|\n D_0|^2_{L^\infty})(U+S+SU)+\ep\th\int_{\Om}D_{0}|\n \De s^n|^2dx,\\
IV_3\leq &C(\ep,\th,c_0)(|\n^2 D_{0}|^2_{L^{\infty}}+|\n D_0|^2_{L^\infty})S+\ep \th \int_{\Om}D_{0}|\n \De s^n|^2dx,\\
IV_4\leq &\th \int_{\Om}|\n D_0||\n \J(u^n)||\n s^n||\J(u^n)||\n \De s^n|dx\\
&+\th \int_{\Om}|D_0||\n^2 \J(u^n)||\n s^n||\J(u^n)||\n \De s^n|dx\\
& +\th \int_{\Om}D_0|\n \J(u^n)||\De s^n||\J(u^n)||\n \De s^n|dx\\
&+\th \int_{\Om}D_0|\n \J(u^n)|^2|\n s^n||\n \De s^n|dx\\
\leq& C(\de, \ep,c_0)(|\n D_0|^2_{L^\infty}+|D_0|_{L^\infty})(US^2+US+U^2S)\\
&+\ep\th \norm{\n \De u^n}^2_{L^2}+\ep\th \int_{\Om}D_{0}|\n \De s^n|^2dx,\\
 IV_5\leq&C(\de)\th|\n D_{0}|_{L^\infty}\int_{\Om}|\De s^n||\n \De s^n|dx+\th C(\de)\int_{\Om}D_0|\n u^n||\De s^n||\n \De s^n|dx\\
&+\th \int_{\Om}|\J(u^n)\cdot \n \De s^n|^2dx\\
\leq& (\ep+1+\de) \th\int_{\Om}D_{0}|\n \De s^n|^2dx+C(\de,\ep, c_0)\norm{D_0}_{C^1}\th(S+US+U^2S),\\
 IV_6+IV_8\leq& C(\ep, c_0)\th \norm{J_e}^2_{H^2}U+\ep\th  \int_{\Om}D_{0}|\n \De s^n|^2dx.
\end{aligned}
\end{equation}
In addition, $IV_7$ has the same estimates as $IV_{4}$.

Finally, we show the estimate of term $V$ as follows
 \begin{equation}\label{V}
\begin{aligned}
|V|\leq C(\ep,\th,c_0)\norm{D_0}_{C^1}(S+US)+\ep \th \int_{\Om}D_{0}|\n \De s^n|^2dx.
\end{aligned}
\end{equation}
Substituting inequalities \eqref{IV_1} and \eqref{V} into the formula \eqref{H^3-s}, by choosing suitable $\ep$, there holds
\begin{equation}\label{new-H^3-s}
\begin{aligned}
&\frac{1}{2}\frac{\p}{\p t}\int_{\Om}|\De s^n|^2dx+\frac{1}{2}(1-(1+\de)\th)\int_{\Om}D_0|\n\De s^n|^2dx\\
\leq &C(\de,\th,c_0, \norm{D_0}_{C^2}, \norm{J_e}_{H^2})(U+S+1)^2.
\end{aligned}
\end{equation}

Therefore, by combining \eqref{L^2-us}, \eqref{new-H^3-u} and \eqref{new-H^3-s}, we get
\begin{equation}\label{H^3-us}
\begin{aligned}
&\frac{1}{2}\frac{\p}{\p t}(U+S)+\frac{\al}{2}\int_{\Om}|\n \De u^n|^2dx+\frac{1}{2}(1-(1+\de)\th) \int_{\Om}D_0|\n\De s^n|^2dx\\
\leq &C(\al,\de,\th,c_0, \Phi,\norm{D_0}_{C^2},\norm{J_{e}}_{H^2})(U+S+1)^5.
\end{aligned}
\end{equation}
Here we should point out that $\de$ is a fixed positive number such that $\de<\frac{1}{\th}-1$.

By using the comparison theorem of ODE in \ref{c-thm}, the desired estimates of approximated solution $(u^{n},s^n)$ are obtained from \eqref{H^3-us}. Hence, we conclude
\begin{lem}\label{es-us}
Let $(u_{0},s_0)\in H^{2}(\Om,\Real^3)$. Suppose $J_e\in L^\infty(\Real^+,H^2(\Om,\Real^3))$ and $D_0\in C^2(\bar{\Om})$ with $D_0\geq c_0>0$ for some constant $c_0$. Then there exists a solution $(u^{n},s^n) \in L^{\infty}([0,T^*),H^{2}(\Om,\Real^{3}))\cap W^{3,1}_{2}(\Om\times[0,T^*),\Real^{3})$ to system \eqref{app-llgsp}, where $T^*$ is dependent on  $\norm{u_0}_{H^2(\Om)}+\norm{s_0}_{H^2(\Om)}$. Moreover, for any $T< T^*$, there exists a constant $C(T)$ independent on $(u^{n},s^n)$, such that the following a priori estimate holds
\begin{equation}\label{inq-us}
\begin{aligned}
&\sup_{0< t\leq T}\(\norm{u^n}_{H^2}+\norm{s^n}_{H^2}\)+\al\int_{0}^{T}\norm{\n \De u^{n}}^2_{L^2}dt\\
&\quad+(1-(1+\de)\th)c_0\int_{0}^{T}\norm{\n \De s^{n}}^2_{L^2}dt\leq C(T).
\end{aligned}
\end{equation}
Moreover, the above estimate and equation \eqref{app-llgsp} follows
$$\sup_{0< t\leq T}(\norm{\p_t u^n}^2_{L^2}+\norm{\p_t s^n}^2_{L^2})+\int_{0}^{T}(\norm{\n \p_tu^{n}}^2_{L^2}+\norm{\n \p_ts^{n}}^2_{L^2})dt\leq C(T).$$
\end{lem}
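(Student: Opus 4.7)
The strategy is to package the work already done in the excerpt. The existence portion is routine ODE theory, the uniform bound follows from the differential inequality \eqref{H^3-us} already derived above together with the comparison lemma, and the time-derivative estimate is read directly from the equation \eqref{app-llgsp} using the projection bound in Lemma \ref{es-P_n}.

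\textbf{Step 1: Local ODE existence.} First I would note that \eqref{app-llgsp}, once the unknowns are expanded in the basis $\{f_i\}$, is a system of ODEs for $G^n(t)$ with right-hand side $F(t,G^n)$ depending continuously on $t$ (through $J_e$) and locally Lipschitz in $G^n$. The only nontrivial point is the nonlinear rational dependence through $\J$, but since $\J$ is smooth on its domain and $\Phi$ has been globally extended to $\overline{B}_{\sqrt{1+\de}}(0)$, the composition is smooth. Cauchy--Lipschitz produces a unique maximal solution on some interval $[0,T^n_0)$ in $H_n\times H_n$.

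\textbf{Step 2: Uniform lifespan via comparison.} The heart of the argument is to compare the scalar function $y(t)=U(t)+S(t)$, where $U=\|u^n\|_{H^2}^2$ and $S=\|s^n\|_{H^2}^2$, with the solution of the Cauchy problem $z'=2C(z+1)^5$, $z(0)=\|u_0\|_{H^2}^2+\|s_0\|_{H^2}^2$. By \eqref{H^3-us} the function $y$ satisfies $y'\leq 2C(y+1)^5$ with $C$ independent of $n$, so Lemma \ref{c-thm} gives $y(t)\leq z(t)$ on the maximal interval $[0,T^*)$ of $z$, and $T^*$ depends only on $\|u_0\|_{H^2}+\|s_0\|_{H^2}$. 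A standard continuation argument then extends $(u^n,s^n)$ to all of $[0,T^*)$: the finite-dimensional ODE cannot blow up before $T^*$ because the $H^2$ norm stays bounded there. Integrating \eqref{H^3-us} over $[0,T]$ for any $T<T^*$ and combining with Lemma \ref{eq-norm} (applicable since each $f_i$ satisfies the Neumann condition, hence so do $u^n$ and $s^n$) yields the claimed estimate \eqref{inq-us}.

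\textbf{Step 3: Time-derivative estimate.} From \eqref{app-llgsp} directly, write $\p_t u^n=P_n F^n$ and $\p_t s^n=P_n G^n$ where $F^n,G^n$ are the indicated nonlinearities. To control $\sup_{t\leq T}\|\p_t u^n\|_{L^2}^2+\|\p_t s^n\|_{L^2}^2$, I would estimate the $L^2$-norms of $F^n$ and $G^n$ termwise: each summand is a product of factors controlled in $H^2\hookrightarrow L^\infty$, together with $\De u^n$ or $\De s^n\in L^\infty_t L^2_x$ and $h_d(u^n)$ (bounded in $L^\infty_t H^2_x$ by Lemma \ref{es-h_d}); $P_n$ is an $L^2$-contraction. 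For the gradient bound, Lemma \ref{es-P_n}(1) gives $\|\n\p_t u^n\|_{L^2}\leq\|F^n\|_{H^1}$, and similarly for $s^n$; in $\|F^n\|_{H^1}$ and $\|G^n\|_{H^1}$ the top-order terms are $\|\n\De u^n\|_{L^2}$ and $\|\n\De s^n\|_{L^2}$, both in $L^2([0,T])$ by Step 2, while every remaining contribution is algebraically controlled in $H^2$ via Lemma \ref{algebra2} and Lemma \ref{es-h_d}.

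\textbf{Anticipated main difficulty.} There is no single hard step, but the subtle point that must be checked carefully is that the constant $C$ appearing in the master inequality \eqref{H^3-us} is genuinely independent of $n$. This is essential for $T^*$ to serve later as the common lifespan of the Galerkin sequence and hence of the limit. Tracing back through the estimates $I$--$V$ derived above one sees that only Sobolev embeddings, the algebra property of $H^2$, the bound $|\J(u)|\le\sqrt{1+\de}$, and the operator estimates for $h_d$ and $P_n$ are used; no constant depends on $n$ at any stage. The choice $\de<\frac{1}{\theta}-1$ ensures that the principal dissipative coefficient $(1-(1+\de)\theta)c_0$ stays strictly positive, keeping the parabolic structure under control uniformly in $n$.
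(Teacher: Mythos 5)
Your proposal follows the paper's proof essentially verbatim: local existence by Cauchy--Lipschitz for the Galerkin ODE, the comparison Lemma \ref{c-thm} applied to $y=U+S$ against the blow-up ODE $z'=C(z+1)^5$ to get a lifespan $T^*$ depending only on $\norm{u_0}_{H^2}+\norm{s_0}_{H^2}$, and the time-derivative bounds read off from \eqref{app-llgsp} using Lemma \ref{es-P_n}. The only slip is your choice $z(0)=\norm{u_0}_{H^2}^2+\norm{s_0}_{H^2}^2$: since $y(0)=\norm{P_nu_0}_{H^2}^2+\norm{P_ns_0}_{H^2}^2$ is only bounded by $C(\norm{u_0}_{H^2}^2+\norm{s_0}_{H^2}^2)$ with $C$ possibly exceeding $1$ (Lemma \ref{es-P_n}(2)), you must take $z(0)=C(\norm{u_0}_{H^2}^2+\norm{s_0}_{H^2}^2)$ as the paper does so that $y(0)\leq z(0)$ holds.
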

\begin{proof}
Let $y(t)=U(t)+S(t)$, then the estimate \eqref{H^3-us} implies $y$ satisfies the below ODE inequality:
\begin{equation*}
\begin{cases}
y^\prime\leq C(y+1)^5,\\[1ex]
y(0)=\norm{u^n_{0}}^2_{H^2}+\norm{s^n_0}^2_{H^2}\leq C(\norm{u_{0}}^2_{H^2}+\norm{s_0}^2_{H^2}).
\end{cases}
\end{equation*}
Here the constant $C$ is dependent on $\norm{D_0}_{C^2}$ and $\norm{J_e}_{L^\infty(\Real^+,H^2(\Om))}$.
If we let $z: [0,T^*)\to \Real$ be the maximal solution to
\begin{equation*}
\begin{cases}
z^\prime= C(z+1)^5,\\[1ex]
z(0)=C(\norm{u_{0}}^2_{H^2}+\norm{s_0}^2_{H^2}),
\end{cases}
\end{equation*}
where $T^*$ is dependent only on $\norm{u_{0}}^2_{H^2}+\norm{s_0}^2_{H^2}$. Thus, Lemma \ref{c-thm} shows
$$\sup_{0<t\leq T}(U(t)+S(t))\leq C(T).$$
Moreover, by considering estimate \eqref{H^3-us}, it implies \eqref{inq-us}. On the other hand, by using equation \eqref{app-llgsp}, it is not difficult to show the following estimate
$$\sup_{0< t\leq T}(\norm{\p_t u^n}^2_{L^2}+\norm{\p_t s^n}^2_{L^2})+\int_{0}^{T}(\norm{\n \p_tu^{n}}^2_{L^2}+\norm{\n \p_ts^{n}}^2_{L^2})dt\leq C(T).$$
\end{proof}

\subsection{Regular solutions to LLG system with spin-polarized transport}\label{ss: reg-sol}\

In this subsection, we consider the compactness of the approximation solution $(u^{n},s^n)$ to \eqref{app-llgsp} constructed in the above. The main tool to achieve the compactness is the well-known Alaoglu's theorem and the Aubin-Simons's compactness (see Lemma \ref{A-S} in Section \ref{s: pre}). Thus, Lemma \ref{es-us}  implies that there exists a subsequence of $\{(u^{n},s^n)\}$, we still denote it by $\{(u^{n},s^n)\}$, and a $(u,s) \in L^{\infty}([0,T],H^{2}(\Om,\Real^{3}))\cap W^{3,1}_{2}(\Om\times[0,T],\Real^{3})$ such that
$$(u^n,s^n)\rightharpoonup (u,s),\,\,\text{weakly* in}\, L^{\infty}([0,T],H^{2}(\Om,\Real^{3})),$$
$$(u^n,s^n)\rightharpoonup (u,s),\,\,\,\text{weakly in}\,W^{3,1}_{2}(\Om\times[0,T],\Real^{3}),$$
where $0<T<T^*$. Next, let $X=H^{3}(\Om,\Real^{3})$, $B=H^{2}(\Om,\Real^{3})$ and $Y=L^{2}(\Om,\Real^{3})$. Then, Lemma \ref{A-S} tells us
$$(u^n,s^n)\rightarrow (u,s),\,\,\,\text{strongly in}\, L^{p}([0,T], H^{2}(\Om,\Real^{3})),$$
for any $p<\infty$. It follows that $u$ is a strong solution to equation \eqref{llgsp2}.
\begin{theorem}\label{reg-solu}
The limiting map $(u,s)\in L^{\infty}([0,T],H^{2}(\Om,\Real^{3}))\cap W^{3,1}_{2}(\Om\times[0,T],\Real^{3})$ is a locally strong solution to \eqref{llgsp2} for any $0<T<T^*$. Moreover, there exists a constant $C(T)$ such that the following estimate holds
\begin{equation}\label{inq-us1}
\begin{aligned}
&\sup_{0< t\leq T}\(\norm{(u,s)}^2_{H^2}+\norm{(\p_t u,\p_t s)}^2_{L^2}\)+\int_{0}^{T}\norm{(u,s)}^2_{H^3}dt\\
&\quad+\int_{0}^{T}\norm{(\n \p_t u,\n \p_t s)}^2_{L^2}\leq C(T).
\end{aligned}
\end{equation}
\end{theorem}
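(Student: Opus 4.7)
The plan is to combine the uniform estimates of Lemma \ref{es-us} with the Banach--Alaoglu theorem and the Aubin--Simon compactness lemma to extract a limit, and then pass to the limit in the weak formulation of the Galerkin system \eqref{app-llgsp}. By Lemma \ref{es-us}, the sequence $(u^n, s^n)$ is uniformly bounded in $L^\infty([0,T], H^2(\Om,\Real^3))$ and in $L^2([0,T], H^3(\Om,\Real^3))$ (via the equivalent norm of Lemma \ref{eq-norm}), while $(\p_t u^n, \p_t s^n)$ is uniformly bounded in $L^\infty([0,T], L^2) \cap L^2([0,T], H^1)$. Alaoglu's theorem yields a subsequence and a limit $(u,s)$ with weak-$*$ convergence in $L^\infty([0,T], H^2)$ and weak convergence in $L^2([0,T], H^3)$. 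Applying Lemma \ref{A-S} with $X = H^3$, $B = H^2$, $Y = L^2$ (here $q$ may be taken arbitrarily large in $L^q([0,T], H^2)$), we upgrade this to strong convergence
\[(u^n, s^n) \to (u, s) \quad \text{in } L^p([0,T], H^2(\Om, \Real^3))\]
for every $p < \infty$. In particular, by Sobolev embedding $H^2 \hookrightarrow W^{1,6} \cap L^\infty$, we get strong convergence of $u^n$, $\n u^n$, $s^n$, $\n s^n$ in the Lebesgue spaces needed to handle every nonlinear term.

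Next I would pass to the limit in the weak formulation. Fix $N$ and a test function $\phi \in H_N$; for all $n \geq N$, $P_n \phi = \phi$, so \eqref{app-llgsp} yields an identity of the form $\langle \p_t u^n, \phi\rangle = \langle (\text{RHS}), \phi\rangle$ with the projection gone. The strong $L^p([0,T], H^2)$ convergence, combined with Lemma \ref{es-h_d} and the smoothness of $\J$ and $\Phi$ on $\overline{B}_{\sqrt{1+\de}}(0)$, lets each nonlinear term converge: $|\n u^n|^2 u^n \to |\n u|^2 u$ in $L^p([0,T], L^2)$, $u^n \times (u^n \times (\tilde h(u^n) + s^n)) \to u \times (u \times (\tilde h(u) + s))$ likewise, and the quasilinear elliptic operator $\mathrm{div}(A(\J(u^n)) \n s^n)$ converges weakly in $L^2([0,T], H^{-1})$ because $A(\J(u^n)) \to A(\J(u))$ strongly in $L^\infty$ (using $H^2 \hookrightarrow L^\infty$) and $\n s^n \rightharpoonup \n s$ weakly in $L^2([0,T], H^2)$. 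Letting $n \to \infty$ at fixed $N$ and then using that $\bigcup_N H_N$ is dense in any reasonable space of test functions yields that $(u,s)$ satisfies \eqref{llgsp2} in the distributional sense. Since $u,s \in L^2([0,T], H^3)$ and $\p_t(u,s) \in L^2([0,T], L^2)$, the equation actually holds a.e.\ in $\Om \times [0,T]$, so $(u,s)$ is a strong solution.

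The initial condition $(u,s)(0) = (u_0, s_0)$ follows from the regularity $(u,s) \in C([0,T], L^2)$ (coming from the $L^\infty H^2$ plus $L^2 H^1$ time derivative), combined with $(u^n(0), s^n(0)) = (P_n u_0, P_n s_0) \to (u_0, s_0)$ in $L^2$. The homogeneous Neumann boundary condition is inherited from the Galerkin basis $\{f_i\}$ (every $f_i$ satisfies $\p_\nu f_i = 0$, hence so does $(u^n, s^n)$), and passes to the limit via trace continuity from the strong $H^2$-convergence. Finally, the a priori estimate \eqref{inq-us1} is obtained by taking $\liminf_{n\to\infty}$ on both sides of \eqref{inq-us} and using the weak-$*$ lower semicontinuity of the $L^\infty H^2$ norm, the weak lower semicontinuity of the $L^2 H^3$ and $L^2 H^1$ norms for the time derivatives, together with the equivalent norm in Lemma \ref{eq-norm} to convert $\n \De$ bounds into $H^3$ bounds on $(u,s)$.

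The main obstacle is a subtle one: correctly passing to the limit in the term $P_n(\cdots)$, because $P_n$ acts on the nonlinearity rather than on the test function. The standard remedy (which I would use) is duality: since $P_n$ is self-adjoint on $L^2$, one writes $\langle P_n(F^n), \phi\rangle = \langle F^n, P_n \phi\rangle$, and for $\phi$ in a fixed finite-dimensional subspace $H_N$ one has $P_n \phi = \phi$ once $n \geq N$. Then density of $\bigcup_N H_N$ in test function spaces finishes the argument. A secondary technical point is ensuring $|\J(u^n)|^2 \theta < 1$ uniformly so that the quasilinear operator $\mathrm{div}(A(\J(u^n)) \n s^n)$ remains uniformly elliptic throughout the limit; this is built into the construction of $\J$ with the parameter $\de < 1/\theta - 1$ fixed in Section \ref{ss: app-estm}.
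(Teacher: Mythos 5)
Your proposal is correct and follows essentially the same route as the paper: uniform bounds from Lemma \ref{es-us}, Alaoglu plus the Aubin--Simon lemma with $X=H^3$, $B=H^2$, $Y=L^2$ to get strong $L^p([0,T],H^2)$ convergence, passage to the limit against test functions in a fixed $H_N$ (using $P_n\phi=\phi$ for $n\geq N$ and density), recovery of the Neumann condition in the limit, and lower semicontinuity for the estimate \eqref{inq-us1}. The only cosmetic difference is that the paper verifies the boundary condition by passing to the limit in the integration-by-parts identity $\int\<\De u^n,\xi\>=-\int\<\n u^n,\n\xi\>$ rather than by trace continuity, which is an equivalent argument.
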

\begin{proof}
For any $\varphi\in C^{\infty}(\bar{\Om}\times [0,T])$ and $k\in \mathbb{N}$, we set $\varphi_{k}=P_{k}(\varphi)=\sum_{i=1}^{k}g_{i}(t)f_{i}$, where $g_{i}(t)=\<\varphi,f_{i}\>_{L^{2}(\Om)}$. Thus, $\norm{\varphi-\varphi_{k}}_{L^{\infty}([0,T],L^{2}(\Om))}\to 0$ as $k\to \infty$.

We firstly fix $k$ and let $n\geq k$. Since $(u^{n},s^n)$ is a locally strong solution of \eqref{app-llgsp}, it follows	
\begin{align*}
&\int_{0}^{T}\int_{\Om}\<\p_t u^n,\varphi_k\>dxdt-\al \int_{0}^{T}\int_{\Om}\<\De u^n+|\n u^n|u^n,\varphi_k\>dxdt\\
=&-\al \int_{0}^{T}\int_{\Om}\< u^n\times(u^n\times (\tilde{h}+s^n)),\varphi_k\>dxdt-\al \int_{0}^{T}\int_{\Om}\<u^n\times(h+s^n),\varphi_k\>dxdt
\end{align*}
and
\begin{align*}
&\int_{0}^{T}\int_{\Om}\<\p_t s^n,\varphi_k\>dxdt+\int_{0}^{T}\int_{\Om}\<\mbox{div}\(A(\J(u^n)\n s^n)+u^n\times J_e\),\varphi_k\>dxdt\\
=&-\int_{0}^{T}\int_{\Om}\<D_0(x)\cdot s^n+D_0(x)\cdot s^n\times u^n,\varphi_k\>dxdt.
\end{align*}
The above conclusions on compactness tell us that, for any $1\leq p<\infty$,
$$(\p_t u^n,\p_t s^n)\rightharpoonup (\p_tu, \p_t s)\quad\text{weakly in}\, L^{2}([0,T],L^{2}(\Om,\Real^{3})),$$
and
$$(u^n,s^n)\rightarrow (u,s)\quad\text{strongly in}\,\, L^{p}([0,T], H^{2}(\Om,\Real^{3})).$$
It follows
 \begin{itemize}
 	\item $h_d(u^{n})\rightarrow h_d(u)\quad\text{strongly in}\,\, L^{p}([0,T], H^{2}(\Om,\Real^{3}))$,
 	\item $(u^{n},s^n)\rightarrow (u,s)\quad\text{strongly in}\,\, L^p([0,T],L^\infty(\Om,\Real^3))$,
 	\item $u^n\times h(u^n)\to u\times h(u) \quad\text{strongly in}\,\, L^{2}([0,T], L^2(\Om,\Real^{3}))$,
 	\item $u^n\times(u^n\times \tilde{h}(u^n))\to u\times(u\times \tilde{h}(u)) \quad\text{strongly in}\,\, L^{2}([0,T], L^2(\Om,\Real^{3}))$,
 	\item $\mbox{div}(A(u^n)\n s^n)\to\mbox{div}(A(u)\n s) \quad\text{strongly in}\,\, L^{2}([0,T], L^2(\Om,\Real^{3}))$.
 \end{itemize}
Here Lemma \ref{es-h_d} has been used.

Therefore, according to the dominated convergence theorem and the definition of weak convergence, we infer the desired conclusions as $n\to \infty$ and then $k\to \infty$. It remains that we need to check the Neumann boundary condition.

We only check the condition $\frac{\p u}{\p \nu}|_{\p\Om}=0$, the other can be get by a similar argument.
Since for any $\xi\in C^{\infty}(\bar{\Om}\times[0,T])$, there holds
$$\int_{0}^{T}\int_{\Om}\<\De u^{n},\xi\>dxdt=-\int_{0}^{T}\int_{\Om}\<\n u^{n},\n \xi\>dxdt.$$
Letting $n\to \infty$, we have
$$\int_{0}^{T}\int_{\Om}\<\De u,\xi\>dxdt=-\int_{0}^{T}\int_{\Om}\<\n u,\n \xi\>dxdt,$$
this means that $$\frac{\p u}{\p \nu}|_{\p \Om\times [0,T]}=0.$$

Eventually, estimate (\ref{inq-us1}) is obtained from (\ref{inq-us}) by taking $n\to\infty$, since the lower semi-continuity.
\end{proof}

Next, we show $|u|=1$ for any $0<t<T^*$, which follows that $(u,s)$ is a strong solution to \eqref{llgsp1}.
\begin{prop}\label{|u|=1}
The solution $u\in L^{\infty}([0,T],H^{2}(\Om,\Real^{3}))\cap W^{3,1}_{2}(\Om\times[0,T],\Real^{3})$ obtained in the above satisfies $|u|=1$.
\end{prop}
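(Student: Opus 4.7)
The plan is to set $\phi:=|u|^2-1$ and show that $\phi$ satisfies a linear parabolic equation with vanishing initial data and homogeneous Neumann boundary data, from which an energy/Gronwall argument forces $\phi\equiv 0$. The only structural input needed is that the two cross-product terms on the right-hand side of the first equation of \eqref{llgsp2} are orthogonal to $u$, together with the identity $\Delta u\cdot u=\tfrac{1}{2}\Delta|u|^2-|\nabla u|^2$.

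First, I would take the $\Real^3$-inner product of the first equation in \eqref{llgsp2} with $u$. Since $u\times(\cdot)\perp u$ pointwise, both the $\alpha\,u\times(u\times(\tilde h+s))$ and the $u\times(h+s)$ contributions vanish. Combining $\langle\Delta u,u\rangle=\tfrac{1}{2}\Delta|u|^2-|\nabla u|^2$ with $\alpha|\nabla u|^2|u|^2$ yields
\[
\tfrac{1}{2}\,\partial_t|u|^2=\tfrac{\alpha}{2}\Delta|u|^2+\alpha|\nabla u|^2\bigl(|u|^2-1\bigr),
\]
so that
\[
\partial_t\phi=\alpha\Delta\phi+2\alpha|\nabla u|^2\phi\quad\text{in }\Omega\times(0,T),
\]
with $\phi(\cdot,0)=|u_0|^2-1=0$ and $\partial_\nu\phi|_{\partial\Omega}=2\langle u,\partial_\nu u\rangle|_{\partial\Omega}=0$, where the Neumann trace of $u$ is the one verified at the end of Theorem \ref{reg-solu}.

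Second, I would record that Theorem \ref{reg-solu} provides $u\in L^\infty([0,T],H^2)\cap L^2([0,T],H^3)$ and $\partial_t u\in L^2([0,T],H^1)$, which is ample regularity to make the pointwise identity above rigorous: $\phi\in L^\infty([0,T],H^2)\cap L^2([0,T],H^3)$, $\partial_t\phi\in L^2([0,T],L^2)$, and by Sobolev embedding $H^2\hookrightarrow W^{1,6}$ the coefficient $|\nabla u|^2$ lies in $L^\infty([0,T],L^3(\Omega))$. Then I would multiply the $\phi$-equation by $\phi$, integrate by parts using the Neumann condition, and obtain
\[
\tfrac{1}{2}\tfrac{d}{dt}\|\phi\|_{L^2}^2+\alpha\|\nabla\phi\|_{L^2}^2=2\alpha\int_\Omega|\nabla u|^2\phi^2\,dx.
\]
Using Hölder and the interpolation $\|\phi\|_{L^3}^2\leq\|\phi\|_{L^2}\|\phi\|_{L^6}\leq C\|\phi\|_{L^2}(\|\nabla\phi\|_{L^2}+\|\phi\|_{L^2})$ together with Young's inequality, the right-hand side is bounded by $\tfrac{\alpha}{2}\|\nabla\phi\|_{L^2}^2+C(T)\|\phi\|_{L^2}^2$. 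Absorbing into the left and applying Gronwall's inequality with $\phi(0)=0$ yields $\phi\equiv 0$, i.e.\ $|u|=1$ a.e.\ on $\Omega\times[0,T]$.

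The main (and in truth quite mild) obstacle is the justification that the formal pointwise manipulation used to derive the equation for $\phi$ is valid at the regularity level available, which is why I would explicitly invoke the $L^\infty H^2\cap L^2 H^3$ estimate on $u$ together with the already-established Neumann trace before applying the linear energy argument; the uniqueness step itself is standard for a linear parabolic equation with bounded zero-order coefficient and vanishing Cauchy–Neumann data.
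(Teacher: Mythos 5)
Your proposal is correct and follows essentially the same route as the paper: both derive the linear parabolic equation $\partial_t\phi=\alpha\Delta\phi+2\alpha|\nabla u|^2\phi$ for $\phi=|u|^2-1$ with vanishing Cauchy--Neumann data and conclude by testing with $\phi$ and applying Gronwall. The only (immaterial) difference is in handling the right-hand side: the paper bounds $|\nabla u|^2_{L^\infty}\le C\|u\|^2_{H^3}\in L^1[0,T]$ and drops the gradient term, whereas you keep $\|\nabla\phi\|^2_{L^2}$ and absorb via H\"older--interpolation--Young; both are valid.
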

\begin{proof}
Let $\om=|u|^2-1$, which satisfies the following equation
\begin{equation*}
\begin{cases}
\p_t \om-\al \De w=2\al \om|\n u|^2 &(x,t)\in\Om\times(0,T],\\[1,ex]
\om(x,0)=0 & x\in \Om,\\[1ex]
\frac{\p \om}{\p \nu}=0\,&(x,t)\in\p\Om\times[0,T].
\end{cases}
\end{equation*}
If we choose $\om$ as a test function, there holds
$$\frac{1}{2}\frac{\p}{\p t}\int_{\Om}|\om|^2dx\leq 2\al|\n u|^2_{L^\infty}\int_{\Om}|\om|^2dx\leq C\norm{u}^2_{H^3(\Om)}\int_{\Om}|\om|^2dx.$$
Since $\ga(t)=\norm{u}^2_{H^3}(t)$ is in $L^1[0,T]$, the Gronwall inequality implies
$$\int_{\Om}|\om|^{2}dx(t)\leq C(T)\int_{\Om}|\om(x,0)|^2dx=0,\quad 0<t\leq T,$$
for some $C(T)$. It follows $|u|=1$ for almost every $(x,t)\in\Om\times [0,T]$.
\end{proof}

Now, we turn to showing the uniqueness of the solution $(u,s)$ to \eqref{llgsp1} obtained in the above.

\begin{prop}\label{uniqn}
There is a unique solution to \eqref{llgsp1} in $L^{\infty}([0,T],H^{2}(\Om,\Real^{3}))\cap W^{3,1}_{2}(\Om\times[0,T],\Real^{3})$.
\end{prop}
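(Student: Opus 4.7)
My plan is to prove uniqueness via an $L^2$ Gronwall estimate applied to the difference of two candidate solutions. Let $(u_1,s_1)$ and $(u_2,s_2)$ be two solutions in $L^\infty([0,T],H^2(\Om,\Real^3))\cap W^{3,1}_2(\Om\times[0,T],\Real^3)$ sharing the same initial data, and set $U=u_1-u_2$, $S=s_1-s_2$. Subtracting the two copies of \eqref{llgsp1} yields
\begin{equation*}
\begin{cases}
\p_t U = \al\De U - u_1\times \De U - U\times \De u_2 + \al(|\n u_1|^2 u_1-|\n u_2|^2 u_2) + R_1(U,S),\\[1ex]
\p_t S = -\mbox{div}\(A(u_1)\n S + (A(u_1)-A(u_2))\n s_2 + U\otimes J_e\) + R_2(U,S),
\end{cases}
\end{equation*}
with $(U,S)|_{t=0}=0$ and homogeneous Neumann data on $\p\Om$. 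Here $R_1,R_2$ collect the remaining lower-order contributions involving $\tilde h$, $h_d$, $\n_u\Phi$, $D_0$, and the undifferentiated cross products; since each $u_i,s_i\in L^\infty_t(L^\infty\cap W^{1,6})$ by $H^2\hookrightarrow W^{1,6}\hookrightarrow L^\infty$ in dimension three, and $h_d$ is bounded on $L^2$ by Lemma \ref{es-h_d}, every term in $R_j$ is pointwise controlled by $(|U|+|S|)$ times a coefficient in $L^\infty_t L^\infty_x$ or $L^\infty_tL^6_x$.

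I then test the $U$-equation with $U$ and the $S$-equation with $S$ in $L^2(\Om)$. Testing $\al\De U$ against $U$ produces the parabolic dissipation $-\al\|\n U\|_{L^2}^2$, while testing $-\mbox{div}(A(u_1)\n S)$ against $S$ produces at least $c_0(1-\th(1+\de))\|\n S\|_{L^2}^2$, using the Neumann condition to eliminate the boundary term and the coercivity recorded in Section \ref{s: reg-sol}. Two structural cancellations handle the dangerous cross products: $\int_\Om(U\times \De u_2)\cdot U\equiv 0$ pointwise by the triple-product identity, and
\[\int_\Om (u_1\times \De U)\cdot U = -\int_\Om (\n u_1\times \n U)\cdot U\]
after integration by parts, using $\frac{\p U}{\p\nu}=0$ and $(u_1\times\n U)\cdot \n U\equiv 0$. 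The residue is bounded by $\|\n u_1\|_{L^3}\|\n U\|_{L^2}\|U\|_{L^6}\le C\|u_1\|_{H^2}(\ep\|\n U\|_{L^2}^2+C_\ep\|U\|_{L^2}^2)$. The tension-field difference splits as $|\n u_1|^2 u_1-|\n u_2|^2 u_2 = |\n u_1|^2 U + \<\n u_1+\n u_2,\n U\>u_2$ and both pieces are absorbed similarly via H\"older and $H^1\hookrightarrow L^6$. For the spin equation, the pointwise Lipschitz bound $|A(u_1)-A(u_2)|\le C|U|$ combined with $\n s_2\in L^\infty_t L^2_x$ controls $(A(u_1)-A(u_2))\n s_2$ tested against $\n S$; the spin cross product $D_0(S\times u_1+s_2\times U)\cdot S$ reduces to $\int D_0(s_2\times U)\cdot S$ since $(S\times u_1)\cdot S\equiv 0$.

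Assembling the estimates and choosing $\ep$ small enough to absorb every gradient term into the dissipation yields
\[\frac{d}{dt}\(\|U\|_{L^2}^2+\|S\|_{L^2}^2\) + c\(\|\n U\|_{L^2}^2+\|\n S\|_{L^2}^2\) \le \Psi(t)\(\|U\|_{L^2}^2+\|S\|_{L^2}^2\),\]
with $\Psi\in L^1(0,T)$ depending only on $\|(u_i,s_i)\|_{L^\infty_t H^2_x}$ and $\|J_e\|_{L^\infty_t H^2_x}$, both finite by Theorem \ref{reg-solu} and the hypothesis on $J_e$. Since $U(0)=S(0)=0$, Gronwall forces $U\equiv S\equiv 0$ on $[0,T]$. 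I expect the main obstacle to be the absorption of the first-order cross terms $\int(\n u_1\times\n U)\cdot U$ and $\int(A(u_1)-A(u_2))\n s_2\cdot\n S$: their coefficients lie only in $L^\infty_tL^3_x$, so an entire derivative of $U$ or $S$ must be absorbed into the parabolic dissipation. This succeeds only because of the Gilbert damping $\al>0$ and the uniform ellipticity $\th(1+\de)<1$ of $-A(u)$; without either, the argument would not close.
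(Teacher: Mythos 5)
Your strategy is viable and genuinely different from the paper's. The paper runs the contraction estimate at the $H^1$ level: it tests the difference system with both $(\bar u,\bar s)$ and $(\De\bar u,\De\bar s)$, obtains dissipation $\al\norm{\De\bar u}_{L^2}^2+(1-\th)\int_\Om D_0|\De\bar s|^2$, and closes a Gronwall inequality for $\norm{\bar u}_{H^1}^2+\norm{\bar s}_{H^1}^2$ with a coefficient $F(t)$ involving the $H^3$-norms of the two solutions, which is only in $L^1[0,T]$ (this is exactly where the class $W^{3,1}_2$ is used). You instead work purely at the $L^2$ level, exploiting the pointwise cancellations $(U\times\De u_2)\cdot U=0$, $(u_1\times\n U)\cdot\n U=0$ and $(S\times u_1)\cdot S=0$ to avoid ever testing against a Laplacian; in exchange your Gronwall weight $\Psi$ only needs the $L^\infty_tH^2_x$ bounds from Theorem \ref{reg-solu}. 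Both routes are legitimate; yours is in a sense sharper (uniqueness in a weaker topology), while the paper's $H^1$ version is more forgiving because the extra dissipation $\norm{\De\bar u}_{L^2}^2$ absorbs the second-order cross terms without needing the triple-product identities.

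There is, however, one step that fails as written: the absorption
$\norm{\n u_1}_{L^3}\norm{\n U}_{L^2}\norm{U}_{L^6}\le C\norm{u_1}_{H^2}\bigl(\ep\norm{\n U}_{L^2}^2+C_\ep\norm{U}_{L^2}^2\bigr)$
is false, because $\norm{U}_{L^6}\le C(\norm{\n U}_{L^2}+\norm{U}_{L^2})$ forces a full term $C\norm{u_1}_{H^2}\norm{\n U}_{L^2}^2$ with a constant that cannot be made small, hence cannot be absorbed into $\al\norm{\n U}_{L^2}^2$. The repair is standard but must be made explicitly: measure the coefficient in $L^6$ (legitimate since $H^2(\Om)\hookrightarrow W^{1,6}(\Om)$) and interpolate the low factor, e.g.
$\int_\Om|\n u_1||\n U||U|\,dx\le\norm{\n u_1}_{L^6}\norm{\n U}_{L^2}\norm{U}_{L^3}\le C\norm{u_1}_{H^2}\norm{U}_{H^1}^{3/2}\norm{U}_{L^2}^{1/2}\le\ep\norm{U}_{H^1}^2+C_\ep\norm{u_1}_{H^2}^4\norm{U}_{L^2}^2$,
by Young's inequality with exponents $4/3$ and $4$. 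The same device is needed for $\int_\Om|\n u_1|^2|U|^2dx$ and for $\int_\Om|(A(u_1)-A(u_2))\n s_2||\n S|dx$ (bound the latter by $\norm{\n s_2}_{L^6}\norm{U}_{L^3}\norm{\n S}_{L^2}$, not by $\norm{\n s_2}_{L^3}\norm{U}_{L^6}\norm{\n S}_{L^2}$). With these corrections the coefficient $\Psi(t)$ becomes a polynomial in the $L^\infty_tH^2_x$ norms and the argument closes; without them the key terms you yourself flag as the main obstacle are not actually absorbed.
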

\begin{proof}
Let $(u,s)$ and $(\tilde{u},\tilde{s})$ are two solutions to \eqref{llgsp1} in $L^{\infty}([0,T],H^{2}(\Om,\Real^{3}))\cap W^{3,1}_{2}(\Om\times[0,T],\Real^{3})$ and set $(\bar{u},\bar{s})=(u-\tilde{u},s-\tilde{s})$. Then, $(\bar{u},\bar{s})$ satisfies the following equation
\begin{equation*}
\begin{cases}
\p_t \bar{u}=\De \bar{u}+R_1,\\[1ex]
\p_t \bar{s}=-\mbox{div}(A(u)\n \bar{s})+R_{2},
\end{cases}
\end{equation*}
with initial-boundary condition:
\begin{equation*}
\begin{cases}
u(\cdot,0)=0,\,\,\frac{\p u}{\p \nu}|_{\p \Om}=0,\\[1ex]
s(\cdot,0)=0,\,\,\frac{\p s}{\p \nu}|_{\p \Om}=0.
\end{cases}
\end{equation*}
Here
\begin{align*}
R_1=&|\n u|^2 \bar{u}-(|\n u|^2-|\n \tilde{u}|^2)\tilde{u}+\bar{u}\times(u\times (\tilde{h}(u)+s))\\
&+\tilde{u}\times(\bar{u}\times (\tilde{h}+s))+\tilde{u}\times(\tilde{u}\times (\tilde{h}(u)-\tilde{h}(\tilde{u})+\bar{s}))\\
&+\bar{u}\times(h(u)+s)+\tilde{u}\times(h(u)-h(\tilde{u})+\bar{s}),\\
R_2=& \mbox{div}(\bar{u}\otimes J_e)-\th\(\mbox{div}(D_0\cdot(u\otimes(\n s\cdot u)))-D_0\cdot(\tilde{u}\otimes(\n \tilde{s}\cdot \tilde{u}))\)\\
&-D_0\bar{s}-D_0\bar{s}\times u-D_0\tilde{s}\times \bar{u}.
\end{align*}

By choosing test functions $(\bar{u},\bar{s})$ and $(\De \bar{u},\De \bar{s})$, we take a direct calculation to show
\begin{align*}
&\frac{\p}{\p t}(\norm{\bar{u}}^2_{H^1}+\norm{\bar{s}}^2_{H^1})+\al \int_{\Om}|\De \bar{u}|^2dx+(1-\th)\int_{\Om}D_0|\De \bar{s}|^2dx\\
\leq& C(\al,\th, \norm{D_0}_{C^1},c_0, \norm{J_e}_{H^2})F(t)(\norm{\bar{u}}^2_{H^1}+\norm{\bar{s}}^2_{H^1})
\end{align*}
for a.e $t\in [0,T]$. Here $F(t)=\norm{u}^2_{H^3}+\norm{\tilde{u}}^2_{H^3}+\norm{s}^2_{H^3}+\norm{\tilde{s}}^2_{H^3}+1$, which is in $L^{1}[0,T]$. Thus, the Gronwall inequality implies
$$\norm{\bar{u}}^2_{H^1}+\norm{\bar{s}}^2_{H^1}=0,\quad\text{for any t}\in[0,T].$$
Therefore, the proof is completed.
\end{proof}

\noindent{\bf Proof of Theorem \ref{mth1}}

Immidiately, Theorem \ref{mth1} follows Theorem \ref{reg-solu}, Proposition \ref{|u|=1} and Proposition \ref{uniqn}. Thus, we finish the proof.

\section{Very regular solution}\label{s: very-reg-sol}
However, when $k\geq 3$, the $H^{k+1}$-estimates of solution can't be derived by multiplying the two sides of the Galerkin approximation of \eqref{llgsp2} in the above section by
$$(-1)^k(\De^k u^n,\De^k s^{n}),$$
since the right hand side of equation \eqref{app-llgsp} doesn't satisfy the homogeneous Neumann boundary condition. Thus, to improve the regularity of the solution $(u,s)$ to \eqref{llgsp1} constructed in the previous section, we apply the method used in \cite{CJ}.

 Roughly speaking, under suitable compatibility initial-boundary conditions, which are defined in Section \ref{ss: comp-cond}, we consider equation \eqref{comp-llgsp3} for $k=1$. By using Galerkin approximation method, we can get a solution  $(u_1,s_1)\in L^{\infty}([0,T],H^{2}(\Om,\Real^{3}))\cap W^{3,1}_{2}(\Om\times[0,T],\Real^{3})$, where $0<T<T^*$ and $T^*$ is determined in Theorem \ref{mth1}. Fortunately, a uniqueness argument guarantees that $(u_1,s_1)\equiv(\p_t u,\p_t s)$. Then, we can get $(u,s)\in L^{\infty}([0,T],H^{4}(\Om,\Real^{3}))\cap W^{5,1}_{2}(\Om\times[0,T],\Real^{3})$ by a bootstrap argument using equation \eqref{llgsp1}. Therefore, we can get very higher regularity of $(u,s)$ step by step, by considering the compatible equation \eqref{comp-llgsp3} for $k>1$.

 Next, we divide this section into two parts. In the first part, we give a detailed process to enhance the regularity of $(u,s)$ to $L^{\infty}([0,T],H^{5}(\Om,\Real^{3}))\cap W^{6,1}_{2}(\Om\times[0,T],\Real^{3})$. And then in the other part, the very regular results can be obtained by using method of induction with a similar argument as that in the first part.

\subsection{$H^6$-Regularity of solution}\label{ss: 1-reg}\

Let $(u^n,s^n)$ be the solution of \eqref{app-llgsp} given in the previous section, and let $(u^n_t,s^n_t)=(\p_t u^n,\p_t s^n)$. Then, $(u^n_t,s^n_t)$ satisfies the following equation
\begin{equation}\label{eq-p_t}
\begin{cases}
\p_t u^n_t=\al \De u^n_t+T_1+T_2+T_3+T_4+T_5,\,\,&(x,t)\in \Om\times[0,T^*),\\[1ex]
\p_t u^n_t=-P_n\(\mbox{div}(A(\J(u^n))\n s^n_t+u^n_t\otimes J_e\)+I_1+I_2,\,\,&(x,t)\in \Om\times[0,T^*),
\end{cases}
\end{equation}
with initial condition
\begin{equation*}
\begin{cases}
u^n_t(x,0)=\sum_{i=1}^{n}\p_t g^n_i(0)f_i,\,& x\in\Om,\\[1ex]
s^n_t(x,0)=\sum_{i=1}^{n}\p_t \ga^n_i(0)f_i,\,& x\in\Om.
\end{cases}
\end{equation*}
Here
\begin{equation*}
\begin{cases}
T_1=\al P_n\(u^n\times (u^n\times (h_d(u^n_t)-\p_t \n_u\Phi(\J(u^n)+s^n_t))\),\\[1ex]
T_2=\al P_n\(u^n_t\times (u^n\times (h_d(u^n)-\n_u\Phi(\J(u^n)+s^n))\)\\
\quad\quad+\al P^n\(u^n\times (u^n_t\times (h_d(u^n)-\n_u\Phi(\J(u^n)+s^n))\),\\[1ex]
T_3=-P_n\(u^n\times(\De u^n_t+h_d(u^n_t)-\p_t \n_u\Phi(\J(u^n))+s^n_t)\),\\[1ex]
T_4=-P_n\(u^n_t\times(\De u^n+h_d(u^n)-\n_u\Phi(\J(u^n))+s^n)\),\\[1ex]
T_5=\al P_n\(|\n u^n|^2u^n_t+2\<\n u^n_t,\n u^n\>u^n\),\\[1ex]
I_1=-P_n\(\mbox{div}(\p_t A(\J(u^n))\n s^n+u^n\otimes \p_tJ_e)\),\\[1ex]
I_2=P_n\(D_0s^n_t+D_0s^n_t\times u^n+D_0s^n\times u^n_t\).
\end{cases}
\end{equation*}

For the sake of convenience, we also denote $$I_0=-P_n\(\mbox{div}(A(\J(u^n))\n s^n_t+u^n_t\otimes J_e\).$$
In Lemma \ref{es-us}, we have shown that
$$\norm{u^n_t}^2_{L^2}+\norm{s^n_t}^2_{L^2}=g(t)\leq C(T),\quad\quad 0\leq t\leq T<T^*.$$

Next, in order to show the $H^2$-estimate of $(u^n_t,s^n_t)$, we choose $(-\De u^n_t,-\De s^n_t)$ as a test function for the above equation \eqref{eq-p_t}. Then, there hold true
\begin{align*}
\frac{1}{2}\frac{\p}{\p_t}\int_{\Om}|\n u^n_t|^2dx+\al\int_{\Om}|\De u^n_t|^2dx=-\sum_{i=1}^{5}\int_{\Om}\<T_i,\De u^n_t\>dx,
\end{align*}
and
\begin{align*}
\frac{1}{2}\frac{\p}{\p_t}\int_{\Om}|\n s^n_t|^2dx=-\sum_{i=0}^{3}\<I_i, \De s^n_t\>dx.
\end{align*}
Next, we estimate the terms on the right hand sides of the above equations step by step.
\begin{align*}
\left|\int_{\Om}\<T_1, \De u^n_t\>dx\right|\leq& C(\ep,\al,\de)U^2(\norm{u^n_t}^2_{L^2}+\norm{s^n_t}^2_{L^2})+\ep \al\norm{\De u^n_t}^2_{L^2},\\
\left|\int_{\Om}\<T_2, \De u^n_t\>dx\right|\leq& C(\ep,\al,\de)U(U+S+1)\norm{u^n_t}^2_{L^2}+\ep \al\norm{\De u^n_t}^2_{L^2},\\
\left|\int_{\Om}\<T_3, \De u^n_t\>dx\right|\leq& C(\ep,\al,\de)U(\norm{u^n_t}^2_{L^2}+\norm{s^n_t}^2_{L^2})+\ep \al\norm{\De u^n_t}^2_{L^2},\\
\left|\int_{\Om}\<T_4, \De u^n_t\>dx\right|\leq& C(\ep,\al,\de)(\norm{u^n}^2_{H^3}+S+1)\norm{u^n_t}^2_{H^1}+\ep \al\norm{\De u^n_t}^2_{L^2},\\
\left|\int_{\Om}\<T_5, \De u^n_t\>dx\right|\leq &C(\ep,\al,\de)U(U+\norm{u^n}^2_{H^3})\norm{u^n_t}^2_{H^1}+\ep \al\norm{\De u^n_t}^2_{L^2},
\end{align*}

\begin{align*}
-\int_{\Om}\<I_0, \De s^n_t\>dx \leq& C(\ep,\al,\de, \norm{D_0}_{C^1})(U+\norm{J_e}^2_{H^2}+1)(\norm{u^n_t}^2_{H^1}+\norm{\n s^n_t}^2_{L^2})\\
&-(1-(1+\de+\ep)\th)\int_{\Om}D_0|\De s^n_t|^2dx,\\
\left|\int_{\Om}\<I_1, \De s^n_t\>dx\right|\leq& C(\ep,\th,\de,\norm{D_0}_{C^1})(\norm{s^n}^2_{H^3}+S\norm{u^n}^2_{H^3})\norm{u^n_t}^2_{H^1}\\
&+C U\norm{\p_t J_e}^2_{H^1}+\ep \th\int_{\Om}D_0|\De s^n_t|^2dx,\\
\left|\int_{\Om}\<I_2, \De s^n_1\>dx\right|\leq& C(\ep,\th,c_0)(U+S+1)(\norm{u^n_t}^2_{L^2}+\norm{s^n_t}^2_{L^2})+\ep \th\int_{\Om}D_0|\De s^n_t|^2dx.
\end{align*}
By combining the above inequalities, we obtain
\begin{equation}\label{es-u_ts_t}
\begin{aligned}
&\frac{\p}{\p t}(\int_{\Om}|\n u^n_t|^2dx+\int_{\Om}|\n s^n_t|^2dx)+\al\int_{\Om}|\De u^n_t|^2dx+(1-(1+\de)\th)\int_{\Om}D_0|\De s^n_t|^2dx\\
\leq &f(t)(\norm{u^n_t}^2_{L^2}+\norm{s^n_t}^2_{L^2}+1)+f(t)(\int_{\Om}|\n u^n_t|^2dx+\int_{\Om}|\n s^n_t|^2dx).
\end{aligned}
\end{equation}
Here
$$f(t)=C(\al,\de,\th, \norm{D_0}_{C^1})(U^2+S^2+(S+1)\norm{u^n}^2_{H^3}+\norm{J_e}^2_{H^2}+U\norm{\p_t J_e}^2_{H^1}+1),$$
which is in $L^1([0,T])$ for any $0<T<T^*$. Here we have used that $J_e\in L^2([0,T],H^2(\Om))$ and $\p_t J_e\in L^2([0,T],H^1(\Om))$.

Thus, the Gronwall inequality and (\ref{es-u_ts_t}) imply
\begin{equation}\label{est-u_ts_t}
\begin{aligned}
&\sup_{0<\leq t\leq T}(\int_{\Om}|\n u^n_t|^2dx+\int_{\Om}|\n s^n_t|^2dx)+\al\int_{0}^{T}\int_{\Om}|\De u^n_t|^2dx\\
&+(1-(1+\de)\th)\int_{0}^{T}\int_{\Om}D_0|\De s^n_1|^2dx\\
\leq& C(T, \norm{\n u^n_t}^2_{L^2}(0)+\norm{\n s^n_t}^2_{L^2}(0), \norm{f}_{L^1([0,T])}).
\end{aligned}
\end{equation}
On the other hand, equation \eqref{app-llgsp} implies
\begin{equation}
\begin{cases}
u^n_t(x,0)= P_n\(\al(\De u^n_0+|\n u^n_0|^2u^n_0)-u^n_0\times (u^n_0\times(\tilde{h}+s^n_0)))-u^n_0\times(h+s^n_0)\),\\[1ex]
u^n_t(x,0)=-P_n\(\mbox{div}\(A(\J(u^n_0))\n s^n_0+u^n_0\times J_e\)-D_0(x)\cdot s^n_0-D_0(x)\cdot s^n_0\times u^n_0\).
\end{cases}
\end{equation}
A direct computation shows
\begin{align*}
\norm{\n u^n_t}^2_{L^2}(0)+\norm{\n s^n_t}^2_{L^2}(0)\leq& C (\norm{u^n_0}_{H^3(\Om)}+\norm{s^n_0}_{H^3(\Om)}+1)\\
\leq &C(\norm{u_0}_{H^3(\Om)}+\norm{s_0}_{H^3(\Om)}+1),
\end{align*}
since we have the following estimates
$$\norm{u^n_0}_{H^3(\Om)}\leq C\norm{u_0}_{H^3(\Om)},$$
$$\norm{s^n_0}_{H^3(\Om)}\leq C\norm{s_0}_{H^3(\Om)},$$
and $J_e\in C^0(\Real^+,H^2(\Om))$.

Now, we can give the proof of Theorem \ref{mth2} by using the above estimate of $(u^n_t,s^n_t)$ and a bootstrap argument.

\begin{proof}[\textbf{The proof of Theorem \ref{mth2}}]
The estimates in (\ref{est-u_ts_t}) indicates
$$(u^n_t,s^n_t)\rightharpoonup (\p_tu, \p_t s)\quad\text{weakly in}\, L^{2}([0,T],H^{2}(\Om,\Real^{3})),$$
and
$$(u^n_t,s^n_t)\rightharpoonup (\p_tu, \p_t s)\quad\text{weakly* in}\, L^{\infty}([0,T],H^{1}(\Om,\Real^{3})).$$
Thus,
$$(\p_tu, \p_t s)\in L^\infty([0,T],H^1(\Om,\Real^3))\cap L^2([0,T],H^2(\Om,\Real^3)).$$

Next, by using the property of cross-product, the equation \eqref{llgsp} can be rewritten as the form
\begin{equation}\label{llgsp'}
\begin{cases}
\De u=-|\n u|^2u+u\times (u\times(\tilde{h}+s))-\frac{1}{1+\al^2}\(\al \p_t u+u\times \p_t u\),&\text{(x,t)}\in\Om\times \Real^+,\\[1ex]
L s=\mbox{div}(A(u)\n s)=-\p_t s+\mbox{div}(u\otimes J_e)+D_0(x)\cdot s+D_0(x)\cdot s\times u, &\text{(x,t)}\in\Om_0\times \Real^+.
\end{cases}
\end{equation}

\noindent\emph{We claim: $\De u$ and $\De s\in L^\infty([0,T],H^1(\Om,\Real^3))\cap L^2([0,T],H^2(\Om,\Real^3)).$} \medskip

Here, we shall only prove $\De u\in L^\infty([0,T],H^1(\Om,\Real^3))\cap L^2([0,T],H^2(\Om,\Real^3))$, since we can take a similar argument to prove the fact $\De s$ belongs to the same space as $\De u$.

From the first equation of \eqref{llgsp'} and $u\in L^\infty([0,T],H^2(\Om,\Real^3))\cap L^2([0,T],H^3(\Om,\Real^3))$, we can easily get that
$$\sup_{0<t\leq T}\norm{\De u}_{L^3}\leq C(T).$$
Since
\begin{align*}
\n \De u=&-(|\n u|^2\n u+2\n^2 u\cdot \n u u)+\n \(u\times (u\times(\tilde{h}+s))\)\\
&+\frac{1}{1+\al^2}(\al \n \p_t u+\n u\times \p_tu+u\times \n \p_t u),
\end{align*}
by a simple calculation we obtain
$$\norm{\n \De u}^2_{L^2}\leq C(U+S)^3+U\norm{\n^2 u}^2_{L^3}+(U+1)\norm{\p_t u}^2_{H^1}\leq C(T),$$
where we have used the global $L^p$ estimate (see Theorem 2.3 in \cite{Weh})
$$\norm{\n^2 u}_{L^3}\leq C(\norm{\De u}_{L^3}+\norm{u}_{L^3}).$$
Thus, we have $u\in L^\infty([0,T],H^3(\Om,\Real^3))$ by classical $L^p$-estimate.

In the sense of distribution, we have
\begin{align*}
\n^2 \De u=F=&-\n(|\n u|^2\n u+2\n^2 u\cdot \n u u)+\n^2 \(u\times (u\times(\tilde{h}+s))\)\\
&+\frac{1}{1+\al^2}\n(\al \n \p_t u+\n u\times \p_tu+u\times \n \p_t u).
\end{align*}
By a simple calculation, we get
$$\norm{F}^2_{L^2}\leq C(U+S+1)^2+(U+1)\norm{ u}^4_{H^3}+\norm{\p_t u}^2_{H^{1}}\norm{ u}^2_{H^3}+\norm{\n^2 \p_tu}^2_{L^2},$$
which implies that
$$\De u\in L^{2}([0,T],H^2(\Om,\Real^3)).$$
Thus, we have
 $$u\in L^2([0,T],H^4(\Om,\Real^3)).$$
Therefore, we have finished the proof of the claim on $\De u$.

Now we turn to considering the regularity of $\De s$. Since we have shown that
$$u\in L^\infty([0,T],H^3(\Om,\Real^3))\cap L^2([0,T],H^4(\Om,\Real^3))$$
and $L s=\mbox{div}(A(u)\n s)$ is a uniformly elliptic operator, a similar argument to that in the above implies
 $$\De s \in L^\infty([0,T],H^1(\Om,\Real^3))\cap L^2([0,T],H^2(\Om,\Real^3)),$$
Here we need $D_0\in C^3(\bar{\Om})$. Thus, it follows $s \in L^\infty([0,T],H^3(\Om,\Real^3))\cap L^2([0,T],H^4(\Om,\Real^3))$.
\end{proof}

\begin{rem}
We can also show $(u,s)\in C^0([0,T], H^3(\Om,\Real^3))$ by the embedding Theorem $II.5.14$ in \cite{BF}.
\end{rem}

\begin{rem}
We cannot get  $H^3$-estimate of $(u_t,s_t)$ as that for $(u,s)$ in the previous section by applying test function $(\De^2 u^n_t, \De^2 s^n_t)$, since the following inequality does not hold true
$$\norm{u^n_0}_{H^4}\leq C\norm{u_0}_{H^4},$$
which is beyond of our knowledge. Meanwhile, the same reason implies that we  also can't get higher regularity of $(u,s)$ by considering the equation of $(\p^2_{t} u^n, \p^2_{t} s^n)$.
\end{rem}

The above remark indicates that one can not proceed to enhance the regularity except for one adds some new restrictive conditions on the initial data. Motivated by \cite{CJ}, we intend to adding some suitable compatibility boundary conditions to enhance the regularity (see \cite{CJ}). Hence, we consider the following equation
\begin{equation}\label{comp-llgsp}
\begin{cases}
\p_t u_1=\al \De u_1-u\times \De u_1+K_1(\n u_1,\n s_1)+L_1(s_1,u_1),\quad\quad&\text{(x,t)}\in\Om\times \Real^+,\\[1ex]
\p_t s_1=-\mbox{div}\(A(u)\n s_1\)+\hat{Q}_1(\n u_1,\n s_1)+T_1(u_1,s_1), \quad\quad&\text{(x,t)}\in\Om\times \Real^+,\\[1ex]
\frac{\p u_1}{\p \nu}=0,\quad\quad&\text{(x,t)}\in\p \Om\times \Real^+,\\[1ex]
\frac{\p s_1}{\p \nu}=0,\quad\quad&\text{(x,t)}\in\p \Om\times \Real^+.
\end{cases}
\end{equation}
with initial-boundary condition
\begin{equation}
\begin{cases}
u_1(x,\cdot)=V_1(u_0,s_0),\quad \frac{\p V_1}{\p \nu}|_{\p \Om}=0;\\[1ex]
s_1(x,\cdot)=W_1(u_0,s_0),\quad \frac{\p W_1}{\p \nu}|_{\p \Om}=0.
\end{cases}
\end{equation}
Here,
\begin{align*}
K_1(\n u_1,\n s_1)=&2\al (\n u_1\cdot \n u) u,\\
\hat{Q}_1(\n u_1,\n s_1)=&-\mbox{div}(u_1\otimes J_e)-\mbox{div}(u\otimes \p_tJ_e),\\
L_1(u_1,s_1)=&\al |\n u|^2u_1-\al u_1\times (u\times(\tilde{h}(u)+s))-\al u\times (u_1\times(\tilde{h}(u)+s))\\
&-\al u_1\times (u\times(\bar{h}(u_1)+s_1))-u\times(\bar{h}(u_1)+s_1)-u_1\times(h(u)+s),\\
T_1(u_1,s_1)=&\th \mbox{div}(D_0u_1\otimes(\n s\cdot u+D_0u\otimes(\n s\cdot u_1)))-D_0s_1\\
&-D_0s_1\times u-D_0 s\times u_1,\\
V_1(u_0,s_0)=&\al\(\De u_0+|\n u_0|^2u_0-u_0\times (u_0\times(\tilde{h}(u_0)+s_0))\)\\
&-u_0\times(\De u_0 +\tilde{h}(u_0)+s_0),\\
W_1(u_0,s_0)=&-\mbox{div}\(A(u_0)\n s_0+u_0\otimes J_e(x,0)\)-D_0(x)\cdot s_0-D_0(x)\cdot s_0\times u_0,
\end{align*}
where $\bar{h}(u_1)=h_d(u_1)-\n^2\Phi(u)\cdot u_1$.

The above equation \eqref{comp-llgsp} is a linear system with respect to $u_1$ and $s_1$, since $h_d$ is a linear operator. Next, as before, we also need to consider the following Galerkin approximation equation associated to \eqref{comp-llgsp}
\begin{equation}\label{app-comp-llgsp}
\begin{cases}
\p_t u^n_1=P_n\(\al\De u^n_1-u\times \De u^n_1+K_1(\n u^n_1,\n s^n_1)+L_1(s^n_1,u^n_1)\),\quad\quad&\text{(x,t)}\in\Om\times \Real^+,\\[1ex]
\p_t s^n_1=P_n\(-\mbox{div}(A(u)\n s^n_1)+\hat{Q}_1(\n u^n_1,\n s^n_1)+T_1(u^n_1,s^n_1)\), \quad\quad&\text{(x,t)}\in\Om\times \Real^+\\[1ex]
\end{cases}
\end{equation}
with initial data
\begin{equation}
\begin{cases}
u^n_1(x,\cdot)=P_n(V_1(u_0,s_0)),\\[1ex]
s^n_1(x,\cdot)=P_n(W_1(u_0,s_0)).
\end{cases}
\end{equation}

By the almost same argument as that on equation \eqref{app-llgsp} in Section \ref{s: reg-sol}, it is not difficult to prove that there exists a solution $(u^n_1,s^n_1)$ in $L^\infty([0,T^*),H^2(\Om,\Real^3))\cap L^2([0,T^*),H^3(\Om,\Real^3))$ with energy estimates as follows
\begin{equation}\label{newinq-us1}
\begin{aligned}
&\sup_{0< t\leq T}\(\norm{u^n_1}_{H^2(\Om)}+\norm{s^n_1}_{H^2(\Om)}\)+\al\int_{0}^{T}\norm{\n \De u^{n}_1}^2_{L^2(\Om)}dt+(1-\th)c_0\int_{0}^{T}\norm{\n \De s^{n}_1}^2_{L^2(\Om)}dt\\
\leq& C(T, \norm{(u_0,s_0)}_{H^2(\Om)},\norm{(V_{1},W_1)}_{H^2(\Om)}),
\end{aligned}
\end{equation}
and
\begin{equation}\label{newinq-us2}
\begin{aligned}
&\sup_{0< t\leq T}(\norm{\p_t u^n_1}^2_{L^2(\Om)}+\norm{\p_t s^n_1}^2_{L^2(\Om)})+\int_{0}^{T}(\norm{\n \p_tu^{n}_1}^2_{L^2(\Om)}+\norm{\n \p_ts^{n}_1}^2_{L^2(\Om)})dt\\
\leq& C(T, \norm{(u_0,s_0)}_{H^2(\Om)},\norm{(V_{1},W_1)}_{H^2(\Om)}),
\end{aligned}
\end{equation}
for any $0<T<T^*$, since the system \eqref{comp-llgsp} is linear. Here we need assume
$$J_e\in C^0([0,T],H^3(\Om,\Real^3))\quad\text{and}\quad \p_tJ_e\in L^\infty([0,T],H^2(\Om,\Real^3)),$$
and use the estimate:
\begin{equation*}
\begin{aligned}
\norm{(P_n(V_1),P_n(W_1))}_{H^2(\Om)}\leq C \norm{(V_1,W_1)}_{H^2(\Om)}\leq C(\norm{(u_0,s_0)}_{H^4(\Om)}).
\end{aligned}
\end{equation*}

By the Alaoglu's theorem and Aubin-Simon compactness Theorem \ref{A-S}, we know that there exists a subsequence of $\{(u^{n}_1,s^n_1)\}$, we still denote it by $\{(u^{n}_1,s^n_1)\}$, and a map $$(u_1,s_1) \in L^{\infty}([0,T],H^{2}(\Om,\Real^{3}))\cap W^{3,1}_{2}(\Om\times[0,T],\Real^{3})$$ such that, for any $1<p<\infty$,
\begin{itemize}
	\item $(u^n_1,s^n_1)\rightharpoonup (u_1,s_1),\,\,\text{weakly* in}\, L^{\infty}([0,T],H^{2}(\Om,\Real^{3}))$,
	\item $(u^n_1,s^n_1)\rightharpoonup (u_1,s_1),\,\,\,\text{weakly in}\,W^{3,1}_{2}(\Om\times[0,T],\Real^{3})$,
	\item $(u^n_1,s^n_1)\to (u_1,s_1),\,\,\,\,\,\text{strong in}\,L^p([0,T],H^2(\Om, \Real^3))$.
\end{itemize}
It follows that $(u_1, s_1)$ is a strong solution to \eqref{comp-llgsp}. Meanwhile, it is not difficult to show $(u_t,s_t)$ is a solution to \eqref{comp-llgsp} in $L^\infty([0,T],H^1(\Om,\Real^3))\cap L^2([0,T],H^2(\Om,\Real^3))$, since $(u,s)\in C^0([0,T],H^3(\Om,\Real^3))$.  \medskip

If the uniqueness of solution to \eqref{comp-llgsp} in $L^\infty([0,T],H^1(\Om,\Real^3))\cap L^2([0,T],H^2(\Om,\Real^3))$ can be established well, then one can see easily that $(u_t,s_t)\equiv(u_1,s_1)$, which implies higher regularity. So, we need to prove the following

\begin{prop}\label{uniq}
There is a unique solution to \eqref{comp-llgsp} in $L^\infty([0,T],H^1(\Om,\Real^3))\cap L^2([0,T],H^2(\Om,\Real^3))$.
\end{prop}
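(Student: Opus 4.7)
The plan is to exploit that system \eqref{comp-llgsp} is \emph{linear} in the unknowns $(u_1,s_1)$: all nonlinearities have been absorbed into coefficients built from the fixed background $(u,s)$ supplied by Theorem \ref{mth1}. Given two solutions $(u_1,s_1)$ and $(\tilde u_1,\tilde s_1)$ in $L^\infty([0,T],H^1)\cap L^2([0,T],H^2)$ with identical initial data, I would set $(\bar u,\bar s)=(u_1-\tilde u_1,s_1-\tilde s_1)$; by linearity this difference solves the same system with vanishing initial datum and homogeneous Neumann boundary conditions, so the aim reduces to closing a plain $L^2$-energy estimate on $(\bar u,\bar s)$ by Gronwall.

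First I would multiply the first equation of \eqref{comp-llgsp} by $\bar u$ and the second by $\bar s$ and integrate over $\Om$. The two dissipative contributions $\al\|\n\bar u\|_{L^2}^2$ and $(1-\theta)c_0\|\n\bar s\|_{L^2}^2$ emerge from $\al\De\bar u$ and $-\mbox{div}(A(u)\n\bar s)$, using the Neumann condition and the coercivity bound $-\xi^{T}A(u)\xi\ge(1-\theta)D_0|\xi|^2$ recorded at the beginning of Section \ref{s: reg-sol}.

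The key step is the cross-product term $-\int_{\Om}(u\times\De\bar u)\cdot\bar u\,dx$, which is not of lower order. Writing $(u\times\De\bar u)_i=\ep_{ijk}u_j\De\bar u_k$ and integrating by parts (the boundary piece vanishes by the Neumann condition on $\bar u$), there remain
\[\int_{\Om}\ep_{ijk}(\p_l u_j)\bar u_i(\p_l\bar u_k)\,dx+\int_{\Om}\ep_{ijk}u_j(\p_l\bar u_i)(\p_l\bar u_k)\,dx.\]
The second integral vanishes by antisymmetry of $\ep_{ijk}$ in $(i,k)$, and the first is controlled by $\|\n u\|_{L^\infty}\|\bar u\|_{L^2}\|\n\bar u\|_{L^2}$ and absorbed via Young's inequality, producing a factor $\|\n u\|_{L^\infty}^2$ that lies in $L^1([0,T])$ since $u\in L^2([0,T],H^3)$ and $H^2(\Om)\hookrightarrow L^\infty(\Om)$ in dimension three. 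All remaining lower-order contributions from $K_1,L_1,\hat Q_1,T_1$ I would handle by Cauchy--Schwarz combined with the Sobolev embeddings $H^1\hookrightarrow L^6$ and $H^2\hookrightarrow L^\infty$, Lemma \ref{es-h_d} giving $\|h_d(\bar u)\|_{L^2}\le C\|\bar u\|_{L^2}$, and the known bounds on $(u,s)$ in $L^\infty H^2\cap L^2 H^3$ and on $J_e$ in $L^\infty H^2$.

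After absorbing small multiples of $\|\n\bar u\|_{L^2}^2$ and $\|\n\bar s\|_{L^2}^2$ into the left-hand side, I would arrive at
\[\frac{d}{dt}\bigl(\|\bar u\|_{L^2}^2+\|\bar s\|_{L^2}^2\bigr)\le f(t)\bigl(\|\bar u\|_{L^2}^2+\|\bar s\|_{L^2}^2\bigr),\]
with $f\in L^1([0,T])$ a polynomial in $\|u\|_{H^3}^2$, $\|s\|_{H^3}^2$, $\|\n u\|_{L^\infty}^2$ and $\|J_e\|_{H^2}^2$. Gronwall then forces $(\bar u,\bar s)\equiv 0$, proving uniqueness. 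The hardest part is precisely the cross-product term: without the cancellation coming from $(u\times v)\cdot v=0$ applied component-wise to $\n\bar u$, the two derivatives it transfers onto $\bar u$ could not be matched by the available parabolic smoothing.
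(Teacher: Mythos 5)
Your proposal is correct and follows essentially the same route as the paper: form the difference of two solutions, use linearity of \eqref{comp-llgsp}, test with the difference itself, and close an $L^2$ Gronwall inequality with an $L^1$-in-time coefficient built from $\norm{(u,s)}_{H^3}^2$. Your explicit treatment of the term $-\int_\Om (u\times\De\bar u)\cdot\bar u\,dx$ by integration by parts and the antisymmetry cancellation is exactly the point the paper's terse computation leaves implicit, and it is the right way to handle it.
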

\begin{proof}
Assume that $(u_1,s_1)$ and $(\tilde{u}_1,\tilde{s}_1)$ are two solutions to \eqref{comp-llgsp} in $L^{\infty}([0,T^*),H^{1}(\Om,\Real^{3}))\cap W^{2,1}_{2}(\Om\times[0,T^*),\Real^{3})$. Let$(\bar{u}_1,\bar{s}_1)=(u_1-\tilde{u}_1,s_1-\tilde{s}_1)$, it satisfies the following equation
\begin{equation*}
\begin{cases}
\p_t \bar{u}_1=\al \De \bar{u}_1-u\times \De \bar{u}_1+K_1(\n \bar{u}_1,\n \bar{s}_1)+L_1(\bar{u}_1,\bar{s}_1),\\[1ex]
\p_t \bar{s}_1=-\mbox{div}(A(u)\n \bar{s}_1)+Q_1(\n \bar{u}_1,\n \bar{s}_1)+T_1(\bar{u}_1,\bar{s}_1),
\end{cases}
\end{equation*}
with initial-boundary condition:
\begin{equation*}
\begin{cases}
\bar{u}_1(\cdot,0)=0,\,\,\frac{\p \bar{u}_1}{\p \nu}|_{\p \Om}=0,\\[1ex]
\bar{s}_1(\cdot,0)=0,\,\,\frac{\p \bar{s}_1}{\p \nu}|_{\p \Om}=0.
\end{cases}
\end{equation*}
The above equation indicates $(\p_t\bar{u}_1,\p_t\bar{s}_1)\in L^2([0,T]\times\Om)$. Then by taking $(\bar{u}_1,\bar{s}_1)$ as a test function of the above equation, there holds
\begin{align*}
&\frac{1}{2}\frac{\p }{\p t}\int_{\Om}(|\bar{u}_1|^2+|\bar{s}_1|^2)dx\\
\leq &C(\al, \th, D_0, \norm{\p_tJ_e}_{H^2}+\norm{J_e}_{H^2})(\norm{(u,s)}^2_{H^3}+U+S+1)\int_{\Om}(|\bar{u}_1|^2+|\bar{s}_1|^2)dx\\
\leq &F(t)\int_{\Om}(|\bar{u}_1|^2+|\bar{s}_1|^2)dx,
\end{align*}
where $F(t)\in L^1([0,T])$ for any $0<T<T^*$. Hence, the Gronwall inequality implies $(\bar{u}_1,\bar{s}_1)=(0,0)$ and the proof is finished.
\end{proof}
Proposition \ref{uniq} tells us that $(u_t,s_t)\in L^\infty([0,T],H^2(\Om,\Real^3))\cap L^2([0,T],H^3(\Om,\Real^3))$. If $(u_0,s_0)\in H^5(\Om,\Real^3)$ and $D_0\in C^5(\bar{\Om})$, then a similar argument with that in the proof of Theorem \ref{mth2} shows
$$(u,s)\in L^\infty([0,T],H^5(\Om,\Real^3))\cap L^2([0,T],H^6(\Om,\Real^3)).$$

\subsection{Very regular solutions}\label{ss: main-thm}\

In this subsection, we will apply the method of induction to show the existence of very regular solution $(u,s)$ to \eqref{llgsp1} by considering the initial-Neumann boundary value problem of the equation of $(u_k=\p^k_t u,s_k=\p^k_t s)$ with matching initial-boundary data, that is to prove Theorem \ref{mth3}. In fact, in the above subsection \ref{ss: 1-reg}, we have enhanced regularity of $(u,s)$ by the strategy $\P$ in Section \ref{s: intro}.

For the case of $k>1$, to prove Theorem \ref{mth3} we need to repeat the process of the strategy $\P$ by showing the following property $\P(k)$:
\begin{enumerate}
\item If $(u_0,s_0)\in H^{2k}(\Om,\Real^3)$ with compatibility condition $k-1$ order, then the solution $(u,s)$ with initial date $(u_0,s_0)$ satisfies
$$(\p^i_t u,\p^i_t s)\in L^\infty([0,T], H^{2k-2i}(\Om,\Real^3))\cap L^2([0,T], H^{2k-2i+1}(\Om,\Real^3)), \quad 0<T<T^*,$$
where $i\in \{0,\dots,k\}$;
\item Moreover, if $(u_0,s_0)\in H^{2k+1}(\Om,\Real^3)$, then there holds
$$(\p^i_t u,\p^i_t s)\in L^\infty([0,T], H^{2k-2i+1}(\Om,\Real^3))\cap L^2([0,T], H^{2k-2i+2}(\Om,\Real^3)), \quad 0<T<T^*,$$
where $i\in \{0,\dots,k\}$.
\end{enumerate}

In fact, it is not difficult to see that, in the previous subsection, we have shown this property holds for $k=1$ and $k=2$.

Next, suppose that $\P(k)$ is already established for $k\geq 2$, then we want to prove that $\P(k+1)$ is true. Therefore, we assume $(u_0,s_0)\in H^{2(k+1)}(\Om,\Real^3)$ satisfies the compatibility condition  \eqref{com-cond} at $k$ order. By the property $\P(k)$, we have
   \[(\p^i_t u,\p^i_t s)\in L^\infty([0,T], H^{2k-2i+1}(\Om,\Real^3))\cap L^2([0,T], H^{2k-2i+2}(\Om,\Real^3)),\]
where $0<T<T^*$ and $ i\in \{0,\dots,k\}$.

Furthermore, we know that $(u_k,s_k)\in L^\infty([0,T],H^1(\Om,\Real^3))\cap L^2([0,T],H^2(\Om,\Real^3))$ satisfies the following equation
\begin{equation}\label{comp-llgsp2}
\begin{cases}
\p_t v=\al \De v-u\times \De v+K_k(\n v,\n w)+L_k(v,w)+F_k(u,s),&\text{(x,t)}\in\Om\times \Real^+,\\[1ex]
\p_t w=-\mbox{div}\(A(u)\n w\)+Q_k(\n v,\n w)+T_k(v,w)+Z_k(u,s),&\text{(x,t)}\in\Om\times \Real^+,\\[1ex]
\frac{\p v}{\p \nu}=0,\quad\quad&\text{(x,t)}\in\p \Om\times \Real^+,\\[1ex]
\frac{\p w}{\p \nu}=0,\quad\quad&\text{(x,t)}\in\p \Om\times \Real^+,\\[1ex]
(v(x,0),w(x,0))=(V_k(u_0,s_0),W_k(u_0,s_0)),\quad&x\in\Om.
\end{cases}
\end{equation}

Next, we will adopt the same procedure as in the strategy $\P$ for $(\p^{k}_tu, \p^{k}_ts)$ to get the regular property $\P(k+1)$. Hence, we need to show the following three claims.
\begin{enumerate}
\item If $(u_0,s_0)\in H^{2k+2}(\Om,\Real^3)$,  then we can get a regular solution to \eqref{comp-llgsp2}
$$(v,w)\in L^\infty([0,T],H^2(\Om,\Real^3))\cap L^2([0,T],H^3(\Om,\Real^3))$$
with the $k$-order compatibility condition of initial date at boundary.
\item There holds true $(\p^k_t u,\p^k_t s)\equiv(v,w)$ as long as one can show the uniqueness of solution to \eqref{comp-llgsp2} in the space $L^\infty([0,T],H^1(\Om,\Real^3))\cap L^2([0,T],H^2(\Om,\Real^3))$. Moreover, It implies $$(u,s)\in  L^\infty([0,T],H^{2k+2}(\Om,\Real^3))\cap L^2([0,T],H^{2k+3}(\Om,\Real^3))$$ by using equation \eqref{comp-llgsp3} again.
\item If $(u_0,s_0)\in H^{2k+3}(\Om,\Real^3)$, then, one can infer that $$(u,s)\in  L^\infty([0,T],H^{2k+3}(\Om,\Real^3))\cap L^2([0,T],H^{2k+4}(\Om,\Real^3))$$ by repeating the same arguments as one proves item $(1)$ of the property $\P$ on $(\p^k_t u,\p^k_t s)$.
\end{enumerate}

In the below context, we will show the above three claims step by step.
\subsubsection{\textbf{Regular solution to \eqref{comp-llgsp2}}}
 Now, we repeat the process of Galerkin approximation in Section \ref{s: reg-sol} to seek a regular solution $(v,w)\in L^\infty([0,T], H^2(\Om,\Real^3))\cap L^2([0,T], H^3(\Om,\Real^3))$ to equation \eqref{comp-llgsp2} when $(V_k,W_k)\in H^2(\Om,\Real^3)$. By a similar argument of enhancing regularity to that in the previous subsection \ref{ss: 1-reg}, we can obtain that $(v,w)\in L^\infty([0,T], H^3(\Om,\Real^3))\cap L^2([0,T],H^4(\Om,\Real^3))$ under an improved initial value assumption that $(V_k, W_k)\in H^3(\Om,\Real^3)$. To this end, first we need to show the estimates of the nonhomogeneous terms $F_k$ and $Z_k$ in \eqref{comp-llgsp2} satisfy the following
 \begin{prop}\label{es-F_kZ_k}
 Assume that the property $\P(k)$ has been established. Suppose that for any $0\leq i\leq k$,
 $$\p^i_tJ_e\in L^\infty([0,T], H^{2k-2i+2}(\Om,\Real^3))\cap L^2([0,T], H^{2k-2i+3}(\Om,\Real^3)).$$
 Then there hold
 $$(F_i,Z_i)\in L^\infty([0,T], H^{2k-2i+1}(\Om,\Real^3))\cap  L^2([0,T], H^{2k-2i+2}(\Om,\Real^3)),$$
 where $0<T<T^*$.
\end{prop}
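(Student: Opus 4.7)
The plan is a careful bookkeeping of Sobolev multiplication, based on the following observation: every factor $u_a$ or $s_a$ that appears in $F_i$ or $Z_i$ has index $a \leq i-1$, so the inductive hypothesis $\P(k)$ gives $u_a, s_a \in L^\infty([0,T], H^{2k-2a+1}(\Om)) \cap L^2([0,T], H^{2k-2a+2}(\Om))$, with Sobolev index at least $2k-2i+3$. In other words, each such factor enjoys two Sobolev degrees in excess of the target index $2k-2i+1$, and this excess is exactly what is consumed by the derivatives ($\n$, $\De$, $\mbox{div}$) acting on those factors in the definitions of $F_i$ and $Z_i$.

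First, I would decompose $F_i$ and $Z_i$ according to their defining finite sums and treat each summand separately. The nonlinear compositions $\bar{h}(u_a)$ and the Fa\`a di Bruno-type terms $\n^\ell_u \Phi(u) \# u_{j_1} \# \cdots \# u_{j_r}$ inherit the Sobolev regularity of their $u$-arguments: $h_d$ is bounded on $W^{k,p}$ by Lemma \ref{es-h_d}, the extended $\Phi$ is smooth on $\overline{B}_{\sqrt{1+\de}}(0)$, and $H^2(\Om) \hookrightarrow L^\infty(\Om)$ together with the algebra property of $H^m$ for $m \geq 2$ (Lemma \ref{algebra2}) takes care of the compositions. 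Inside $R_i$ and inside $\bar{h}(u_l)$ all indices $j_l$ are strictly less than the order to which they contribute, so nothing pathological happens there.

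Next, for the $L^\infty$-in-time half of the target, I apply Lemma \ref{algebra2} to each summand pointwise in $t$: the resulting Sobolev index is the minimum of the individual factor indices, and the derivative count above shows this minimum is always $\geq 2k - 2i + 1$. In the borderline case $i = k$, where the target reduces to $H^1$, I invoke Lemma \ref{algebra1} on summands such as $u_a \# \De u_b$ in which $\De u_b$ sits only in $H^1$. For the $L^2$-in-time half, I pick exactly one factor per summand---typically the one realizing the worst regularity---and replace its $L^\infty$-in-time bound with the $L^2 H^{2k-2a+2}$ bound from the second half of $\P(k)$, leaving the remaining factors in $L^\infty$ in time; H\"older in $t$ then gives the desired $L^2 H^{2k-2i+2}$ conclusion.

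The most delicate summands are the $J_e$-dependent terms $\mbox{div}(u_a \# \p^b_t J_e)$ in $Z_i$, where $a + b = i$ and $a \leq i-1$. Here $b$ can equal $i$ (when $a = 0$) and the divergence costs one derivative. It is precisely for this configuration that the hypothesis $\p^b_t J_e \in L^\infty H^{2k-2b+2} \cap L^2 H^{2k-2b+3}$---one Sobolev degree higher than what is guaranteed for $u$ and $s$---is needed: it compensates the $\mbox{div}$ and still lands in the correct target space. The main obstacle is purely notational bookkeeping: one must verify uniformly across all summands and all $0 \leq i \leq k$ that the minimum-index count lands on or above the target, with special attention to the borderline $i = k$ case where Lemma \ref{algebra1} replaces Lemma \ref{algebra2}; once this verification is carried out, no further analytic difficulty arises.
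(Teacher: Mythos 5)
Your proposal is correct and follows essentially the same route as the paper's own proof: a term-by-term decomposition of $F_i$ and $Z_i$, the observation that $\P(k)$ places every factor $u_a,s_a$ with $a\leq i-1$ in $H^{2k-2a+1}\supseteq H^{2k-2i+3}$ so that the two excess Sobolev degrees absorb the derivatives, Lemmas \ref{algebra1}--\ref{algebra2} for the products, Lemma \ref{es-h_d} and smoothness of the extended $\Phi$ for the compositions, and the extra degree of regularity hypothesized on $\p^b_tJ_e$ to compensate the divergence in the $b=i$ configuration. Your explicit remark that Lemma \ref{algebra1} (rather than Lemma \ref{algebra2}) must be invoked in the borderline case $i=k$, e.g.\ for $u_l\#\De u_j$ where $\De u_j$ sits only in $H^1$, is a point the paper's item $(d)$ actually glosses over.
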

\begin{proof}
Firstly, we show
$$F_i\in L^\infty([0,T], H^{2k-2i+1}(\Om,\Real^3))\cap  L^2([0,T], H^{2k-2i+2}(\Om,\Real^3)).$$
According to the definition of $F_{i}$ in \eqref{comp-llgsp3}, we only need to consider the following terms.
\begin{enumerate}
	\item[$(a)$] $I=\n u_s\#\n u_j\#u_l$, where $s+j+l=i$ and $0\leq s,j,l\leq i-1$. For $l\in \{1,\cdots,i-1\}$,
	$$u_l\in L^\infty([0,T], H^{2k-2i+3}(\Om,\Real^3))\cap  L^2([0,T], H^{2k-2i+4}(\Om,\Real^3)).$$
	Thus $\n u_s$, $\n u_j$ and $u_l$ are all in $L^\infty([0,T], H^{2k-2i+2}(\Om,\Real^3))$. Since $2k-2i+2\geq 2$, by Lemma \ref{algebra2} we have
	$$I\in L^\infty([0,T], H^{2k-2i+2}(\Om,\Real^3)).$$
	
	\item[$(b)$] $II=\n u_s\#\n u_j\#(\bar{h}(u_l)+s_l)$, where $s+j+l=i$ and $0\leq s,j,l\leq i-1$. The almost same argument as that in $(a)$ shows
	$$II\in L^\infty([0,T], H^{2k-2i+2}(\Om,\Real^3)).$$
	
	\item[$(c)$] $III= \n u_j\#(\bar{h}(u_l)+s_l)$, where $j+l=i$ and $0\leq j,l\leq i-1$. The almost same argument as that in $(a)$ shows
	$$III\in L^\infty([0,T], H^{2k-2i+2}(\Om,\Real^3)).$$
	
	\item[$(d)$] $IV=u_l\#\De u_j$, where $j+l=i$ and $0\leq j,l\leq i-1$. Since
	$$u_l\in L^\infty([0,T], H^{2k-2i+3}(\Om,\Real^3))\cap L^2([0,T], H^{2k-2i+4}(\Om,\Real^3))$$ and
	$$\De u_j\in L^\infty([0,T], H^{2k-2i+1}(\Om,\Real^3))\cap L^2([0,T], H^{2k-2i+2}(\Om,\Real^3)),$$
	Lemma \ref{algebra2} tells us
	$$IV\in L^\infty([0,T], H^{2k-2i+1}(\Om,\Real^3))\cap  L^2([0,T], H^{2k-2i+2}(\Om,\Real^3)).$$
	
	\item[$(e)$] $V=R_i$. By almost the same argument as that in $(a)$ we also have
	$$V\in L^\infty([0,T], H^{2k-2i+2}(\Om,\Real^3)).$$
\end{enumerate}

Next, we turn to the estimates of $Z_i$ and take a similar discussion to that we derive the estimate of $F_i$. It is not difficult to find that one need only to consider the following two terms.
\begin{enumerate}
	\item[$(f)$] $I^{\prime}=u_s\#\n^2 s_j\#u_l$, where $s+j+l=i$ and $0\leq s,j,l\leq i-1$. For $l\in \{1,\cdots,i-1\}$, it is easy to see that $u_s$ and $u_l$ are in $L^\infty([0,T], H^{2k-2i+3}(\Om,\Real^3))$. Then, by Lemma \ref{algebra2} we have
	$$u_s\ast u_l\in L^\infty([0,T], H^{2k-2i+3}(\Om,\Real^3)).$$
Thus, combining the fact and the following
	$$\n^2 s_j\in L^\infty([0,T], H^{2k-2i+1}(\Om,\Real^3))\cap L^2([0,T], H^{2k-2i+2}(\Om,\Real^3)),$$
we have
	$$I^\prime\in L^\infty([0,T], H^{2k-2i+1}(\Om,\Real^3))\cap  L^2([0,T], H^{2k-2i+2}(\Om,\Real^3)),$$
	where we have used Lemma \ref{algebra1}.
	
	\item[$(g)$] $II^\prime=\n u_s\#\p^j_t J_e+u_s\#\n \p^j_t J_e$, where $s+j=i$ and $s<i$. Since there hold that
	$$u_s\in L^\infty([0,T], H^{2k-2i+3}(\Om,\Real^3))\cap  L^2([0,T], H^{2k-2i+4}(\Om,\Real^3))$$
	and
	$$\p^j_tJ_e\in L^\infty([0,T], H^{2k-2i+2}(\Om,\Real^3))\cap L^2([0,T], H^{2k-2i+3}(\Om,\Real^3)),$$
	it is easy to conclude that there holds
	$$II^\prime\in L^\infty([0,T], H^{2k-2i+1}(\Om,\Real^3))\cap  L^2([0,T], H^{2k-2i+2}(\Om,\Real^3)).$$
	\end{enumerate}
\end{proof}
\begin{rem}
	Moreover, if
	$$\p_t^{k+1} J_e\in L^\infty([0,T], L^{2}(\Om,\Real^3))\cap L^2([0,T], H^{1}(\Om,\Real^3)),$$
	by almost the same argument as that in above Proposition \ref{es-F_kZ_k}, we obtain
	$$(\p_t F_i,\p_t Z_{i})\in L^{2}([0,T],H^{2k-2i}(\Om,\Real^3)),$$
	where $i\in\{0,\dots,k\}$.
\end{rem}

Now, we turn to considering the Galerkin approximation associated to \eqref{comp-llgsp2} as following
 \begin{equation}\label{app-comp-llgsp1}
 \begin{cases}
 \p_t v^n=P_n\(\al\De v^n-u\times \De v^n+K_k(\n v^n,\n w^n)+L_k(v^n,w^n)+F_k\)&\text{(x,t)}\in\Om\times \Real^+,\\[1ex]
 \p_t w^n=P_n\(-\mbox{div}\(A(u)\n w^n\)+Q_k(\n v^n,\n w^n)+T_k(v^n,w^n)+Z_k\)&\text{(x,t)}\in\Om\times \Real^+,\\[1ex]
 (v^n(x,0),w^n(x,0)=(P_{n}(V_k),P_{n}(W_k))\quad&\text{x}\in\Om.
 \end{cases}
 \end{equation}
Obviously, the equation admits a solution $(v^n,w^n)$ in $H_n$, defined on $\Om\times [0,T^n)$, where $T^n$ is the maximal existence time. In fact, it is easy to see that $T^n=T^*$, since equation \eqref{comp-llgsp2} is a linear system and its coefficient is well controlled on $[0,T]$ for any $T<T^*$.

To show the $H^3$-estimates of $v^n$, we firstly choose $v^n$ and $-\De^2 v^n$ as test functions. A simple computation shows
 \begin{equation}\label{L^2-es-v^n}
 \begin{aligned}
 &\frac{1}{2}\frac{\p}{\p_t}\int_{\Om}|v^n|^2dx+\al \int_{\Om}|\n v^n|^2dx\\
 \leq& C(\al,\Phi )(\norm{u}^2_{H^3}+\norm{u}^4_{H^2}+\norm{s}^4_{H^2}+1)\int_{\Om}|v^n|^2dx+C\int_{\Om}|w^n|^2dx+\norm{F_k}^2_{L^2},
 \end{aligned}
 \end{equation}
 and
 \begin{equation}\label{H^2-es-v^n}
 \begin{aligned}
 &\frac{1}{2}\frac{\p}{\p_t}\int_{\Om}|\De v^n|^2dx+\al \int_{\Om}|\n\De v^n|^2dx\\
 =&-\int_{\Om}\<\n u\times \De v^n+\n K_k+\n L_k+\n F_k,\n \De v^n\>dx.
 \end{aligned}
 \end{equation}
 Here
 $$$$
 \begin{align*}
\left|\int_{\Om}\<\n u\times \De v^n,\n \De v^n\>dx\right|\leq& C(\ep,\al)\norm{u}^2_{H^3(\Om)}\int_{\Om}|\De v^n|^2dx+\ep \al \int_{\Om}|\n \De v^n|^2dx,\\
\left|\int_{\Om}\<\n K_k,\n \De v^n\>dx\right|\leq& C(\ep,\al,\Phi)(\norm{u}^2_{H^3(\Om)}+\norm{u}^4_{H^2(\Om)}+1)\norm{v^n}^2_{H^2(\Om)}\\
&+ \ep \al \int_{\Om}|\n \De v^n|^2dx,\\
\left|\int_{\Om}\<\n L_k ,\n \De v^n\>dx\right|
\leq& C(\ep,\al,\Phi)(\norm{s}^2_{H^3(\Om)}+\norm{u}^2_{H^2(\Om)}+1)(\norm{s}^2_{H^2(\Om)}+\norm{u}^2_{H^2(\Om)}+1)\\
& \cdot(\norm{v^n}^2_{H^1(\Om)}+\norm{w^n}^2_{H^1(\Om)})+\ep \al \int_{\Om}|\n \De v^n|^2dx.
 \end{align*}
To combining the above equalities, \eqref{H^2-es-v^n} can be rewritten as the following
  \begin{equation}\label{newH^2-es-v^n}
 \begin{aligned}
&\frac{\p}{\p_t}\int_{\Om}|\De v^n|^2dx+\al \int_{\Om}|\n\De v^n|^2dx\\
\leq &C(\al, \Phi)(\norm{u}^2_{H^3(\Om)}+\norm{s}^2_{H^3(\Om)}+\norm{u}^4_{H^2(\Om)}+1)(\norm{s}^2_{H^2(\Om)}+\norm{u}^2_{H^2(\Om)}+1)\\
&(\norm{v^n}^2_{H^2(\Om)}+\norm{w^n}^2_{H^2(\Om)})+\norm{F_{k}}^2_{H^1(\Om)}.
 \end{aligned}
 \end{equation}

Moreover, to show the $H^2$-estimates of $w^n$, we choose $w^n$ and $-\De^2 w^n$ as test functions. A simple computation shows
 \begin{equation}\label{L^2-es-w^n}
 \begin{aligned}
 &\frac{1}{2}\frac{\p}{\p_t}\int_{\Om}|w^n|^2dx+(1-\theta) \int_{\Om}|D_0\n w^n|^2dx\\
 \leq& C(\th,\norm{D_0}_{C^1},\norm{J_e}_{H^2(\Om)})\norm{v^n}_{H^1(\Om)}+C\int_{\Om}|w^n|^2dx+\norm{Z_k}^2_{L^2(\Om)},
 \end{aligned}
 \end{equation}
 and
 \begin{equation}\label{H^2-es-w^n}
 \begin{aligned}
 &\frac{1}{2}\frac{\p}{\p_t}\int_{\Om}|\De w^n|^2dx\\
 =&-\int_{\Om}\<-\n (\mbox{div}(A(u)\n w^n))+\n Q_k+\n T_k+\n Z_k,\n \De w^n\>dx.
 \end{aligned}
 \end{equation}
We can see easily that
\begin{align*}
 \int_{\Om}\<\n \(\mbox{div}(A(u)\n w^n)\),\n \De w^n\>dx\leq&C(\th ,\norm{D_0}_{C^2},c_0)(\norm{u}^2_{H^3(\Om)}+\norm{u}^4_{H^3(\Om)}+1)\norm{w^n}^2_{H^2(\Om)}\\
 & -\frac{3}{4}(1-\th)\int_{\Om}D_0|\n \De w^n|^2dx,
\end{align*}
\begin{align*}
\left|\int_{\Om}\<\n Q_k ,\n \De w^n\>dx\right|\leq &C(\ep, \th ,\norm{D_0}_{C^2},c_0,\norm{J_e}_{H^2(\Om)})\norm{v^n}^2_{H^2(\Om)}\\
&+\ep \th \int_{\Om}|\n \De w^n|^2dx,
\end{align*}
and
\begin{align*}
\left|\int_{\Om}\<\n T_k ,\n \De w^n\>dx\right|\leq& C(\ep, \th ,\norm{D_0}_{C^2},c_0,\norm{J_e}_{H^2(\Om)})(\norm{s}^2_{H^3(\Om)}+\norm{u}^2_{H^3(\Om)}\norm{u}^2_{H^3(\Om)}\\
&+\norm{u}^2_{H^3(\Om)}+1)(\norm{v^n}^2_{H^2(\Om)}+\norm{w^n}^2_{H^2(\Om)})\\
&+\ep\th \int_{\Om}|\n \De w^n|^2dx.
\end{align*}
In view of the above three inequalities, from \eqref{H^2-es-w^n} we deduce the following inequality
\begin{equation}\label{newH^2-es-w^n}
\begin{aligned}
&\frac{\p}{\p t}\int_{\Om}|\De w^n|^2dx+(1-\th)\int_{\Om}D_0|\n \De w^n|^2dx\\
\leq& C(\th, c_0,\norm{D_0}_{C^2},\norm{J_e}_{H^2(\Om)})(\norm{s}^4_{H^3(\Om)}+\norm{u}^2_{H^3(\Om)}+1)(\norm{v^n}^2_{H^2(\Om)}\\
&+\norm{w^n}^2_{H^2(\Om)})+\norm{Z_{k}}^2_{H^1(\Om)}.
\end{aligned}
\end{equation}
Hence, by combining inequalities \eqref{L^2-es-v^n}, \eqref{newH^2-es-v^n}, \eqref{L^2-es-w^n} and \eqref{newH^2-es-w^n} we get the following
\begin{equation}\label{H^2-es-v^nw^n}
\begin{aligned}
&\frac{\p}{\p t}(\norm{v^n}^2_{H^2(\Om)}+\norm{w^n}^2_{H^2(\Om)})+\al\int_{\Om}|\n \De v^n|^2dx+(1-\th)\int_{\Om}D_0|\n \De w^n|^2dx\\
\leq& C(\th, \al, \Phi, c_0,\norm{D_0}_{C^2},\norm{J_e}_{H^2(\Om)})(\norm{v^n}^2_{H^2(\Om)}+\norm{w^n}^2_{H^2(\Om)})p(t) + q(t).
\end{aligned}
\end{equation}
Here
$$q(t)=\norm{F_k}^2_{H^1(\Om)}+\norm{Z_k}^2_{H^1(\Om)}$$
and
$$p(t)=(\norm{s}^4_{H^3(\Om)}+\norm{u}^4_{H^3(\Om)}+1)(\norm{s}^2_{H^3(\Om)}+\norm{u}^2_{H^3(\Om)}+1).$$
Moreover, Proposition \ref{es-F_kZ_k} tells us
$$q(t)\leq C(T) \quad\mbox{and}\quad p(t)=(\norm{s}^4_{H^3(\Om)}+\norm{u}^4_{H^3(\Om)}+1)(\norm{s}^2_{H^3(\Om)}+\norm{u}^2_{H^3(\Om)}+1)\leq C(T).$$
Then, by the Gronwall inequality we infer from (\ref{H^2-es-v^nw^n})
\begin{equation}\label{newH^2-es-v^nw^n}
\begin{aligned}
&\sup_{0< t\leq T}(\norm{v^n}^2_{H^2(\Om)}+\norm{w^n}^2_{H^2(\Om)})+\al\int_{0}^{T}\int_{\Om}|\n \De v^n|^2dxdt\\
&+(1-\th)\int_{0}^{T}\int_{\Om}|\n \De w^n|^2dxdt\leq C(T, \norm{V_k}_{H^2(\Om)}+\norm{W_k}_{H^2(\Om)}),
\end{aligned}
\end{equation}
where $0<T<T^*$. Here we have used the fact
$$\norm{V_k}_{H^2(\Om)}+\norm{W_k}_{H^2(\Om)}\leq C(\al, \th, \norm{D_0}_{C^{2k+1}(\Om)}, \norm{u_0}_{H^{2k+2}(\Om)}+\norm{s_0}_{H^{2k+2}(\Om)}).$$

Therefore, the compactness lemma (Lemma \ref{A-S}) claims that the limiting map
$$(v,w)\in L^{\infty}([0,T],H^2(\Om,\Real^3))\cap L^2([0,T],H^3(\Om,\Real^3))$$
of sequence $\{(v^n, w^n)\}$ as $n\to \infty$ is just a solution to \eqref{comp-llgsp2}, which satisfies the same estimates as \eqref{H^2-es-v^nw^n} and \eqref{newH^2-es-v^nw^n} with replacing $(v^n,w^n)$ by $(v,w)$.

\subsubsection{\textbf{Uniqueness of Solution to \eqref{comp-llgsp2}}}
In this part, we will show the uniqueness of the solutions to \eqref{comp-llgsp2} in $L^{\infty}([0,T],H^1(\Om,\Real^3))\cap L^2([0,T],H^2(\Om,\Real^3))$.
\begin{prop}
In the space $L^{\infty}([0,T],H^1(\Om,\Real^3))\cap L^2([0,T],H^2(\Om,\Real^3))$, there exists a unique solution to \eqref{comp-llgsp2}.
\end{prop}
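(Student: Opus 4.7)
The proposition makes two claims: the existence and the uniqueness of a solution to \eqref{comp-llgsp2} in $L^{\infty}([0,T], H^1(\Om,\Real^3))\cap L^2([0,T], H^2(\Om,\Real^3))$. The existence half is already in hand: the Galerkin construction of the preceding subsubsection produced a solution $(v,w)\in L^{\infty}([0,T], H^2(\Om,\Real^3))\cap L^2([0,T], H^3(\Om,\Real^3))$ with the bound \eqref{newH^2-es-v^nw^n}, and this regularity class embeds continuously into the one named in the proposition. So my plan is to cite that construction for existence and to concentrate the work on uniqueness.

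For uniqueness, let $(v_1,w_1)$ and $(v_2,w_2)$ be two solutions in the prescribed space and set $(\bar v,\bar w)=(v_1-v_2,w_1-w_2)$. Since \eqref{comp-llgsp2} is linear in the unknown, with coefficients that depend only on the reference pair $(u,s)$ supplied by property $\P(k)$, and with inhomogeneous terms $F_k(u,s), Z_k(u,s)$ that cancel under subtraction, $(\bar v,\bar w)$ satisfies the homogeneous problem
\begin{align*}
\p_t \bar v &= \al\De\bar v - u\times\De\bar v + K_k(\n\bar v,\n\bar w) + L_k(\bar v,\bar w),\\
\p_t \bar w &= -\mbox{div}\(A(u)\n\bar w\) + Q_k(\n\bar v,\n\bar w) + T_k(\bar v,\bar w),
\end{align*}
with vanishing initial data and homogeneous Neumann boundary conditions. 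Read off the equations themselves: every term on the right-hand side lies in $L^2([0,T], L^2(\Om))$ under the assumed regularity of $(\bar v,\bar w)$ and $(u,s)$, so $\p_t\bar v, \p_t\bar w \in L^2([0,T], L^2(\Om))$, which legitimizes the formal identity $\int_\Om\p_t\bar v\cdot\bar v\,dx=\tfrac{1}{2}\tfrac{d}{dt}\|\bar v\|_{L^2}^2$.

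The plan is then a plain $L^2$ energy estimate: pair the first equation with $\bar v$ and the second with $\bar w$, integrate over $\Om$, and integrate by parts using the Neumann condition. The terms $\al\De\bar v$ and $-\mbox{div}(A(u)\n\bar w)$ yield the dissipative quantities $\al\|\n\bar v\|_{L^2}^2$ and at least $(1-\th(1+\de))c_0\|\n\bar w\|_{L^2}^2$, using the positivity of $-A(u)$ recorded in Section \ref{s: reg-sol}. The awkward cross-product $\int_\Om(u\times\De\bar v)\cdot\bar v\,dx$ is rewritten via the identity $(u\times\De\bar v)\cdot\bar v = -(u\times\bar v)\cdot\De\bar v$; integrating by parts and using $(u\times\n\bar v)\cdot\n\bar v=0$ component-wise reduces it to a remainder of the form $\int_\Om(\n u\times\bar v)\cdot\n\bar v\,dx$, which is absorbed by Young's inequality at a cost controlled by $\|u\|_{H^3}^2\|\bar v\|_{L^2}^2$ after the embedding $H^3(\Om)\hookrightarrow W^{1,\infty}(\Om)$ in dimension three. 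The lower-order operators $K_k,L_k,Q_k,T_k$ are bilinear in $(\bar v,\bar w,\n\bar v,\n\bar w)$ with coefficients built from $(u,s,J_e)$ and derivatives; each gradient factor is absorbed into the dissipation by Young, leaving only $\|\bar v\|_{L^2}^2+\|\bar w\|_{L^2}^2$ multiplied by a time-dependent factor. The outcome is a differential inequality
\[
\frac{d}{dt}\(\|\bar v\|_{L^2}^2+\|\bar w\|_{L^2}^2\)\leq F(t)\(\|\bar v\|_{L^2}^2+\|\bar w\|_{L^2}^2\),
\]
with $F\in L^1([0,T])$, because every contribution to $F$ is controlled either by $\|u\|_{H^3}^2+\|s\|_{H^3}^2+\|J_e\|_{H^2}^2$ (bounded in $L^\infty_t$ by $\P(k)$ and the hypotheses on $J_e$) or by $\|u\|_{H^4}^2+\|s\|_{H^4}^2$ (in $L^1_t$ by $\P(k)$). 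With $(\bar v,\bar w)(0)=0$, Gronwall's inequality forces $\bar v\equiv 0$ and $\bar w\equiv 0$ on $[0,T]$.

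The main obstacle is not any single inequality but the bookkeeping required to stay inside the low-regularity class $L^\infty_tH^1\cap L^2_tH^2$ while handling the cross-product and the divergence-form spin operator: integration by parts must be justified via the Neumann trace on $\bar v,\bar w$, and every coefficient must be traced back through $\P(k)$ to confirm its time integrability. The computation parallels Proposition \ref{uniq} in the previous subsection, with the order $1$ replaced by the present order $k$, so the structural steps are already tested; only the set of nonhomogeneous building blocks in the background is larger.
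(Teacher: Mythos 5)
Your proposal is correct and follows essentially the same route as the paper: take the difference of two solutions, observe it solves the homogeneous linear system with zero initial data and Neumann conditions, test with $(\bar v,\bar w)$, absorb the gradient contributions (including the $u\times\De\bar v$ term after integration by parts) into the dissipation, and conclude by Gronwall with a coefficient $F\in L^1([0,T])$ controlled through $\P(k)$; existence is indeed taken from the preceding Galerkin construction, exactly as in the paper.
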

\begin{proof}
Let $(v,w)$ and $(\tilde{v},\tilde{w})$ are two solutions. We denote $(\bar{v},\bar{w})=(v-\tilde{v},w-\tilde{w})$, which satisfies the following equation
\begin{equation*}
\begin{cases}
\p_t \bar{v}=\al \De \bar{v}-u\times \De \bar{v}+K_k(\n \bar{v},\n \bar{w})+L_k(\bar{v},\bar{w}),&\text{(x,t)}\in\Om\times (0,T^*),\\[1ex]
\p_t \bar{w}=-\mbox{div}\(A(u)\n \bar{w}\)+Q_k(\n \bar{v},\n \bar{w})+T_k(\bar{v},\bar{w}),&\text{(x,t)}\in\Om\times (0,T^*),\\[1ex]
\frac{\p \bar{v}}{\p \nu}=0,\quad\quad&\text{(x,t)}\in\p \Om\times [0,T^*),\\[1ex]
\frac{\p \bar{w}}{\p \nu}=0,\quad\quad&\text{(x,t)}\in\p \Om\times [0,T^*),\\[1ex]
(\bar{v}(x,0),\bar{w}(x,0))=(0,0),\quad&x\in\Om.
\end{cases}
\end{equation*}
Thus, for any fixed $0<T<T^*$, by choosing $(\bar{v},\bar{w})$ as text function to the above equation, there hold true
\begin{align*}
&\frac{\p}{\p t}\int_{\Om}|\bar{v}|^2dx+\al \int_{\Om}|\n \bar{v}|^2dx\\
\leq &C(\al, \Phi)(\norm{u}^2_{H^3(\Om)}+\norm{s}^2_{H^2(\Om)}+1)\int_{\Om}|\bar{v}|^2dx +C\int_{\Om}|\bar{w}|^2dx,
\end{align*}
and
\begin{align*}
&\frac{\p}{\p t}\int_{\Om}|\bar{w}|^2dx+(1-\th) \int_{\Om}|\n \bar{w}|^2dx\\
\leq& C(\th,\norm{D_0}_{C^1(\Om)} \norm{J_e}^2_{H^2(\Om)})\norm{s}^2_{H^3(\Om)}\left(\int_{\Om}(|\bar{v}|^2+|\bar{w}|^2)dx\right).
\end{align*}
It follows
\begin{align*}
&\frac{\p}{\p t}\int_{\Om}(|\bar{v}|^2+|\bar{w}|^2)dx\\
\leq&C(\al,,\th,\Phi,\norm{D_0}_{C^1(\Om)},\norm{J_e}_{H^2(\Om)})(\norm{u}^2_{H^3(\Om)}+\norm{s}^2_{H^3(\Om)}+1)\left(\int_{\Om}(|\bar{v}|^2+|\bar{w}|^2)dx\right).
\end{align*}
Thus, the Gronwall inequality tells us that, for any $0<t<T^*$,
$$(\bar{v}(x,t),\bar{w}(x,t))=((\bar{v}(x,0),\bar{w}(x,0)))=(0,0).$$
\end{proof}

Immediately, $(u_k,s_k)\equiv(v,w)$ follows the uniqueness result, since $(u_k,s_k)$ is also a solution to \eqref{comp-llgsp2} in the space $L^{\infty}([0,T],H^1(\Om,\Real^3))\cap L^2([0,T],H^2(\Om,\Real^3))$.\medskip

\subsubsection{\textbf{The proof of property $\P(k+1)$}}\
Next, we are in the position to show the item $(1)$ of property $\P(k+1)$ holds true if $(u_0,s_0)\in H^{2k+2}(\Om,\Real^3)$ and the solution to \eqref{llgsp2} satisfies the property $\P(k)$.
\begin{prop}\label{ve-reg-us}
If $(u_0,s_0)\in H^{2k+2}(\Om,\Real^3)$ and the property $\P(k)$ holds, then we have
\begin{equation}\label{reg-us}
(\p^i_t u,\p^i_t s)\in L^\infty([0,T], H^{2(k+1)-2i}(\Om,\Real^3))\cap L^2([0,T], H^{2(k+1)-2i+1}(\Om,\Real^3)),
\end{equation}
where $0<T<T^*$ and $i\in \{0,\dots,k+1\}$.
\end{prop}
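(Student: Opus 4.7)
The plan is to carry out the strategy $\P$ at level $k+1$, building on the Galerkin-plus-uniqueness construction already developed in this subsection for the equation \eqref{comp-llgsp2} of the $k$-th time derivative. The induction hypothesis $\P(k)$ supplies the regularity of $(\p^i_t u, \p^i_t s)$ for $0 \leq i \leq k$; in particular $(u_k, s_k) \in L^\infty([0,T], H^1) \cap L^2([0,T], H^2)$. The $k$-th compatibility condition, inherited from $(u_0, s_0) \in H^{2k+2}$ together with the $(k-1)$-th compatibility in $\P(k)$, guarantees that $(V_k, W_k) \in H^2(\Om)$ and satisfies the Neumann boundary condition, which is precisely the input needed for the Galerkin construction.

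First I would invoke the construction of the two previous subsubsections to produce a solution $(v, w) \in L^\infty([0,T], H^2) \cap L^2([0,T], H^3)$ to \eqref{comp-llgsp2}, using the bounds of $F_k$ and $Z_k$ in Proposition \ref{es-F_kZ_k}. Then I would apply the uniqueness proposition in the class $L^\infty([0,T], H^1) \cap L^2([0,T], H^2)$: since $(u_k, s_k)$ is also a solution of \eqref{comp-llgsp2} and lies in that class by $\P(k)$, we must have $(v, w) \equiv (u_k, s_k)$. This upgrades $(\p^k_t u, \p^k_t s)$ to $L^\infty H^2 \cap L^2 H^3$, settling the case $i = k$ of \eqref{reg-us}. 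The case $i = k+1$ is then immediate: from \eqref{comp-llgsp3} at order $k$, the right-hand sides lie in $L^\infty L^2 \cap L^2 H^1$, hence so do $u_{k+1} = \p_t u_k$ and $s_{k+1} = \p_t s_k$.

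The remaining values $i \in \{0, 1, \dots, k-1\}$ are handled by a downward bootstrap in $i$. For each such $i$, I rewrite the first equation of \eqref{comp-llgsp3} as
$$(\al I - u \times) \De u_i = \p_t u_i - K_i(\n u_i, \n s_i) - L_i(u_i, s_i) - F_i(u, s),$$
and note that, since $|u| = 1$, the matrix $\al I - u \times$ has determinant $\al(\al^2 + 1) > 0$ and smooth inverse. Given the inductively known regularity of $u_{i+1} = \p_t u_i$ in $L^\infty H^{2k - 2i} \cap L^2 H^{2k - 2i + 1}$ and control of the lower-order terms in the same class (an extension of Proposition \ref{es-F_kZ_k}), this yields $\De u_i \in L^\infty H^{2k - 2i} \cap L^2 H^{2k - 2i + 1}$. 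The Neumann condition $\frac{\p u_i}{\p \nu}|_{\p \Om} = 0$ coming from the order-$k$ compatibility allows the equivalent-norm Lemma \ref{eq-norm} to promote this to $u_i \in L^\infty H^{2(k+1) - 2i} \cap L^2 H^{2(k+1) - 2i + 1}$. A parallel argument for the second equation of \eqref{comp-llgsp3}, using the uniform ellipticity of $-\mbox{div}(A(u) \n \cdot)$ and $D_0 \in C^k(\bar\Om)$, gives the analogous bound for $s_i$.

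The main obstacle is the nonlinear bookkeeping: at every bootstrap step one must verify that the terms $K_i, L_i, F_i, Q_i, T_i, Z_i$, which are sums of products of time derivatives of $u$, $s$, $h_d(u)$, $\n_u\Phi(\J(u))$ and $J_e$, actually lie in the Sobolev space needed to close the estimate. This is where repeated use of Lemma \ref{algebra2} (and Lemma \ref{algebra1} for mixed products of one $H^1$ and one $H^m$ factor), together with Lemma \ref{es-h_d} for the nonlocal term and the hypotheses on $\p^i_t J_e$ imposed in Theorem \ref{mth3}, becomes essential. A secondary but crucial point is that the compatibility conditions of Definition \ref{def-comp} have been precisely calibrated so that every Neumann boundary condition required by Lemma \ref{eq-norm} is automatically satisfied at each stage of the induction.
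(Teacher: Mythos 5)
Your proposal is correct and follows essentially the same route as the paper: identify $(u_k,s_k)$ with the regular Galerkin solution $(v,w)$ of \eqref{comp-llgsp2} via the uniqueness proposition, read off the case $i=k+1$ directly from \eqref{comp-llgsp3}, and then run a downward elliptic bootstrap in the order of the time derivative using Proposition \ref{es-F_kZ_k}, Lemmas \ref{algebra1}--\ref{algebra2} and \ref{es-h_d}, and the Neumann conditions supplied by the compatibility hypotheses (the paper phrases this as an induction on $k+1-l$, which is the same bootstrap). Your explicit remark that $\al I-u\times$ is invertible with determinant $\al(\al^2+1)$ makes precise a step the paper leaves implicit before invoking the $L^p$-estimates.
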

\begin{proof}
We use mathematical induction on $k+1-l$. When $l=0$, we have
\begin{equation*}
\begin{cases}
u_{k+1}=\al \De u_k-u\times \De u_k+K_k(\n u_k,\n s_k)+L_k(s_k,u_k)+F_k(u,s),\\[1ex]
s_{k+1}=-\mbox{div}\(A(u)\n s_k\)+Q_k(\n u_k,\n s_k)+T_k(u_k,s_k)+Z_k(u,s).
\end{cases}
\end{equation*}
A direct computation shows
$$(u_{k+1},s_{k+1})\in L^\infty([0,T],L^2(\Om,\Real^3))\cap L^2([0,T],H^1(\Om,\Real^3)),$$
where we have used Proposition  \ref{es-F_kZ_k} and estimate \eqref{newH^2-es-v^nw^n}.

Thus, we have shown the result holds when $l=0$ and $1$. Now, we assume $l=i\geq 1$, the result has been proved, then, we need to establish it for $l=i+1$, where $i\leq k-1$.
Since $u_{k+1-i}=\p_t u_{k-i}$, it follows
\begin{equation*}
\begin{cases}
	\al \De u_{k-i}-u\times \De u_{k-i}=u_{k+1-i}-K_{k-i}(\n u_{k-i},\n s_{k-i})-L_{k-i}(s_{k-i},u_{k-i})+F_{k-i}(u,s),\\[1ex]
	\mbox{div}\(A(u)\n s_{k-i}\)=-s_{k+1-i}+Q_{k-i}(\n u_{k-i},\n s_{k-i})+T_{k-i}(u_{k-i},s_{k-i})+Z_{k-i}(u,s).
\end{cases}
\end{equation*}
Next, we consider the first equation in order to obtain the estimate of $u_{k-i}$. By utilizing the properties $\P(k)$ and Proposition \ref{es-F_kZ_k}, we have
\begin{itemize}
	\item $u_{k-i}\in L^\infty([0,T],H^{2i+1}(\Om,\Real^3))\cap L^2([0,T],H^{2i+2}(\Om,\Real^3))$,
	\item $u\in L^\infty([0,T],H^{2k+1}(\Om,\Real^3))\cap L^2([0,T],H^{2k+2})$,
	\item $F_{k-i}\in L^\infty([0,T],H^{2i+1})\cap L^2([0,T],H^{2i+2}(\Om,\Real^3))$,
	\item and by the assumption of induction,
	\[u_{k+1-i}\in L^\infty([0,T],H^{2i}(\Om,\Real^3))\cap L^2([0,T],H^{2i+1}(\Om,\Real^3)).\]
\end{itemize}

For the term $K_{k-i}$, since $\n u_{k-i}$ and $\n s_{k-i}$ are in $L^{\infty}([0,T],H^{2i}(\Om,\Real^3))\cap L^{2}([0,T],H^{2i+1}(\Om,\Real^3))$,  $u$ is in $L^{\infty}([0,T],H^{2k+1}(\Om,\Real^3))$, then
$$K_{k-i}\in L^{\infty}([0,T],H^{2i}(\Om,\Real^3))\cap L^{2}([0,T],H^{2i+1}(\Om,\Real^3)),$$
where Lemma \ref{algebra2} has been used.

Since $\De u\in L^{\infty}([0,T],H^{2i+1}(\Om,\Real^3))\cap L^{2}([0,T],H^{2i+2}(\Om,\Real^3))$ by $2k-1\geq 2i+1$, it follows
 $$u_{k-i}\times \De u\in L^{\infty}([0,T],H^{2i}(\Om,\Real^3))\cap L^{2}([0,T],H^{2i+1}(\Om,\Real^3)).$$

By an almost the same argument as that for $K_{k-i}$, we know
 $$L_{k-i}\in L^{\infty}([0,T],H^{2i}(\Om,\Real^3))\cap L^{2}([0,T],H^{2i+1}(\Om,\Real^3)),$$
since $h_d(u_{k-i})$ and $s_{k-i}$ are in $L^{\infty}([0,T],H^{2i+1}(\Om,\Real^3))\cap L^{2}([0,T],H^{2i+2}(\Om,\Real^3))$.

Now, it is not difficult to see that the above estimates imply
$$\al\De u_{k-i}-u\times \De u_{k-i}\in L^{\infty}([0,T],H^{2i}(\Om,\Real^3))\cap L^{2}([0,T],H^{2i+1}(\Om,\Real^3)).$$
Furthermore, the $L^p$-estimate of elliptic equation shows
$$u_{k-i}\in L^{\infty}([0,T],H^{2i+2}(\Om,\Real^3))\cap L^{2}([0,T],H^{2i+3}(\Om,\Real^3)).$$

On the other hand, in order to show the same estimate of $s_{k-i}$ we need to take almost the same argument as that for $u_{k-i}$ except for we need to control the following term $u_{k-i}\ast\n^2 s\ast u$. Since $u_{k-i}$, $\n^2 s$ and $u$ are all in $L^\infty([0,T],H^{2i+1}(\Om,\Real^3))$, it follows
$$u_{k-i}\ast\n^2 s\ast u\in L^{\infty}([0,T],H^{2i}(\Om,\Real^3))\cap L^{2}([0,T],H^{2i+1}(\Om,\Real^3)).$$
Note that, to control the term $Q_k$, here we need $J_e\in L^\infty([0,T], H^{2i}(\Om,\Real^3))\cap L^{2}([0,T],H^{2i+1}(\Om,\Real^3))$.

Thus, we have
$$\mbox{div}(A(u)\n s_{k-i})\in L^{\infty}([0,T],H^{2i}(\Om,\Real^3))\cap L^{2}([0,T],H^{2i+1}(\Om,\Real^3)),$$
it follows
$$s_{k-i}\in L^{\infty}([0,T],H^{2i+2}(\Om,\Real^3))\cap L^{2}([0,T],H^{2i+3}(\Om,\Real^3)).$$
Therefore, we finish the induction argument. In particular, when $l=k$, we have
$$(\p_t u,\p_t s)\in L^{\infty}([0,T],H^{2k}(\Om,\Real^3))\cap L^{2}([0,T],H^{2k+1}(\Om,\Real^3)).$$

Finally, we need to show the result when $l=k+1$. Since $(\p_t u,\p_t s)$ satisfies the following equations
\begin{equation*}
\begin{cases}
\De u=-|\n u|^2u+u\times (u\times(\tilde{h}+s))-\frac{1}{1+\al^2}\(\al \p_t u+u\times \p_t u\),\quad\quad&\text{(x,t)}\in\Om\times \Real^+,\\[1ex]
\mbox{div}(A(u)\cdot \n s)=-\p_t s+\mbox{div}(u\otimes J_e)+D_0(x)\cdot s+D_0(x)\cdot s\times u, \quad\quad&\text{(x,t)}\in\Om\times \Real^+.
\end{cases}
\end{equation*}
In view of the fact
$$(u,s)\in L^\infty([0,T],H^{2k+1}(\Om,\Real^3))\cap L^2([0,T],H^{2k+2}(\Om,\Real^3)),$$
we take a bootstrap argument to show
$$(\De u, \De s)\in L^\infty([0,T],H^{2k}(\Om,\Real^3))\cap L^2([0,T],H^{2k+1}(\Om,\Real^3)).$$
Hence, it implies
$$(u,s)\in L^\infty([0,T],H^{2k+2}(\Om,\Real^3))\cap L^2([0,T],H^{2k+3}(\Om,\Real^3)),$$
by $L^p$-estimates.
\end{proof}

Now, assume that $(u_0,s_0)\in H^{2(k+1)+1}(\Om,\Real^3)$. We want to show the item (2) in the property $\P(k+1)$. Here, we will only give the sketches of proof to this property, since the proof goes almost the same as that in Section \ref{ss: 1-reg} and the proof of Proposition \ref{ve-reg-us}. First of all, we prove the following result, which is analogous to Theorem \ref{mth2}.

\begin{prop}\label{v-reg-us}
 If $(u_0,s_0)\in H^{2(k+1)+1}(\Om,\Real^3)$ and the property $\P(k)$ holds, then
\begin{equation}
 (u_{k+1},s_{k+1})\in L^\infty([0,T],H^1(\Om,\Real^3))\cap L^2([0,T], H^2(\Om,\Real^3)),
 \end{equation}
 for any $0<T<T^*$.
 \end{prop}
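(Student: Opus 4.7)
The plan is to adapt the $k=0$ argument of Subsection \ref{ss: 1-reg} (the proof of Theorem \ref{mth2}) to the linear auxiliary system \eqref{comp-llgsp2} for $(v,w)=(u_k,s_k)$. The Galerkin approximation $(v^n,w^n)$ coming from \eqref{app-comp-llgsp1} already satisfies the uniform $H^2$-bound \eqref{newH^2-es-v^nw^n}. To produce a uniform $L^\infty([0,T],H^1)\cap L^2([0,T],H^2)$ bound for $(\p_t v^n,\p_t w^n)$, I would differentiate \eqref{app-comp-llgsp1} once in $t$. This yields a linear evolution for $(\p_t v^n,\p_t w^n)$ whose principal parts are $\al\De\p_t v^n$ and $-\mbox{div}(A(u)\n\p_t w^n)$, together with ``coefficient-derivative'' contributions $-\p_t u\times\De v^n$, $-\mbox{div}(\p_t A(u)\n w^n)$ and the time-derivatives of $K_k,L_k,Q_k,T_k$, plus forcing $(\p_t F_k,\p_t Z_k)$.

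Next I would test this differentiated Galerkin system against $(-\De\p_t v^n,-\De\p_t w^n)$, exactly mirroring the step that produced \eqref{es-u_ts_t} in the base case. After absorbing the highest-order dissipation on the left I expect an inequality of the shape
\begin{equation*}
\frac{d}{dt}\(\|\n\p_t v^n\|_{L^2}^2+\|\n\p_t w^n\|_{L^2}^2\)+\al\|\De\p_t v^n\|_{L^2}^2+(1-(1+\de)\th)c_0\|\De\p_t w^n\|_{L^2}^2\leq f(t)\(1+\|\n\p_t v^n\|_{L^2}^2+\|\n\p_t w^n\|_{L^2}^2\)+g(t),
\end{equation*}
with $f,g\in L^1([0,T])$ for any $T<T^*$. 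Integrability of $f$ follows from $\P(k)$ combined with the algebra property Lemma \ref{algebra2}: every coefficient-derivative term factors as a product of some $u_j,s_j$ with $j\leq k$, controlled in $L^\infty([0,T],H^{2k-2j+1})\cap L^2([0,T],H^{2k-2j+2})$ by $\P(k)$, against a factor involving $\n v^n$ or $\n w^n$ that is controlled in $L^\infty([0,T],H^1)$ via \eqref{newH^2-es-v^nw^n}. Integrability of $g$ is precisely the remark after Proposition \ref{es-F_kZ_k} applied with $i=k$, using the hypothesis on $\p_t^{k+1}J_e$ built into Theorem \ref{mth3}.

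For the initial data, evaluating \eqref{app-comp-llgsp1} at $t=0$ identifies $\p_t v^n(0)=P_n V_{k+1}(u_0,s_0)$ and $\p_t w^n(0)=P_n W_{k+1}(u_0,s_0)$ in the notation of Subsection \ref{ss: comp-cond}. Applying Lemma \ref{algebra2} to the explicit polynomial expressions for $V_{k+1},W_{k+1}$ shows they lie in $H^1(\Om,\Real^3)$ with norm bounded by a multiple of $\|u_0\|_{H^{2k+3}}+\|s_0\|_{H^{2k+3}}$, so by Lemma \ref{es-P_n}(1) the initial data of the differentiated Galerkin system is uniformly bounded in $H^1$. Gronwall's inequality then gives the claimed uniform bound. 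The standard weak-$*$/Aubin--Simon passage to the limit (as in Theorem \ref{reg-solu}), combined with the uniqueness argument preceding Proposition \ref{ve-reg-us}, identifies the limit with $(\p_t v,\p_t w)=(u_{k+1},s_{k+1})$ in $L^\infty([0,T],H^1)\cap L^2([0,T],H^2)$.

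The main obstacle I anticipate is the bookkeeping of the coefficient-derivative terms after $t$-differentiation. Each of $\p_t K_k,\p_t L_k,\p_t Q_k,\p_t T_k$ is a differential polynomial in $u_j,s_j,\n u_j,\n s_j$ for $0\leq j\leq k+1$, and to justify the energy estimate one must verify that, after pairing with $-\De\p_t v^n$ or $-\De\p_t w^n$ and integrating by parts where necessary to avoid testing against $\n\De\p_t v^n$ or $\n\De\p_t w^n$ (which are only in $L^2$ in time), every such term can be absorbed into either $f(t)(\|\n\p_t v^n\|_{L^2}^2+\|\n\p_t w^n\|_{L^2}^2)$ or a pure forcing contribution in $g(t)$. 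This is exactly where $\P(k)$ furnishes just enough regularity of the lower-order time derivatives to close the argument; the assumption $(u_0,s_0)\in H^{2k+3}$ is needed both for the initial-data bound and, through $\P(k)$, for the required $L^\infty([0,T],H^{2k+1})$ regularity of $(u,s)$.
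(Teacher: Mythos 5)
Your proposal follows essentially the same route as the paper: differentiate the Galerkin system \eqref{app-comp-llgsp1} for the auxiliary linear problem \eqref{comp-llgsp2} in time, test against $(-\De \p_t v^n,-\De \p_t w^n)$, use $\P(k)$ together with Proposition \ref{es-F_kZ_k} and the remark following it to put the coefficient-derivative and forcing terms into $L^1$ in time, close with Gronwall, and pass to the limit via Aubin--Simon and the uniqueness of solutions to \eqref{comp-llgsp2}. One correction on the initial data: evaluating \eqref{app-comp-llgsp1} at $t=0$ does \emph{not} give $\p_t v^n(0)=P_n V_{k+1}(u_0,s_0)$, because $P_n$ does not commute with the nonlinear expressions --- the right-hand side is evaluated at $(P_n V_k,P_n W_k)$, so $\p_t v^n(0)$ contains terms like $\al\De P_n(V_k)$. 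Consequently Lemma \ref{es-P_n}(1) is not enough; you must bound $\norm{\n \p_t v^n(0)}_{L^2}$ by $\norm{P_n(V_k)}_{H^3}+\norm{P_n(W_k)}_{H^3}$ and then invoke the $H^3$-stability of the projection, Lemma \ref{es-P_n}(3), together with $\norm{(V_k,W_k)}_{H^3}\leq C(\norm{u_0}_{H^{2k+3}},\norm{s_0}_{H^{2k+3}})$, which is exactly how the paper concludes and is the reason the hypothesis $(u_0,s_0)\in H^{2(k+1)+1}$ enters.
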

\begin{proof}
By the Galerkin approximation equation \eqref{app-comp-llgsp1} associated to \eqref{comp-llgsp2}, $(v^n_t,w^n_t)=(\p_t v^n, \p_t w^n)$ satisfies the following equation
\begin{equation}\label{app-comp-llgsp2}
\begin{cases}
\p_t v^n_t=\al\De v^n_t+ P_n\p_t\(-u\times \De v^n+K_k(\n v^n,\n w^n)+L_k(v^n,w^n)+F_k\),&\text{(x,t)}\in\Om\times [0,T^*),\\[1ex]
\p_t w^n_t=P_n\p_t\(-\mbox{div}\(A(u)\n w^n\)+Q_k(\n v^n,\n w^n)+T_k(v^n,w^n)+Z_k\),&\text{(x,t)}\in\Om\times [0,T^*).
\end{cases}
\end{equation}
By the assumption of $\P(k+1)$ and the previous induction arguments, we can combine the estimates in Proposition \ref{es-F_kZ_k}, Proposition \ref{ve-reg-us} and estimate \eqref{newH^2-es-v^nw^n} to obtain
\begin{itemize}
	\item $(v^n,w^n)\in L^\infty([0,T], H^2(\Om,\Real^3))\cap L^2([0,T],H^3(\Om,\Real^3))$,
	\item $(\p^i_tu,\p^i_ts)\in L^\infty([0,T], H^{2k+2-2i}(\Om,\Real^3))\cap L^2([0,T],H^{2k+3-2i}(\Om,\Real^3))$, where $i\in \{0,\dots,k+1\}$,
	\item $(F_k,Z_k)\in L^\infty([0,T], H^{1}(\Om,\Real^3))\cap L^2([0,T],H^{2}(\Om,\Real^3))$,
	\item $(\p_tF_k,\p_tZ_k)\in L^2([0,T],L^2(\Om,\Real^3))$.
\end{itemize}

In the following context, we aim at proving $$(v^n_t, w^n_t)\in L^\infty([0,T], H^1(\Om,\Real^3))\cap L^2([0,T],H^2(\Om,\Real^3)).$$
From equation \eqref{app-comp-llgsp1} and the estimate of $(v^n,w^n)$, we can get easily
$$\norm{(v^n_t,w^n_t)}_{L^\infty([0,T],L^2(\Om))}+\norm{(v^n_t,w^n_t)}_{L^2([0,T],H^1(\Om))}\leq C(T).$$
By choosing $(-\De v^n_t,-\De w^n_t)$ as a test function, we can show the $H^2$-estimate as follows.
\begin{equation}\label{es-v^n_t}
\begin{aligned}
&\quad \frac{\p}{\p t}\int_{\Om}|\n v^n_t|^2dx+\al \int_{\Om}|\De v^n_t|^2dx\\
&\leq C(\al)\(\int_{\Om}|K_k(\n v^n_t, \n w^n_t)|^2dx+\int_{\Om}|L_k(v^n_t, w^n_t)|^2dx+\int_{\Om}|\tilde{F}_k|^2dx\)\\
&=C(\al)(I_1+II_1+III_1).
\end{aligned}
\end{equation}
\begin{equation}\label{es-w^n_t}
\begin{aligned}
&\frac{\p}{\p t}\int_{\Om}|\n w^n_t|^2dx+(1-\th) \int_{\Om}D_0|\De w^n_t|^2dx\\
\leq& C(c_0,\th, \norm{D_0}_{C^1})\(\int_{\Om}|Q_k(\n v^n_t, \n w^n_t)|^2dx+\int_{\Om}|T_k(v^n_t, w^n_t)|^2dx\)\\
&+C(c_0,\th, \norm{D_0}_{C^1})\(\int_{\Om}|\tilde{Z}_k|^2dx+\int_{\Om}|\n w^n_t|^2dx+\int_{\Om}|\n w^n_t|^2|\n u|^2dx\)\\
=&C(c_0,\th, \norm{D_0}_{C^1})(I_2+II_2+III_2+IV_2+V_2)
\end{aligned}
\end{equation}
where
\begin{align*}
\tilde{F_k}=&-u_t\times\De v^n+\p_t K_k(\n v^n, \n w^n)+\p_tL_k(v^n,w^n)+\p_t F_{k},\\
\tilde{Z_k}=&-\mbox{div}\(\p_tA(u)\n w^n\)+\p_t Q_k(\n v^n, \n w^n)+\p_tT_k(v^n,w^n)+\p_t Z_{k}.
\end{align*}

By a direct computation, we get the below estimates.
\begin{equation*}
\begin{aligned}
I_1\leq &C\int_{\Om}|\n v^n_t|^2|\n u|^2\,dx\leq C\norm{u}^2_{H^3}\int_{\Om}|\n v^n_t|^2dx,\\
II_1\leq& 2\int_{\Om}|v^n_t|^2|\n u|^4\,dx+\int_{\Om}|v^n_t|^2(|\tilde{h}|^2+|s|^2)dx\\
&+ C\int_{\Om}(|h_d(v^n_t)|^2+|v_t|^2+|w^n_t|^2)dx+\int_{\Om}|v^n|^2(|h(u)|^2+|s|^2)dx\\
\leq &C(\norm{u}^4_{H^3(\Om)}+\norm{s}^2_{H^2(\Om)}+1)\int_{\Om}|v^n_t|^2+C\int_{\Om}|w^n_t|^2dx+C\norm{u}^2_{H^3}\norm{v^n_t}_{H^1}\\
\leq& C(T)+C(T)\int_{\Om}|\n v^n_t|^2\,dx,\\
III_1\leq& \int_{\Om}|\p_t K_k(\n v^n, \n w^n)|^2\,dx+\int_{\Om}|\p_tL_k(v^n,w^n)|^2\,dx+\int_{\Om}|\p_t F_{k}|^2dx\\
&+\int_{\Om}|u_t|^2|\De v^n|^2\,dx\leq \norm{v^n}^2_{H^2(\Om)}\norm{u_t}^2_{H^2(\Om)}+\norm{u}^2_{H^3(\Om)}\norm{\p_t u}^2_{H^2(\Om)}\int_{\Om}|\n v^n|^2\,dx\\
&+\int_{\Om}|\p_tL_k(v^n,w^n)|^2\,dx+\int_{\Om}|\p_t F_{k}|^2dx+\norm{u_t}^2_{H^2(\Om)}\int_{\Om}|\De v^n|^2\,dx\\
\leq &C(T)+\int_{\Om}|\p_t F_{k}|^2dx.
\end{aligned}
\end{equation*}
Here,
\begin{equation*}
\begin{aligned}
\int_{\Om}|\p_tL_k(v^n,w^n)|^2\,dx\leq &C\norm{u}^2_{H^2(\Om)}\norm{v^n}^2_{H^2(\Om)}\norm{u_t}_{H^2(\Om)}+C\norm{v^n}_{H^2(\Om)}\norm{u_t}^2_{H^2(\Om)}(1+\norm{s}^2_{L^2(\Om)})\\
&+C\norm{v^n}^2_{H^2(\Om)}(\norm{u_t}^2_{L^2(\Om)}+\norm{s_t}^2_{L^2(\Om)})\\
&+C\norm{u_t}^2_{L^2(\Om)}(\norm{v^n}^2_{H^2(\Om)}+\norm{w^n}^2_{H^2(\Om)})+\norm{v^n}^2_{H^2(\Om)}\int_{\Om}|\De u_t|^2\,dx\\
\leq & C(T).
\end{aligned}
\end{equation*}

Thus, there holds
\begin{equation*}
\begin{aligned}
&\quad \frac{\p}{\p_t}\int_{\Om}|\n v^n_t|^2dx+\al \int_{\Om}|\De v^n_t|^2dx\\
&\leq C(T)+C(T)\int_{\Om}|\p_t F_k|^2dx+C(T)\int_{\Om}|\n v^n_t|^2dx.
\end{aligned}
\end{equation*}
By Gronwall inequality, it follows
$$\sup_{0< t\leq T}\norm{v^n_t}^2_{H^1(\Om)}+\int_{0}^{T}\norm{v^n_t}^2_{H^2(\Om)}\,dt\leq C(T, \norm{v^n_t|_{t=0}}^2_{H^1(\Om)}).$$
Here we have used the fact
$$\int_{\Om}|\p_t F_k|^2dx+\int_{\Om}|\n v^n_t|^2dx\in L^1([0,T]).$$

Now, we turn to showing the $H^2$-estimate of $w^n_t$. We need to control the terms in the right hand side of inequality \eqref{es-w^n_t} as follows.
\begin{equation*}
\begin{aligned}
I_2\leq &C\norm{J_e}^2_{H^2(\Om)}\norm{v^n_t}^2_{H^{1}(\Om)}\leq C(T,\norm{v^n_t|_{t=0}}^2_{H^1}),\\
II_2\leq& C(\norm{D_0}_{C^1},c_0,\th)\norm{s}^2_{H^3(\Om)}\norm{ v^n_t}^2_{H^{1}(\Om)}\\
&+C(\norm{D_0}_{C^1},c_0,\th)\left(\norm{s}^2_{H^3(\Om)}\norm{u}^2_{H^3(\Om)}\norm{v^n_t}^2_{L^{2}(\Om)}+\int_{\Om}|w^n_t|^2dx+\norm{s}^2_{H^2}\int_{\Om}|v^n_t|^2dx\right)\\
\leq &C(T),\\
III_2\leq &\int_{\Om}|\p_t Q_k(\n v^n,\n w^n)|^2\,dx+\int_{\Om}|\p_t T_k(v^n,w^n)|^2\,dx+\int_{\Om}|\p_t Z_k|^2\,dx\\
\leq &C\norm{\p_t J_e}^2_{H^1(\Om)}\norm{v^n}^2_{H^2(\Om)}+\int_{\Om}|\p_t Z_k|^2\,dx+C(T).
\end{aligned}
\end{equation*}
Here, we have used the below estimates
\begin{equation*}
\begin{aligned}
\int_{\Om}|\p_t T_k(v^n,w^n)|^2\,dx
\leq &C(\norm{D_0}_{C^1})\left(\norm{v^n}^2_{H^2(\Om)}\int_{\Om}|\n s_t|^2\,dx+\norm{u_t}^2_{H^2(\Om)}\norm{v^n}^2_{H^2}\int_{\Om}|\n s|^2\,dx\right)\\
&+C(\norm{D_0}_{C^1})\left(\norm{s}^2_{H^3(\Om)}\int_{\Om}|\p_t v^n|^2\,dx+\norm{v^n}^2_{H^2(\Om)}\int_{\Om}|\n s|^2\,dx\right)\\
&+C(\norm{D_0}_{C^1})\left(\norm{u_t}^2_{H^2(\Om)}\int_{\Om}|w^n|^2\,dx+\norm{s_t}^2_{H^2(\Om)}\int_{\Om}|v^n|^2\,dx\right)\\
\leq &C(T),
\end{aligned}
\end{equation*}
and
\begin{equation*}
\begin{aligned}
&\int_{\Om}|\mbox{div}(D_0\th u_t\otimes u\cdot \n w^n)|^2\,dx\\
\leq& C(\norm{D_0}_{C^1})\left(\norm{u_t}^2_{H^2(\Om)}\norm{w^n}^2_{H^2(\Om)}+\norm{u_t}^2_{H^2(\Om)}\norm{u}^2_{H^3(\Om)}\int_{\Om}|\n w^n|^2\,dx\right)\leq C(T).
\end{aligned}
\end{equation*}
Therefore, we get
\begin{equation*}
\begin{aligned}
&\frac{\p}{\p_t}\int_{\Om}|\n w^n_t|^2dx+(1-\th) \int_{\Om}D_0|\De w^n_t|^2dx\\
\leq& C(T)+C(T)\int_{\Om}|\p_t Z_k|^2\,dx+C(T)\int_{\Om}|\n w^n_t|^2dx.
\end{aligned}
\end{equation*}
Hence, the Gronwall inequality implies
$$\sup_{0< t\leq T}\norm{w^n_t}^2_{H^1(\Om)}+\int_{0}^{T}\norm{w^n_t}^2_{H^2(\Om)}\,dt\leq C(T, \norm{v^n_t|_{t=0}}^2_{H^1(\Om)},\norm{w^n_t|_{t=0}}^2_{H^1(\Om)}).$$

Finally, we need to show the assumption of initial date $(u_0,s_0)$ can guarantee the boundness of $\norm{\n v^n_t}_{L^2}+\norm{\n w^n_t}_{L^2}$, hence the proof is completed. Since the initial date of \eqref{app-comp-llgsp2} satisfies
\begin{equation*}
\begin{cases}
v^n_t(x,0)=\al \De P_n (V_k)+P_n(-u_0\times P_n(V_k)+K_k|_{t=0}(\n P_n(V_k),\n P_n(W_k))\\
\quad \quad\quad\quad\quad+L_k|_{t=0}(P_n(V_k),P_n(W_k))+F_k|_{t=0}),\\[2ex]
w^n_t(x,0)=P_n(-\mbox{div}(A(u_0)\n P_n(W_k))+Q_k|_{t=0}(\n P_n(V_k),\n P_n(W_k))\\
\quad\quad\quad\quad\quad+T_k|_{t=0}(P_n(V_k),P_n(W_k))+Z_k|_{t=0}),
\end{cases}
\end{equation*}
a simple calculation shows
$$\int_{\Om}|\n v^n_t|_{t=0}|^2\,dx\leq C(\norm{P_n(V_k)}^2_{H^3(\Om)}+\norm{P_n(W_k)}^2_{H^3(\Om)}+1),$$
and
$$\int_{\Om}|\n w^n_t|_{t=0}|^2\,dx\leq C(\norm{P_n(V_k)}^2_{H^3(\Om)}+\norm{P_n(W_k)}^2_{H^3(\Om)}+1).$$
Thus, there holds
\begin{equation*}
\begin{aligned}
\int_{\Om}|\n v^n_t|_{t=0}|^2\,dx+\int_{\Om}|\n w^n_t|_{t=0}|^2\,dx&\leq C(\norm{V_k}^2_{H^3(\Om)}+\norm{W_k}^2_{H^3(\Om)}+1)\\
&\leq C(\norm{u_0}_{H^{2k+3}(\Om)},\norm{s_0}_{H^{2k+3}(\Om)}).
\end{aligned}
\end{equation*}
Here, we have applied the formula of $V_k$ and Lemma \ref{es-P_n} about estimate of $P_n$.
\end{proof}

By summarizing the estimates in Propostion \ref{ve-reg-us} and Propostion \ref{v-reg-us}, and taking an almost the same argument as in the proof of Proposition \ref{ve-reg-us}, we can prove
\begin{equation}\label{vvreg-us}
(\p^i_t u,\p^i_t s)\in L^\infty([0,T], H^{2(k+1)-2i+1}(\Om,\Real^3))\cap L^2([0,T], H^{2(k+1)-2i+2}(\Om,\Real^3)),
\end{equation}
where $0<T<T^*$ and $i\in \{0,\dots,k+1\}$. Hence, the property $\P(k+1)$ is established.

\medskip\medskip
\noindent {\it\textbf{Acknowledgements}}: The authors are supported partially by NSFC grant (No.11731001). The author Y. Wang is supported partially by NSFC grant (No.11971400) and Guangdong Basic and Applied Basic Research Foundation Grant (No. 2020A1515011019).
\vspace{1cm}

\end{document}